\theoremstyle{definition}
\newtheorem{definition}{Definition}[section]
\newtheorem{theorem}{Theorem}[section]
\newtheorem{proposition}{Proposition}[section]
\newtheorem{corollary}{Corollary}[section]
\newtheorem{lemma}{Lemma}[section]
\newtheorem{example}{Example}[section]
\newtheorem{conjecture}{Conjecture}[section]
\title{The Arens-Michael envelopes of Laurent Ore extensions}
\author{Petr Kosenko}
\newcommand{\Addresses}{{
		\bigskip
		\footnotesize
		
		P.~Kosenko,	\textit{E-mail:} \texttt{prkosenko@edu.hse.ru}, \texttt{petr.kosenko@mail.utoronto.ca}
}}
\newcommand{\verteq}{\rotatebox{90}{$\,=$}}
\newcommand{\equalto}[2]{\underset{\scriptstyle\overset{\mkern4mu\verteq}{#2}}{#1}}
\begin{document}
\maketitle

\begin{abstract}
For an Arens-Michael algebra $A$ we consider a class of $A$-$\hat{\otimes}$-bimodules which are invertible with respect to the projective bimodule tensor product. We call such bimodules topologically invertible over $A$. Given a Fr\'echet-Arens-Michael algebra $A$ and an topologically invertible Fr\'echet $A$-$\hat{\otimes}$-bimodule $M$, we construct an Arens-Michael algebra $\widehat{L}_A(M)$ which serves as a topological version of the Laurent tensor algebra $L_A(M)$.

Also, for a fixed algebra $B$ we provide a condition on an invertible $B$-bimodule $N$ sufficient for the Arens-Michael envelope of $L_B(N)$ to be isomorphic to $\widehat{L}_{\widehat{B}}(\widehat{N})$. In particular, we prove that the Arens-Michael envelope of an invertible Ore extension $A[x, x^{-1}; \alpha]$ is isomorphic to $\widehat{L}_{\widehat{A}}(\widehat{A}_{\widehat{\alpha}})$ provided that the Arens-Michael envelope of $A$ is metrizable.
\end{abstract}

\section*{Introduction}

We'd like to begin the paper by demonstrating the connection between the Arens-Michael envelopes and noncommutative geometry.

Noncommutative geometry is a branch of mathematics which, in particular, arose from such fundamental results as the first Gelfand-Naimark theorem or the Nullstellensatz theorem, or, more precisely, their categorical interpretations:

\begin{theorem}[the first Gelfand-Naimark theorem]
	Denote the category of commutative unital $C^*$-algebras by $\mathbf{CUC}^*$ and the category of compact Hausdorff topological spaces by $\mathbf{Comp}$. Then a pair of functors $\mathcal{F} : \mathbf{Comp} \rightarrow \mathbf{CUC}^*$ and $\mathcal{G} : \mathbf{CUC}^* \rightarrow \mathbf{Comp}$, where $\mathcal{F}(X) = C(X)$ and $\mathcal{G}(A) = \text{Spec}_m(A)$, is an anti-equivalence of categories.
\end{theorem}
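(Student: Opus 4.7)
The plan is to establish the anti-equivalence by constructing natural isomorphisms between the identity functors and the compositions $\mathcal{F} \circ \mathcal{G}$ and $\mathcal{G} \circ \mathcal{F}$. First I would verify that both $\mathcal{F}$ and $\mathcal{G}$ are well-defined contravariant functors: $\mathcal{F}$ sends a continuous map $f \colon X \to Y$ to the pullback $f^* \colon C(Y) \to C(X),\ g \mapsto g \circ f$, which is a unital $*$-homomorphism, and $\mathcal{G}$ sends a unital $*$-homomorphism $\varphi \colon A \to B$ to the induced map on maximal spectra (equivalently, on character spaces).

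The main technical work is to topologize $\text{Spec}_m(A)$ and establish the Gelfand representation. I would identify $\text{Spec}_m(A)$ with the set of characters $\text{Hom}(A, \mathbb{C})$ via the Gelfand--Mazur theorem---any quotient of a commutative unital Banach algebra by a maximal ideal is a Banach field, hence isomorphic to $\mathbb{C}$---and equip it with the weak-$*$ topology inherited from $A^*$. Since characters on a unital Banach algebra are automatically continuous of norm at most one, Banach--Alaoglu yields compactness, while Hausdorffness is immediate. I would then define the counit $\varepsilon_A \colon A \to C(\text{Spec}_m(A))$ by the Gelfand transform $a \mapsto \hat{a}$, $\hat{a}(\chi) = \chi(a)$.

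The hardest step, and the one most dependent on the $C^*$-axioms rather than the general Banach algebra setting, is to prove that $\varepsilon_A$ is an isometric $*$-isomorphism. Here I would invoke the $C^*$-identity $\|a^*a\| = \|a\|^2$ together with the spectral radius formula to conclude $\|\hat{a}\|_\infty = \|a\|$ for all self-adjoint $a$, and then for all $a \in A$ by polarization. Surjectivity then follows from the Stone--Weierstrass theorem applied to $\varepsilon_A(A)$, which is a closed unital $*$-subalgebra of $C(\text{Spec}_m(A))$ that separates points. The fact that $\varepsilon_A$ respects involution is a consequence of every character on a commutative $C^*$-algebra sending self-adjoint elements to real numbers, which itself falls out of the spectral analysis.

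Finally, I would construct the unit $\eta_X \colon X \to \text{Spec}_m(C(X))$ by evaluation, $x \mapsto \text{ev}_x$, and verify it is a homeomorphism. Injectivity uses Urysohn's lemma to separate points by continuous functions; surjectivity uses compactness of $X$ together with the observation that the vanishing locus of the kernel of any character must be nonempty; and continuity of $\eta_X^{-1}$ is automatic from compactness of $X$ and Hausdorffness of the spectrum. Naturality of $\eta$ and $\varepsilon$ is a routine diagram chase from the definitions of the pullback maps. The genuine obstacle is the spectral-theoretic input---Gelfand--Mazur and the $C^*$-identity---without which neither the identification of maximal ideals with characters nor the isometric property of the Gelfand transform can be pushed through.
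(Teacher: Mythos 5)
The paper states this theorem purely as classical background in the introduction and offers no proof of its own, so there is nothing to compare against; your outline is the standard Gelfand--Naimark argument and is essentially correct. One small refinement: the passage from self-adjoint elements to general $a$ is usually not by polarization but by the identity $\|a\|^2 = \|a^*a\| = \|\widehat{a^*a}\|_\infty = \|\hat{a}\|_\infty^2$, which uses that the Gelfand transform is already known to be a $*$-homomorphism; with that adjustment every step you list (Gelfand--Mazur, Banach--Alaoglu, the $C^*$-identity with the spectral radius formula, Stone--Weierstrass, Urysohn, and the compact-to-Hausdorff bijection argument) goes through as stated.
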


\begin{theorem}[Nullstellensatz]
	Let $\mathbb{K}$ be an algebraically closed field. Denote the category of affine algebraic $\mathbb{K}$-varieties by $\mathbf{Aff}$ and the category of commutative finitely generated reduced unital $\mathbb{K}$-algebras by $\mathbf{Alg}$. Then a pair of functors $\mathcal{F} : \mathbf{Aff} \rightarrow \mathbf{Alg}$ and $\mathcal{G} : \mathbf{Alg} \rightarrow \mathbf{Aff}$, where $\mathcal{F}(X) = \mathbb{K}[X]$ and $\mathcal{G}(A) = \text{Spec}_m(A)$, is an anti-equivalence of categories.
\end{theorem}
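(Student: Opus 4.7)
The plan is to establish the anti-equivalence by constructing natural isomorphisms $\mathcal{G} \circ \mathcal{F} \cong \mathrm{id}_{\mathbf{Aff}}$ and $\mathcal{F} \circ \mathcal{G} \cong \mathrm{id}_{\mathbf{Alg}}$, with Hilbert's Nullstellensatz doing the heavy lifting at both ends. First I would verify well-definedness on objects: for an affine variety $X \subseteq \mathbb{K}^n$, the coordinate ring $\mathbb{K}[X] = \mathbb{K}[x_1, \ldots, x_n]/I(X)$ is finitely generated, commutative, unital, and reduced since $I(X)$ is radical by definition. Conversely, for a finitely generated reduced $A$, fix a presentation $A \cong \mathbb{K}[x_1, \ldots, x_n]/J$ with $J$ radical, and define $\mathrm{Spec}_m(A)$ as the zero locus $V(J) \subseteq \mathbb{K}^n$ with its Zariski topology; one must check this is independent of presentation, which follows from the intrinsic description $\mathrm{Spec}_m(A) = \{\mathfrak{m} \subset A : \mathfrak{m}\text{ maximal}\}$.

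Next I would handle morphisms. A morphism $\varphi : X \to Y$ of varieties pulls back regular functions to a unital $\mathbb{K}$-algebra homomorphism $\varphi^\ast : \mathbb{K}[Y] \to \mathbb{K}[X]$, giving $\mathcal{F}$ on arrows. For $\mathcal{G}$ on arrows, a homomorphism $f : A \to B$ induces the contravariant map $\mathfrak{m} \mapsto f^{-1}(\mathfrak{m})$; here one uses that $B/\mathfrak{m}$ is finite over $\mathbb{K}$ by the strong Nullstellensatz, so $A/f^{-1}(\mathfrak{m})$ embeds into a field finite over $\mathbb{K}$ and is therefore itself a field, ensuring $f^{-1}(\mathfrak{m})$ is maximal.

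The unit $\eta_X : X \to \mathrm{Spec}_m(\mathbb{K}[X])$ sends $p$ to the evaluation ideal $\mathfrak{m}_p = \{f \in \mathbb{K}[X] : f(p) = 0\}$. Bijectivity is exactly the weak Nullstellensatz, and continuity in both directions follows by identifying basic open sets. The counit $\varepsilon_A : A \to \mathbb{K}[\mathrm{Spec}_m(A)]$ sends $a \in A$ to the function $\mathfrak{m} \mapsto a \bmod \mathfrak{m}$. Injectivity uses that $A$ is reduced and Jacobson, so $\bigcap_\mathfrak{m} \mathfrak{m} = \sqrt{0} = 0$; surjectivity uses the given presentation to exhibit every regular function as a polynomial. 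Naturality is then a diagram chase.

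The main obstacle is the Nullstellensatz itself: in its weak form, every maximal ideal of $\mathbb{K}[x_1, \ldots, x_n]$ is of the form $(x_1 - a_1, \ldots, x_n - a_n)$; equivalently, every finitely generated $\mathbb{K}$-algebra which is a field must equal $\mathbb{K}$ when $\mathbb{K}$ is algebraically closed. This is typically reduced via Zariski's lemma or the Noether normalization lemma to a statement about algebraic dependence, and the algebraic closedness of $\mathbb{K}$ is essential --- it fails dramatically over $\mathbb{R}$, for instance. A secondary technical point is the Jacobson property of finitely generated reduced $\mathbb{K}$-algebras, needed for injectivity of $\varepsilon_A$, which again is a consequence of the strong Nullstellensatz.
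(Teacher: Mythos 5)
Your proposal is correct: it is the standard textbook derivation of the categorical form of the Nullstellensatz, with the weak Nullstellensatz giving bijectivity of the unit $X \to \mathrm{Spec}_m(\mathbb{K}[X])$, Zariski's lemma guaranteeing that preimages of maximal ideals under $\mathbb{K}$-algebra maps are maximal, and the Jacobson property (equivalently the strong Nullstellensatz) giving injectivity of the counit on reduced algebras. Note, however, that the paper offers no proof of this statement at all -- it appears in the introduction purely as classical background motivating the noncommutative-geometry viewpoint -- so there is no argument in the text to compare yours against; your sketch is sound and complete at the level of detail one would expect for this result.
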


As the reader can see, these theorems state that some category of geometrical objects is anti-equivalent to the category of functions on them. This observation, for example, serves as a motivation to think of noncommutative $C^*$-algebras as the function spaces of ``noncommutative topological spaces''.

The notion of Arens-Michael envelopes was discovered by J. Taylor in \cite{taylor1972general} due to the problem of multi-operator functional calculus existence. It is worth noting that the terminology the author used was different from that we use nowadays: Taylor defined them as ``completed locally submultiplicative convex envelopes''. The current terminology is due to A. Helemskii, see \cite{Helem1993}.

The following theorem serves as a motivation to study Arens-Michael envelopes in the context of noncommutative geometry.

\begin{theorem}
	\label{holomorphicfunctions}
	The Arens-Michael envelope of $\mathbb{C}[t_1, \dots, t_n]$ is topologically isomorphic to the algebra of holomorphic functions $\mathcal{O}(\mathbb{C}^n)$ endowed with the compact-open topology.
\end{theorem}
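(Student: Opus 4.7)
The plan is to verify directly that $\mathcal{O}(\mathbb{C}^n)$ with the compact-open topology satisfies the universal property of the Arens-Michael envelope of $\mathbb{C}[t_1,\dots,t_n]$. Concretely, I would exhibit a continuous homomorphism $\iota : \mathbb{C}[t_1,\dots,t_n] \to \mathcal{O}(\mathbb{C}^n)$ with dense image --- the tautological inclusion --- confirm that the target is itself an Arens-Michael algebra, and then show that any homomorphism from $\mathbb{C}[t_1,\dots,t_n]$ into an Arens-Michael algebra $B$ extends uniquely and continuously along $\iota$.

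The first two structural steps are routine. A cofinal family of submultiplicative seminorms defining the compact-open topology on $\mathcal{O}(\mathbb{C}^n)$ is given by $p_k(f) := \sup_{|z_j| \le k} |f(z)|$ for $k \in \mathbb{N}$; each is plainly submultiplicative, so $\mathcal{O}(\mathbb{C}^n)$ is Fr\'echet-Arens-Michael. Density of polynomials follows immediately from the fact that the partial sums of the Taylor expansion of an entire function converge uniformly on every compact polydisc.

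The heart of the argument is the universal property. Given $\varphi : \mathbb{C}[t_1,\dots,t_n] \to B$, set $b_j := \varphi(t_j)$; these commute in $B$ because they come from a commutative algebra, so for $f = \sum_\alpha c_\alpha z^\alpha$ the expression $\hat{\varphi}(f) := \sum_\alpha c_\alpha b^\alpha$ is unambiguous as a formal series. To prove convergence and continuity simultaneously, I would fix an arbitrary continuous submultiplicative seminorm $\|\cdot\|$ on $B$, choose any $R > \max_j \|b_j\|$, and invoke the multivariable Cauchy inequalities $|c_\alpha| \le p_{2R}(f)\,(2R)^{-|\alpha|}$ to obtain
\[
\sum_\alpha \|c_\alpha b^\alpha\| \;\le\; p_{2R}(f) \sum_\alpha 2^{-|\alpha|} \;=\; 2^n \, p_{2R}(f).
\]
This produces a well-defined element in the Banach-algebra completion of $B$ at $\|\cdot\|$ and bounds $\|\hat{\varphi}(f)\|$ by a single continuous seminorm on $\mathcal{O}(\mathbb{C}^n)$; uniqueness of the extension is then forced by density.

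The main obstacle, and the conceptual pivot of the argument, is exactly this last estimate: one must control the substitution map $f \mapsto f(b)$ uniformly across every submultiplicative seminorm that may appear on $B$. The Cauchy inequalities are precisely the device that converts a growth bound on the coefficients of $f$ (encoded in $p_{2R}$) into a seminorm estimate compatible with any Banach quotient of $B$, which is what allows the substitution maps indexed by seminorms of $B$ to assemble into a single continuous homomorphism out of $\mathcal{O}(\mathbb{C}^n)$.
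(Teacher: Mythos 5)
Your proposal is correct, but there is nothing in the paper to compare it against: Theorem \ref{holomorphicfunctions} is quoted in the introduction as a known result of J.~Taylor, with the proof deferred to the literature (the paper points to \cite{PirkAM} for this and for the generalization to arbitrary affine varieties). What you have written is the standard direct verification of the universal property, and the key estimate is right: submultiplicativity gives $\|b^\alpha\|\le R^{|\alpha|}$ once $R>\max_j\|b_j\|$, the Cauchy inequalities on the polydisc of radius $2R$ give $|c_\alpha|\le p_{2R}(f)(2R)^{-|\alpha|}$, and the product of geometric series yields the bound $2^n p_{2R}(f)$, which proves absolute convergence and continuity of $f\mapsto f(b)$ against every submultiplicative seminorm simultaneously. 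Two small points you gloss over and should make explicit: first, the partial sums of $\sum_\alpha c_\alpha b^\alpha$ are Cauchy for every continuous seminorm of $B$ (any such seminorm is dominated by a submultiplicative one, since the family may be taken directed), so the sum converges in $B$ itself by completeness --- you do not need to pass to the Banach completions and reassemble; second, that $\hat\varphi$ is multiplicative follows from density of the polynomials together with joint continuity of multiplication, and uniqueness likewise follows from density, so the pair $(\mathcal{O}(\mathbb{C}^n),\iota)$ satisfies the universal property and is therefore the Arens--Michael envelope by its uniqueness up to topological isomorphism.
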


This theorem, attributed to J.Taylor, can be formulated as follows: the Arens-Michael envelope of the algebra of regular functions on $\mathbb{C}^n$ is isomorphic to the algebra of holomorphic functions on $\mathbb{C}^n$. In fact, the same holds true for an arbitrary affine complex algebraic variety, see \cite[Example 3.6]{PirkAM}. Therefore, it makes sense to define the algebra of ``holomorphic functions'' on a noncommutative affine algebraic variety as the Arens-Michael envelope of the algebra of ``regular functions'' on it. In other words, the notion of Arens-Michael envelope serves as a ``bridge'' between algebra and functional analysis.

%
%


In this paper we are concerned with ``computing'' the Arens-Michael envelopes for some interesting non-commutative associative finitely generated over $\mathbb{C}$ algebras. By this we mean that for an algebra $A$ we aim to explicitly construct the Arens-Michael algebra $B$ which turns out to be isomorphic to the Arens-Michael envelope $\widehat{A}$. In most (non-degenerate) cases such algebra $B$ is constructed as a power series algebra. In other words,  the underlying locally convex space of $B$ turns out to be a K\"{o}the space.

One of the more effective approaches which we use to compute Arens-Michael envelopes lies in considering Ore extensions. Suppose that $A$ is an algebra with an endomorphism $\alpha \in \text{End}(A)$ and an $\alpha$-derivation $\delta: A \rightarrow A$. Then, under some reasonable conditions on $\alpha$ and $\delta$, the Arens-Michael envelope of $A[t; \alpha, \delta]$ admits a description in terms of the Arens-Michael envelope $\widehat{A}$.

A lot of naturally occurring noncommutative algebras can be represented as iterated Ore extensions, for example, $q$-deformations of classical algebras, such as $\text{Mat}_q(2)$ or $U_q(\mathfrak{g})$.

Let $A$ be a unital associative complex algebra, $\alpha \in \text{End}(A)$ and $\delta : A \rightarrow A$ be an $\alpha$-derivation. Consider the Ore extension $A[t; \alpha, \delta]$. Then there are several cases:

\begin{enumerate}
	\item
	Suppose that the pair $(\alpha, \delta)$ is ``nice enough'' in the sense that their extensions to $\widehat{A}$ behave well enough with respect to the topology on $\widehat{A}$ ($m$-localizable families of morphisms). Then the Arens-Michael envelope of $A[t; \alpha, \delta]$ admits a relatively simple description, see \cite[Proposition 4.5]{PirkAM} and \cite[Theorem 5.17]{PirkAM}.\\
	Now suppose that $\delta = 0$ for simplicity.
	\item
	Consider now the case of general $\alpha \in \text{End}(A)$. Then $A[t; \alpha]$ still admits the explicit description of the Arens-Michael envelope, which utilizes the analytic version of the notion of tensor algebra, associated with a bimodule. However, the seminorms, which were used to describe the topology on $A\{t; \alpha\} \simeq \widehat{A[t; \alpha]}$, are difficult to compute in the general case, see \cite[Proposition 4.9]{PirkAM}, \cite[Corollary 5.6]{PirkAM} and \cite[Example 4.3]{PirkAM}.
	\item
	Now, suppose that $\alpha$ is invertible. Then one can define the Laurent Ore extension $A[t, t^{-1}; \alpha]$.
	Again, there is a case, where the pair $(\alpha, \alpha^{-1})$ is ''nice enough'', then the Arens-Michael envelope admits a description similar to the one in the case 1, see \cite[Proposition 4.15]{PirkAM} and \cite[Theorem 5.21]{PirkAM}.
	\item
	The case of arbitrary $\alpha \in \text{Aut}(A)$ is treated by the author in this paper, it is worth mentioning that the approach is inspired by methods, used in \cite{PirkAM}. In particular, in this article we introduce the analytic version of the Laurent tensor algebra, associated with an topologically invertible bimodule. 
	\item
	The most general case, $A[t; \alpha, \delta]$, is still out of reach, unfortunately.
\end{enumerate}

The structure of this paper is as follows: in Section \ref{Definitions} we give the definitions of different types of topological algebras and their Arens-Michael envelopes. Subsections \ref{Some algebraic constructions} and \ref{Invertible bimodules and the Laurent tensor algebra} are devoted to defining several algebraic constructions, in particular, the Laurent tensor algebra of an invertible bimodule $L_A(M)$. In the latter subsections we introduce the analytic analogue of $L_A(M)$, formulate and prove one of the main results of the paper.

In Section \ref{special case} we tackle the special case $M = A_\alpha$ and describe $\widehat{L}_A(A_\alpha)$ as explicitly as possible for any Fr\'echet-Arens-Michael algebra $A$ and continuous automorphism $\alpha : A \rightarrow A$. 
In the Section \ref{open questions} we state some open problems related to the Arens-Michael envelopes. We also provide some examples in Appendix \ref{appendix A}.

The main results of this paper are contained in the Theorems \ref{AMofinvertible}, \ref{topolaurent}, \ref{thm3.1}, and the Corollaries \ref{firstisom}, \ref{secondisom}.

This paper started as a translation of the author's bachelor paper ``Arens-Michael envelopes of some associative algebras'', which was written in Russian (not published).

\subsection*{Acknowledgments}
I am extremely grateful to Alexei Yu. Pirkovskii, without whom this paper would not exist, and who provided many helpful comments on earlier versions of this paper. Also I would like to thank the participants of the conference "Banach Algebras and Applications" who attended my talk about the results, which were obtained in this paper.

\section{Definitions}
\label{Definitions}

\subsection{Basic notions}
As in \cite{PirkAM}, by an \textit{algebra} we mean an associative unital $\mathbb{C}$-algebra.

For a detailed introduction to the theory of locally convex spaces and algebras the reader can see \cite{book:29603}, \cite{book:955359}, \cite{book:338950} or \cite{Helem2006}.

\begin{definition}
	Let $A$ be a locally convex space with a multiplication $\mu : A \times A \rightarrow A$, such that $(A, \mu)$ is an algebra.
	\begin{enumerate}[label=(\arabic*)]
		\item If $\mu$ is separately continuous, then $A$ is called \textit{a locally convex algebra}. 
		\item If $A$ is a complete locally convex space, and $\mu$ is jointly continuous, then $A$ is called a $\hat{\otimes}$-\textit{algebra}.
	\end{enumerate}
\end{definition}

\begin{definition}
	A locally convex algebra $A$ is called $m$-convex if the topology on it can be defined by a family of submultiplicative seminorms.
\end{definition}

\begin{definition}
	A complete locally $m$-convex algebra is called an \textit{Arens-Michael algebra}.
\end{definition}
\noindent
For us it will be important to keep in mind the following examples of Arens-Michael algebras:
\begin{enumerate}
	\item  Any Banach algebra is an Arens-Michael algebra.
	\item  For any $n \in \mathbb{N}$ the algebra $\mathcal{O}(\mathbb{C}^n)$ of holomorphic functions on $\mathbb{C}^n$, endowed with the compact-open topology, is an Arens-Michael algebra.
	\item  For any locally compact space $X$ the algebra of continuous functions $C(X)$, endowed with the topology of uniform convergence on compact sets of $X$, is an Arens-Michael algebra.
\end{enumerate}

Also, keep in mind that every Arens-Michael algebra is a $\hat{\otimes}$-algebra.

\begin{definition}
	Let $A$ be a $\hat{\otimes}$-algebra and let $M$ be a complete locally convex space with a (purely algebraic) structure of an $A$-module. Suppose that the natural maps $A \times M \rightarrow M$ and $M \times A \rightarrow M$ are jointly continuous. Then $M$ is called a $A$-$\hat{\otimes}$\textit{-bimodule}.
\end{definition}

\subsection{Arens-Michael envelopes}

\begin{definition}
	Let $A$ be an algebra. An \textit{Arens-Michael envelope} of $A$ is a pair $(\widehat{A}, i_A)$, where $\widehat{A}$ is an Arens-Michael algebra and $i_A : A \rightarrow \widehat{A}$ is an algebra homomorphism, satisfying the following universal property: for any Arens-Michael algebra $B$ and algebra homomorphism $\varphi : A \rightarrow B$ there exists a unique continuous algebra homomorphism $\hat{\varphi} : \widehat{A} \rightarrow B$ extending $\varphi$, i.e. $\varphi = \hat{\varphi} \circ i_A$:
	\[
	\begin{tikzcd}
	\widehat{A} \ar[r, "\hat{\varphi}", dashed] & B \\
	A \ar[u, "i_A"] \ar[ur, "\varphi"'] & 
	\end{tikzcd}
	\]
\end{definition}


The Arens-Michael envelope of an algebra always exists and is unique up to a topological isomorphism, it is isomorphic to the completion of $A$ with respect to the family of all submultiplicative seminorms on $A$.

We have already mentioned the Theorem \ref{holomorphicfunctions}, which serves as a fundamental example of a computation of the Arens-Michael envelope. Here are some more important examples, which we borrow from \cite{PirkAM}:

\begin{example}
	\label{freeexample}
	Denote the free algebra with generators $\xi_1, \dots, \xi_n$ over $\mathbb{C}$ by $F_n$. Then its Arens-Michael envelope is a locally convex algebra, which will be denoted by $\mathcal{F}_n$, defined as follows:
	\[
	\mathcal{F}_n := \left\lbrace a = \sum_{w \in W_n} a_w \xi^w : \left\| a \right\| _\rho = \sum_{w \in W_n} |a_w| \rho^{|w|} < \infty \ \forall \, 0 < \rho < \infty \right\rbrace.
	\]
	In particular, $\mathcal{F}_n$ is a nuclear Fr\'echet algebra.
\end{example}

\begin{example}
	Let $\mathfrak{g}$ be a finite-dimensional semisimple Lie algebra. The Arens-Michael envelope of $U(\mathfrak{g})$ is isomorphic to the direct product $\prod\limits_{V \in \hat{\mathfrak{g}}} \text{Mat}(V)$, where $\hat{\mathfrak{g}}$ is the set of the equivalence classes of the finite-dimensional irreducible reps of $\mathfrak{g}$.
\end{example}
Sometimes the Arens-Michael envelope of an algebra is isomorphic to the zero algebra:
\begin{example}
	Suppose that $A$ is an algebra generated by $x$ and $y$ with the single relation $xy - yx = 1$. Then $\widehat{A} = 0$, because an arbitrary non-zero Banach algebra $B$ cannot contain such elements $x, y \in B$, such that $[x, y] = 1$.
\end{example}
The definition of Arens-Michael envelopes can be given in case of bimodules, too.
\begin{definition}
	Let $A$ be a algebra and suppose that $M$ is an $A$-bimodule. Then an \textit{Arens-Michael envelope} of $M$ is a pair $(\widehat{M}, i_M)$, where $\widehat{M}$ is a $\widehat{A}$-$\hat{\otimes}$-bimodule and $i_M : M \rightarrow \widehat{M}$ is a $A$-bimodule homomorphism, which satisfies the following universal property: for any $\widehat{A}$-$\hat{\otimes}$-bimodule $N$ and $A$-bimodule homomorphism $f : M \rightarrow N$ there exists a unique continuous $\widehat{A}$-$\hat{\otimes}$-bimodule homomorphism $\hat{f} : \widehat{M} \rightarrow N$ which extends $f$:
	\[
	\begin{tikzcd}
	\widehat{M} \ar[r, "\hat{f}", dashed] & N \\
	M \ar[u, "i_A"] \ar[ur, "f"'] & 
	\end{tikzcd}
	\]
\end{definition}


In this paper we will use \cite[Proposition 6.1]{PirkStfl}, which, basically, states that the Arens-Michael functor commutes with quotients:

\begin{theorem}
	Suppose that $A$ is an algebra and $I \subset A$ is a two-sided ideal. Denote by $J$ the closure of $i_A(I)$ in $\widehat{A}$. Then $J$ is a closed two-sided ideal in $\widehat{A}$ and the induced homomorphism $A/I \rightarrow \widehat{A}/J$ extends to a topological algebra isomorphism
	\[
	\widehat{(A/I)} \simeq (\widehat{A} / J)^\sim,
	\]
	where $\widetilde{A}$ denotes the completion of $A$ as a locally convex space.
	
	Moreover, if $\widehat{A}$ is a Fr\'echet algebra, then we do not need to complete the quotient, so we have
	\[
	\widehat{(A/I)} \simeq \widehat{A} / J.
	\]
\end{theorem}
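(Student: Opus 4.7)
The plan is to prove both assertions by a standard universal property argument, after first verifying that the objects involved behave as expected. The one genuinely non-formal input is the fact that quotients and completions of Arens-Michael algebras are again Arens-Michael algebras; everything else is a diagram chase.

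First I would check that $J = \overline{i_A(I)}$ is a closed two-sided ideal of $\widehat{A}$. The inclusion $i_A(A) \cdot i_A(I) \subset i_A(I)$ follows from the fact that $i_A$ is an algebra homomorphism and $I$ is an ideal; since $i_A(A)$ is dense in $\widehat{A}$ and the multiplication of the Arens-Michael algebra $\widehat{A}$ is jointly continuous (every Arens-Michael algebra is a $\hat{\otimes}$-algebra), passing to closures gives $\widehat{A} \cdot J \subset J$ and similarly on the other side. Next I would verify that $(\widehat{A}/J)^\sim$ is an Arens-Michael algebra: if $\{\|\cdot\|_\lambda\}$ is a defining family of submultiplicative seminorms on $\widehat{A}$, then the induced quotient seminorms on $\widehat{A}/J$ are submultiplicative, and the completion of a locally $m$-convex algebra is an Arens-Michael algebra.

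Now for the construction of the isomorphism. Let $\pi : A \to A/I$ and $q : \widehat{A} \to \widehat{A}/J$ denote the quotient maps, let $j : \widehat{A}/J \hookrightarrow (\widehat{A}/J)^\sim$ denote the completion embedding, and let $i_{A/I} : A/I \to \widehat{A/I}$ be the Arens-Michael envelope of the quotient. The composition $j \circ q \circ i_A : A \to (\widehat{A}/J)^\sim$ vanishes on $I$, so it factors through $\pi$ to give an algebra homomorphism $A/I \to (\widehat{A}/J)^\sim$, which by the universal property of $\widehat{A/I}$ extends uniquely to a continuous algebra homomorphism
\[
\Phi : \widehat{A/I} \longrightarrow (\widehat{A}/J)^\sim.
\]
Conversely, the composition $i_{A/I} \circ \pi : A \to \widehat{A/I}$ extends by the universal property of $\widehat{A}$ to a continuous homomorphism $\widetilde{\pi} : \widehat{A} \to \widehat{A/I}$. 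Since $\widetilde{\pi} \circ i_A = i_{A/I} \circ \pi$ vanishes on $I$, by continuity $\widetilde{\pi}$ vanishes on $J = \overline{i_A(I)}$, hence factors through $q$; composing with the extension to the completion yields a continuous algebra homomorphism
\[
\Psi : (\widehat{A}/J)^\sim \longrightarrow \widehat{A/I}.
\]

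To finish I would show $\Phi$ and $\Psi$ are mutually inverse. Both composites $\Phi \circ \Psi$ and $\Psi \circ \Phi$ agree with the identity on the dense images of $A$, and by the uniqueness clause in the two universal properties they must coincide with the identity on the full algebras. For the Fr\'echet addendum, if $\widehat{A}$ is Fr\'echet then the quotient $\widehat{A}/J$ by the closed ideal $J$ is again Fr\'echet, in particular already complete, so $(\widehat{A}/J)^\sim = \widehat{A}/J$ and the first isomorphism specializes to the second. The only place where care is required is in setting up the completion of $\widehat{A}/J$ in the non-Fr\'echet case and in checking that completion preserves local $m$-convexity; once that is granted, the rest is a routine application of the two universal properties.
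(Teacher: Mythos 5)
The paper does not actually prove this statement --- it is imported verbatim from Proposition~6.1 of the cited reference --- and your argument is precisely the standard universal-property proof of that result: identify $J$ as a closed ideal via density of $i_A(A)$ and joint continuity, check $(\widehat{A}/J)^\sim$ is Arens--Michael, build the two maps from the two universal properties, and compare on dense images. It is correct; the only cosmetic point is that for $\Phi\circ\Psi=\mathrm{id}$ on $(\widehat{A}/J)^\sim$ the justification is density of the image of $A$ together with continuity and Hausdorffness (there is no universal property attached to $(\widehat{A}/J)^\sim$ itself), which you in effect already invoke.
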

\textbf{Remark.} As a corollary from this theorem and the Example \ref{freeexample} we have that the Arens-Michael algebra of any finitely generated algebra over $\mathbb{C}$ is a Fr\'echet algebra.

\section{Topological analogues of invertible bimodules their Laurent tensor algebras}
\label{Defining topological analogues of invertible bimodules and constructing the analytic Laurent tensor algebras}
\subsection{Some algebraic constructions}
\label{Some algebraic constructions}

Firstly, let's recall the definitions of some crucial algebraic constructions, which we will use throughout this paper:

\begin{definition}
	Let $A$ be an algebra and suppose that $\alpha$ is an endomorphism of $A$. Then we define $A_\alpha$ as a $A$-bimodule which coincides with $A$ as a left $A$-module and $x \circ a = x \alpha(a)$ for $x \in A_\alpha$, $a \in A$. Similarly, one defines ${}_\alpha A$.
\end{definition}

\begin{definition}
	Let $A$ be an algebra, $\alpha \in \text{End}(A)$ and $\delta \in \text{Der}(A, {}_\alpha A)$, or, equivalently,
	\[
\delta(ab) = \alpha(a) \delta(b) + \delta(a) b \quad \forall \, a, b \in A.
	\]
	Then the \textit{Ore extension} of $A$ with respect to $\alpha$ and $\delta$ is the vector space \[A[t; \alpha, \delta] = \left\lbrace  \sum_{i=0}^n a_i t^i : a_i \in A \right\rbrace  \] with the multiplication, which is uniquely defined by the following conditions:
	\begin{enumerate}
		\item The relation $ta = \alpha(a) t + \delta(a)$ holds for any $a \in A$
		\item The natural inclusions $A \hookrightarrow A[t; \alpha, \delta]$ and $\mathbb{C}[t] \hookrightarrow A[t; \alpha, \delta]$ are algebra homomorphisms.
	\end{enumerate}
	Also, if $\delta = 0$ and $\alpha$ is invertible, then one can define the \textit{Laurent Ore extension} \[A[t, t^{-1}; \alpha] = \left\lbrace \sum_{i = -n}^n a_i t^i : a_i \in A \right\rbrace \] with the multiplication defined in a similar way.
\end{definition}

\subsection{Invertible bimodules and the Laurent tensor algebra}
\label{Invertible bimodules and the Laurent tensor algebra}
\textbf{Remark.} We were not able to find any references about Laurent tensor algebras of invertible modules in the literature, so we decided to provide a basic exposition in this section.
\begin{definition}
	\label{invdef}
	Suppose that $A$ is an algebra and $M$ is an $A$-bimodule. Then $M$ is called an \textit{invertible} $A$\textit{-bimodule} if there exist an $A$-bimodule $M^{-1}$ together with $A$-bimodule isomorphisms $i_1 : M \otimes_A M^{-1} \simeq A$ and $i_2 : M^{-1} \otimes_A M \simeq A$ (which we shall call convolutions) such that the following diagrams commute:
	\begin{equation}
	\label{associativity diagrams}
		\begin{tikzcd}
			M \otimes_{A} M^{-1} \otimes_{A} M \ar[r, "Id_M \otimes i_2"] \ar[d, "i_1 \otimes Id_M"] & M \otimes_{A} A \ar[d, "m\otimes a \rightarrow ma"] & M^{-1} \otimes_{A} M \otimes_{A} M^{-1} \ar[r, "Id_M \otimes i_1"] \ar[d, "i_2 \otimes Id_M"] & M^{-1} \otimes_{A} A \ar[d, "n\otimes a \rightarrow na"] \\ 
			A \otimes_{A} M \ar[r, "a\otimes m \rightarrow am"] & M & A \otimes_{A} M^{-1} \ar[r, "a\otimes n \rightarrow an"] & M^{-1}
		\end{tikzcd}
	\end{equation}
\end{definition}

With any $A$-bimodule $M$ one associates the tensor algebra $T_A(M)$:
\begin{center}
	$T_A(M) := A \oplus \bigoplus\limits_{n \in \mathbb{N}} M^{\otimes n}$
\end{center}
In turn, for every invertible $A$-bimodule we can define a complex vector space which will be denoted by $L_A(M)$:
\begin{equation}
	L_A(M) := \bigoplus\limits_{n \in \mathbb{Z}} M^{\otimes n},
\end{equation}
where $M^{\otimes-n} := (M^{-1})^{\otimes n}$ and $M^{\otimes 0} := A$.

The elements belonging to $M^{\otimes n}$ for some $n \in \mathbb{Z}$ will be called \textit{homogeneous} of degree $n$. The following proposition states that $L_A(M)$ admits a natural algebra structure:

\begin{proposition}
	Suppose that $A$ is an algebra and $M$ is an invertible $A$-bimodule. Then $L_A(M)$ admits a unique multiplication that makes it into an associative algebra and satisfies the following conditions:
	\begin{enumerate}[label=(\arabic*)]
		\item the natural inclusions
		$j_M : T_A(M) \rightarrow L_A(M)$ and $j_{M^{-1}} : T_A(M^{-1}) \rightarrow L_A(M)$ are algebra homomorphisms.
		\item for any $m \in M$ and $n \in M^{-1}$ we have $m \cdot n = i_1(m \otimes n)$ and $n \cdot m = i_2(n \otimes m)$.
	\end{enumerate}
\end{proposition}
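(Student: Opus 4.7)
The plan is to establish uniqueness first, then construct the multiplication as a graded product, and finally verify associativity using the coherence diagrams of Definition \ref{invdef}. For \textbf{uniqueness}, note that every homogeneous element of degree $n > 0$ is an $A$-bilinear combination of elementary tensors $m_1 \otimes \cdots \otimes m_n$, and condition (1) forces each such tensor to equal the product $j_M(m_1) \cdots j_M(m_n)$ in $L_A(M)$; symmetrically in negative degree via $j_{M^{-1}}$. Hence $L_A(M)$ is generated as an algebra by $A \cup M \cup M^{-1}$, and condition (2) fixes all remaining products of generators. Any associative multiplication satisfying (1)--(2) is therefore uniquely determined.

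For \textbf{existence}, I would define graded multiplication maps $\mu_{p,q} : M^{\otimes p} \otimes_A M^{\otimes q} \to M^{\otimes(p+q)}$ as follows. When $p$ and $q$ have the same sign (or one of them is zero), $\mu_{p,q}$ is the canonical bimodule isomorphism inherited from $T_A(M)$ or $T_A(M^{-1})$. When $p > 0$ and $q < 0$, I proceed by induction on $\min(p, -q)$: use $i_1$ to contract the innermost adjacent pair $M \otimes_A M^{-1} \to A$, absorb the resulting copy of $A$ via the bimodule action, and apply $\mu_{p-1,\,q+1}$; the case $p < 0,\, q > 0$ is analogous via $i_2$. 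Extending bilinearly and summing over pairs $(p,q)$ yields a multiplication on $L_A(M)$ satisfying (1)--(2) by construction.

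For \textbf{associativity}, one must show $\mu_{p+q,r} \circ (\mu_{p,q} \otimes \mathrm{Id}) = \mu_{p,q+r} \circ (\mathrm{Id} \otimes \mu_{q,r})$ on each triple tensor product $M^{\otimes p} \otimes_A M^{\otimes q} \otimes_A M^{\otimes r}$. The all-positive and all-negative cases are standard associativity for $T_A(M)$ and $T_A(M^{-1})$. In mixed-sign cases, I would reduce by induction on the total number of contractions to a handful of atomic triple configurations involving only a single use of $i_1$ or $i_2$ on each side; the two essential atomic configurations $M \otimes_A M^{-1} \otimes_A M$ and $M^{-1} \otimes_A M \otimes_A M^{-1}$ then match exactly the two commutative diagrams in \eqref{associativity diagrams}. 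The main technical obstacle will be organizing this reduction cleanly -- specifically, verifying that contractions at non-adjacent tensor positions commute with each other and with the bimodule actions, so that any two bracketings of a mixed triple collapse to the same element of $M^{\otimes(p+q+r)}$.
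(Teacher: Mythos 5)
Your proposal is correct and follows essentially the same route as the paper: define the product on homogeneous elements by iteratively contracting the adjacent $M\otimes_A M^{-1}$ (resp. $M^{-1}\otimes_A M$) pair via $i_1$ (resp. $i_2$), and deduce associativity from the commutativity of the diagrams \eqref{associativity diagrams}. Your treatment is in fact somewhat more careful than the paper's, which omits the uniqueness argument and leaves the reduction of associativity to the atomic configurations implicit.
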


\begin{proof}
	It suffices to define the multiplication on the homogeneous elements of $L_A(M)$. Fix \newline ${m_1 \otimes \dots \otimes m_k \in M \otimes \dots \otimes M}$ and $n_1 \otimes \dots \otimes n_l \in M^{-1} \otimes \dots \otimes M^{-1}$. Then define \[(m_1 \otimes \dots m_k) \cdot (n_1 \otimes \dots \otimes n_l) = (m_1 \otimes \dots m_{k-1} i_1(m_k \otimes n_1)) \cdot (n_2 \otimes \dots \otimes n_l)\] and \[(n_1 \otimes \dots \otimes n_l) \cdot (m_1 \otimes \dots m_k) = (n_1 \otimes \dots n_{l-1} i_2(n_l \otimes m_1)) \cdot (m_2 \otimes m_k),\] then we repeat the process until we get a homogeneous element of $L_A(M)$. The associativity of the resulting algebra is a straightforward corollary from the commutativity of \eqref{associativity diagrams} in the Definition \ref{invdef}. 
\end{proof}

Let us call the resulting algebra $L_A(M)$ the \textit{Laurent tensor algebra} of an invertible bimodule $M$.

The following proposition immediately follows from the constructions of $T_A(M)$ and $L_A(M)$.

\begin{proposition}
	\label{twisted bimodule ex}
	Suppose that $A$ is an algebra and $\alpha$ is an automorphism of $A$.
	\begin{enumerate}[label=(\arabic*)]
		\item $A_\alpha$ and $A_{\alpha^{-1}}$ are inverse $A$-bimodules with respect to the maps 
		\[ 
		i_1(a \otimes b) = a \alpha(b), \quad i_2(b \otimes a) = b \alpha^{-1}(a). 
		\]
		\item Moreover, $T(A_\alpha) \simeq A[t; \alpha]$, $L(A_\alpha) \simeq A[t, t^{-1}; \alpha]$.
	\end{enumerate}
\end{proposition}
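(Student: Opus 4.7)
The plan is to verify the two parts essentially by direct computation, with the main work lying in unwinding the bimodule structures.

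For part (1), I first check that $i_1(a \otimes b) = a\alpha(b)$ descends from the balanced tensor product $A_\alpha \otimes_A A_{\alpha^{-1}}$. The bimodule relation demands $i_1(a \cdot c \otimes b) = i_1(a \otimes c \cdot b)$, and by definition of the twisted actions this reads $a\alpha(c)\alpha(b) = a\alpha(cb)$, which holds since $\alpha$ is an algebra homomorphism. The same computation shows $i_1$ is an $A$-bimodule map: left $A$-linearity is immediate, and right $A$-linearity reduces to $\alpha(b\alpha^{-1}(a')) = \alpha(b)a'$. To see $i_1$ is a bijection, I exhibit an inverse $a \mapsto a \otimes 1$. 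One direction is immediate since $i_1(a \otimes 1) = a$. The other direction uses the bridging relation $a\alpha(b) \otimes 1 = a \cdot b \otimes 1 = a \otimes b \cdot 1 = a \otimes b$ in $A_\alpha \otimes_A A_{\alpha^{-1}}$. The analogous argument (with $\alpha$ and $\alpha^{-1}$ swapped) handles $i_2$. Commutativity of the two hexagonal diagrams in \eqref{associativity diagrams} is then a one-line check: starting from $a \otimes b \otimes c$, both routes yield $a\alpha(b)c \in M$ (respectively $a\alpha^{-1}(b)c$), using only that $\alpha \alpha^{-1} = \text{id}$.

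For part (2), the key observation is that iterating the bridging identity $a \otimes b = a\alpha(b) \otimes 1$ shows every element of $A_\alpha^{\otimes n}$ can be pushed into the leftmost slot: $a_1 \otimes \cdots \otimes a_n = a_1 \alpha(a_2)\alpha^2(a_3)\cdots \alpha^{n-1}(a_n) \otimes 1 \otimes \cdots \otimes 1.$ This identifies $A_\alpha^{\otimes n}$ with $A$ as a left $A$-module, and a short computation on the right action gives $A_\alpha^{\otimes n} \cong A_{\alpha^n}$. I then define the algebra map $\Phi : T_A(A_\alpha) \to A[t;\alpha]$ by the universal property, sending $A$ identically and the distinguished generator $t := 1 \in A_\alpha$ to the Ore variable $t$. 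Under $\Phi$ the homogeneous element $a \otimes 1 \otimes \cdots \otimes 1 \in A_\alpha^{\otimes n}$ maps to $at^n$, so $\Phi$ is a linear bijection degree by degree. Compatibility with multiplication is then exactly the identity $(a \otimes 1^{\otimes n})(b \otimes 1^{\otimes m}) = a\alpha^n(b) \otimes 1^{\otimes(n+m)} \longleftrightarrow at^n \cdot bt^m = a\alpha^n(b)t^{n+m},$ which matches the defining relation $ta = \alpha(a)t$ of the Ore extension (since $\delta = 0$).

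For the Laurent statement, I extend $\Phi$ to $L_A(A_\alpha)$ by sending the generator $t^{-1} := 1 \in A_{\alpha^{-1}}$ to $t^{-1} \in A[t, t^{-1};\alpha]$. The compatibility of $\Phi$ with the convolutions $i_1, i_2$ is precisely $\Phi(t \cdot t^{-1}) = \Phi(i_1(1 \otimes 1)) = \alpha(1) = 1$ and similarly $\Phi(t^{-1} \cdot t) = 1$, which are the defining relations of the Laurent Ore extension. The universal property of $L_A(M)$ (i.e. the uniqueness part of the preceding proposition) guarantees $\Phi$ is a well-defined algebra homomorphism, and bijectivity follows on each graded piece as in the non-Laurent case.

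The only genuinely subtle point is bookkeeping of the two twisted bimodule structures and the directions in which $\alpha$ moves when pushing tensor factors left or right; no real obstacle arises, since every step reduces to the multiplicativity of $\alpha$ or the identity $\alpha \alpha^{-1} = \text{id}$.
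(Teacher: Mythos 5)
Your verification is correct: the balancedness checks, the inverse $a \mapsto a \otimes 1$ via the bridging relation $a\alpha(b)\otimes 1 = a\otimes b$, the identification $A_\alpha^{\otimes n}\cong A_{\alpha^n}$, and the appeal to the universal property of $L_A(M)$ for the Laurent case all go through. The paper states this proposition without proof ("immediately follows from the constructions"), and your argument is exactly the routine direct computation being taken for granted there.
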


The algebra $L_A(M)$ satisfies the following universal property:

\begin{definition}
	Let $A$ be an algebra and consider an $A$-algebra $B$ with respect to a homomorphism $\theta : A \rightarrow B$ together with $A$-bimodule homomorphisms $\alpha : M \rightarrow B$, $\beta : M^{-1} \rightarrow B$. Then we will call the triple of morphisms $(\theta, \alpha, \beta, B)$ \textit{compatible} if and only if the following diagram is commutative:
	\begin{equation}
	\label{6diag}
	\begin{tikzcd}
	M \otimes_{A} M^{-1} \ar[r, "i_1"] \ar[d, "\alpha \otimes \beta"] & A \ar[d, "\theta"] & M^{-1} \otimes_{A} M \ar[d, "\beta \otimes \alpha"] \ar[l, "i_2"'] \\
	B \otimes_{A} B \ar[r, "m"] & B & B \otimes_A B \ar[l, "m"']
	\end{tikzcd}
	\end{equation} 
\end{definition}

\begin{proposition}
	The triple of morphisms $(i_A, i_M, i_{M^{-1}}, L_A(M))$, where all morphisms are tautological inclusions into $L_A(M)$, is a universal compatible triple, i.e. for any other algebra $B$ and any compatible triple of morphisms $(\theta, \alpha, \beta, B)$ there exists a unique $A$-algebra homomorphism $f : L_A(M) \rightarrow B$ such that the following diagrams commute:
	\begin{equation}
		\label{triangular diagrams}
		\begin{tikzcd}
			L_A(M) \ar[r, "f"] & B & L_A(M) \ar[r, "f"] & B & L_A(M) \ar[r, "f"] & B \\
			A \ar[u, "i_A"] \ar[ur, "\theta"'] & & M \ar[u, "i_M"] \ar[ur, "\alpha"'] & & M^{-1} \ar[u, "i_{M^{-1}}"] \ar[ur, "\beta"'] & 
		\end{tikzcd}
	\end{equation}
\end{proposition}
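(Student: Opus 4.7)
The plan is to construct $f$ by first invoking the universal properties of the two tensor subalgebras $T_A(M)$ and $T_A(M^{-1})$ sitting inside $L_A(M)$, and then checking that the resulting linear map glues into a genuine algebra homomorphism thanks to the compatibility diagram \eqref{6diag}.

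First, I would use the universal property of $T_A(M)$: the pair $(\theta, \alpha)$ yields a unique $A$-algebra homomorphism $\tilde{\alpha} : T_A(M) \to B$ restricting to $\theta$ on $A$ and to $\alpha$ on $M$, and similarly $(\theta, \beta)$ yields $\tilde{\beta} : T_A(M^{-1}) \to B$. As a vector space, $L_A(M) = \bigoplus_{n \in \mathbb{Z}} M^{\otimes n}$ splits into the positive-degree part (which is $T_A(M)$), the negative-degree part (which is $T_A(M^{-1})$), and the degree-zero part $A$ where both $\tilde{\alpha}$ and $\tilde{\beta}$ agree with $\theta$. So I define $f : L_A(M) \to B$ as the unique linear map whose restrictions to $T_A(M)$ and $T_A(M^{-1})$ are $\tilde{\alpha}$ and $\tilde{\beta}$ respectively, which makes the three triangles in \eqref{triangular diagrams} commute by construction.

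The main content is then to check that $f$ is multiplicative. On homogeneous elements of the same sign, multiplicativity is immediate from the fact that $\tilde{\alpha}$ and $\tilde{\beta}$ are algebra homomorphisms. The nontrivial case is a product $x \cdot y$ with $x = m_1 \otimes \cdots \otimes m_k \in M^{\otimes k}$ and $y = n_1 \otimes \cdots \otimes n_l \in M^{\otimes -l}$, which by definition is reduced via the rule $x \cdot y = (m_1 \otimes \cdots \otimes m_{k-1} \, i_1(m_k \otimes n_1)) \cdot (n_2 \otimes \cdots \otimes n_l)$ and iterated. The compatibility condition \eqref{6diag} precisely says $\theta \circ i_1 = m_B \circ (\alpha \otimes \beta)$, which means each reduction step on the $L_A(M)$-side translates exactly into the concatenation $\alpha(m_k)\beta(n_1)$ on the $B$-side; an analogous statement involving $i_2$ handles $y \cdot x$. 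A straightforward induction on $\min(k, l)$ then yields $f(x \cdot y) = f(x) \cdot f(y)$ in general, and the degenerate cases where one factor lies in $A$ reduce to bimodule functoriality of $\tilde{\alpha}, \tilde{\beta}$.

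For uniqueness, I would observe that $A$, $M$, and $M^{-1}$ generate $L_A(M)$ as an algebra, so any homomorphism making the three triangles commute is forced to agree with $f$ on a generating set. The main obstacle is the multiplicativity verification in the mixed-degree case: conceptually it is a direct consequence of \eqref{6diag}, but making the induction precise requires some careful bookkeeping of how the reduction process interacts with the $A$-bimodule structure on the intermediate tensor factors (in particular, that $i_1(m_k \otimes n_1)$ is absorbed as an $A$-coefficient that must be pushed through $\tilde{\alpha}$ or $\tilde{\beta}$ using the fact that they are $A$-bimodule maps).
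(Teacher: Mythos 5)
Your proposal is correct and follows essentially the same route as the paper: the paper directly defines $f$ on homogeneous tensors by $f(m_1\otimes\cdots\otimes m_k)=\alpha(m_1)\cdots\alpha(m_k)$ (and analogously with $\beta$ on negative degrees, $\theta$ on $A$), which is exactly the map you obtain by gluing $\tilde{\alpha}$ and $\tilde{\beta}$, and it likewise appeals to the commutativity of \eqref{6diag} for multiplicativity and to generation by $A$, $M$, $M^{-1}$ for uniqueness. Your write-up is in fact somewhat more careful than the paper's, which compresses the mixed-degree multiplicativity check you spell out by induction into a single sentence.
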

\begin{proof}
	It suffices to check the existence, as the uniqueness will follow as a standard category-theoretic argument.
	
	And the existence is straightforward: for every $m_1 \otimes \dots \otimes m_k \in M^{\otimes k}$ define
	\[
	f(m_1 \otimes \dots \otimes m_k) = \alpha(m_1) \dots \alpha(m_k),
	\]
	and for $n_1 \otimes \dots \otimes n_l \in M^{\otimes -l}$ we define
	\[
	f(n_1 \otimes \dots \otimes n_l) = \beta(n_1) \dots \beta(n_l),
	\]
	and $f(a) = \theta(a)$ for every $a \in A$.
	
	The commutativity of \eqref{6diag} ensures that $f$ is a well-defined homomorphism of $A$-algebras. And the diagrams \eqref{triangular diagrams} commute due to the construction of $f$.
\end{proof}

\subsection{Topologically invertible bimodules}
Now, using the language of locally convex vector spaces, we will construct topological versions of the notions we described above.

\begin{definition}
	Let $A$ be a $\hat{\otimes}$-algebra and $M$ be an $A$-$\hat{\otimes}$-bimodule. Then we will call $M$ a \textit{topologically invertible} $A$-$\hat{\otimes}$\textit{-bimodule} if there exist an $A$-$\hat{\otimes}$-bimodule $M^{-1}$ and two $A$-$\hat{\otimes}$-bimodule topological isomorphisms $i_1 : M \hat{\otimes}_A M^{-1} \simeq A$ and $i_2 : M^{-1} \hat{\otimes}_A M \simeq A$, such that the following diagrams commute:
	\begin{equation}
		\label{topological associativity diagrams}
			\begin{tikzcd}
				M \hat{\otimes}_{A} M^{-1} \hat{\otimes}_{A} M \ar[r, "Id_M \hat{\otimes} i_2"] \ar[d, "i_1 \hat{\otimes} Id_M"] & M \hat{\otimes}_{A} A \ar[d, "m \otimes r \rightarrow mr"] & M^{-1} \hat{\otimes}_{A} M \hat{\otimes}_{A} M^{-1} \ar[r, "Id_M \hat{\otimes} i_1"] \ar[d, "i_2 \hat{\otimes} Id_M"] & M^{-1} \hat{\otimes}_{A} A \ar[d, "n \otimes r \rightarrow nr"] \\ 
				A \hat{\otimes}_{A} M \ar[r, "r \otimes m \rightarrow rm"] & M & A \hat{\otimes}_{A} M^{-1} \ar[r, "r \otimes n \rightarrow rn"] & M^{-1}
			\end{tikzcd}
	\end{equation}
\end{definition}

The following proposition is the topological version of the Proposition \ref{twisted bimodule ex}.
\begin{proposition}
	\label{top. twisted bimodule}
	Let $A$ be a $\hat{\otimes}$-algebra and suppose that $\alpha$ is an automorphism of $A$. Then $A_\alpha$ and $A_{\alpha^{-1}}$ are topologically inverse $A$-$\hat{\otimes}$-bimodule with respect to the maps
	\[
	i_1(a \otimes b) = a \alpha(b), \quad i_2(b \otimes a) = b \alpha^{-1}(a).
	\]
\end{proposition}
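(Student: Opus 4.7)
The strategy is to upgrade Proposition \ref{twisted bimodule ex} to the topological setting by verifying that each algebraic ingredient -- the pairings, their inverses, and the associativity identities -- descends correctly through the completed bimodule tensor product. Since $\alpha$ and $\alpha^{-1}$ are continuous, both $A_\alpha$ and $A_{\alpha^{-1}}$ inherit genuine $A$-$\hat{\otimes}$-bimodule structures: the left actions are the multiplication in $A$, and the right actions are $\alpha$ (resp.\ $\alpha^{-1}$) composed with multiplication, each jointly continuous on $A \times A$.

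First I would check that $\mu_1(a,b) = a\alpha(b)$ and $\mu_2(b,a) = b\alpha^{-1}(a)$ define continuous $A$-bimodule homomorphisms $i_1 \colon A_\alpha \hat{\otimes}_A A_{\alpha^{-1}} \to A$ and $i_2 \colon A_{\alpha^{-1}} \hat{\otimes}_A A_\alpha \to A$. Both are jointly continuous and $A$-balanced -- for $\mu_1$ one has $(a\alpha(r))\alpha(b) = a\alpha(rb)$, which is exactly the compatibility of the right $A_\alpha$-action with the left $A_{\alpha^{-1}}$-action -- hence they descend through the universal property of $\hat{\otimes}_A$. To see that $i_1$ is an isomorphism, I would exhibit the continuous inverse $j_1 \colon A \to A_\alpha \hat{\otimes}_A A_{\alpha^{-1}}$ defined by $a \mapsto a \otimes 1$. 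The identity $i_1 \circ j_1 = \mathrm{id}_A$ is immediate. Conversely, on an elementary tensor we have $a \otimes b = a \otimes (b \cdot 1) = (a \cdot b) \otimes 1 = a\alpha(b) \otimes 1 = j_1(i_1(a \otimes b))$ by $A$-balancing, so $j_1 \circ i_1$ equals the identity on the dense span of elementary tensors, and therefore everywhere by continuity. The argument for $i_2$ is symmetric.

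Finally I would verify the two diagrams in \eqref{topological associativity diagrams} by chasing an elementary tensor and invoking density plus continuity. For the left diagram, the upper path sends $a \otimes b \otimes c$ to $a\alpha(b) \otimes c$ and then to $a\alpha(b)c$ via the left $A$-action on $A_\alpha$, while the lower path sends it to $a \otimes b\alpha^{-1}(c)$ and then to $a\alpha(b\alpha^{-1}(c)) = a\alpha(b)c$ via the right $A$-action on $A_\alpha$; the right diagram is handled identically after swapping the roles of $\alpha$ and $\alpha^{-1}$. As both composites are continuous and agree on the dense subspace of elementary tensors, they coincide on the whole triple tensor product. The only subtle point throughout is the balancing identity $a \otimes b = a\alpha(b) \otimes 1$ that underlies $j_1$; once this algebraic observation is in hand, the remainder is a routine combination of joint continuity and density, with no genuinely analytic obstruction.
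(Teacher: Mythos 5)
Your proof is correct, and it supplies exactly the routine verification that the paper omits: the paper states this proposition without proof, treating it (like its algebraic counterpart, Proposition \ref{twisted bimodule ex}) as immediate from the constructions. Your key points --- the balancing identity $a\otimes b = a\alpha(b)\otimes 1$ giving the continuous inverse $a\mapsto a\otimes 1$, and the check of the diagrams \eqref{topological associativity diagrams} on elementary tensors followed by density and continuity --- are precisely the standard argument one would write out.
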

More information on topologically invertible bimodules can be found in \cite{PirkVdB}.

There is a natural question related to Arens-Michael envelopes: is it true that the Arens-Michael envelope of an invertible bimodule is topologically invertible? At the moment we can state a conjecture:
\begin{conjecture}
	\label{conj1}
	Suppose that $A$ is an algebra and $M$ is an invertible $A$-bimodule. 
	Then there exist topological $A$-$\hat{\otimes}$-bimodule isomorphisms $\hat{i}_1 : \widehat{M} \hat{\otimes}_{\widehat{A}} \widehat{M^{-1}} \rightarrow \widehat{A}$ and $\hat{i}_2 : \widehat{M^{-1}} \hat{\otimes}_{\widehat{A}} \widehat{M} \rightarrow \widehat{A}$, satisfying the following conditions:
	\begin{enumerate}[label=(\arabic*)]
		\item $\widehat{M}$ is an topologically invertible $\widehat{A}$-$\hat{\otimes}$-bimodule w.r.t. $i_1$ and $i_2$.
		\item The following diagram is commutative:
		\begin{equation}
		\label{compat}
		\begin{tikzcd}
		M \otimes_{A} M^{-1} \ar[r, "i_1"] \ar[d, "i_M \otimes i_{M^{-1}}"] & A \ar[d, "i_A"] & M^{-1} \otimes_{A} M \ar[d, "i_{M^{-1}} \otimes i_M"] \ar[l, "i_2"'] \\
		\widehat{M} \hat{\otimes}_{\widehat{A}} \widehat{M^{-1}} \ar[r, "\hat{i}_1"] & \widehat{A} & \widehat{M^{-1}} \hat{\otimes}_{\widehat{A}} \widehat{M} \ar[l, "\hat{i}_2"']
		\end{tikzcd}
		\end{equation}
		where the left arrow maps $a \otimes b$ to $i_M(a) \otimes i_{M^{-1}}(b)$, and the right arrow maps $b \otimes a$ to $i_{M^{-1}}(b) \otimes i_M(a)$.
	\end{enumerate}
\end{conjecture}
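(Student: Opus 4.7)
The $A$-bimodule morphism $i_A \circ i_1 : M \otimes_A M^{-1} \to \widehat{A}$ takes values in an $\widehat{A}$-$\hat{\otimes}$-bimodule, so by the universal property of the bimodule Arens-Michael envelope it extends uniquely to a continuous $\widehat{A}$-bimodule morphism $\tilde{i}_1 : \widehat{M \otimes_A M^{-1}} \to \widehat{A}$. Applying the same universal property to $i_A \circ i_1^{-1}$ produces a continuous two-sided inverse, so $\tilde{i}_1$ is a topological isomorphism; the symmetric construction gives $\tilde{i}_2$. The conjecture then reduces to producing a natural topological isomorphism
\[
\Phi : \widehat{M} \hat{\otimes}_{\widehat{A}} \widehat{M^{-1}} \xrightarrow{\ \sim\ } \widehat{M \otimes_A M^{-1}},
\]
after which one sets $\hat{i}_1 := \tilde{i}_1 \circ \Phi$, and similarly for $\hat{i}_2$.

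\textbf{Second, attack $\Phi$.} One direction is easy: the $A$-bimodule map $m \otimes n \mapsto i_M(m) \otimes i_{M^{-1}}(n)$ from $M \otimes_A M^{-1}$ into the $\widehat{A}$-$\hat{\otimes}$-bimodule $\widehat{M} \hat{\otimes}_{\widehat{A}} \widehat{M^{-1}}$ extends by universal property to a continuous $\widehat{A}$-bimodule map $\Psi : \widehat{M \otimes_A M^{-1}} \to \widehat{M} \hat{\otimes}_{\widehat{A}} \widehat{M^{-1}}$. Constructing an inverse to $\Psi$ is the core difficulty, since the Arens-Michael envelope functor does not respect completed projective tensor products in general. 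My proposed route is to work inside $\widehat{L_A(M)}$: the multiplication on the Arens-Michael envelope of the Laurent tensor algebra is jointly continuous, and restricting along the canonical continuous maps $\phi_M : \widehat{M} \to \widehat{L_A(M)}$ and $\phi_{M^{-1}} : \widehat{M^{-1}} \to \widehat{L_A(M)}$ (induced from the algebraic inclusions) produces a jointly continuous $\widehat{A}$-balanced bilinear map $\widehat{M} \times \widehat{M^{-1}} \to \widehat{L_A(M)}$. Since $i_1$ sends $M \otimes_A M^{-1}$ into $A \subset L_A(M)$, by density this bilinear map takes values in the closure of the image of $\widehat{A}$ inside $\widehat{L_A(M)}$, which gives the desired map $\hat{i}_1$ after identifying this closure with $\widehat{A}$.

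\textbf{Third, verify the compatibility diagrams.} The square \eqref{compat} commutes by construction, since $\hat{i}_1$ and $\hat{i}_2$ are obtained as unique continuous extensions of $i_A \circ i_1$ and $i_A \circ i_2$, so the diagram commutes on the dense image of $M \otimes_A M^{-1}$ and $M^{-1} \otimes_A M$ respectively. Similarly, the topological associativity diagrams \eqref{topological associativity diagrams} commute on the dense image of elementary tensors in $M \otimes_A M^{\pm 1} \otimes_A M^{\mp 1}$ by virtue of their algebraic counterparts \eqref{associativity diagrams}, and hence extend to the full completed triple tensor products by joint continuity of all structure maps involved.

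\textbf{Fourth, the expected main obstacle.} The crux of the strategy is the step where the bilinear image of $\widehat{M} \times \widehat{M^{-1}}$ inside $\widehat{L_A(M)}$ is identified with an element of $\widehat{A}$. This requires knowing that the canonical map $\widehat{A} \to \widehat{L_A(M)}$ is a topological embedding onto a closed sub-bimodule (more ambitiously, that $\widehat{L_A(M)}$ admits a $\mathbb{Z}$-graded topological decomposition with $\widehat{A}$ as its degree-zero piece). Unconditionally this seems genuinely difficult; presumably this is exactly why the paper's explicit description $\widehat{L_A(M)} \cong \widehat{L}_{\widehat{A}}(\widehat{M})$ is proved only under metrizability of $\widehat{A}$ and for bimodules of the special form $M = A_\alpha$. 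A fully general proof of Conjecture~\ref{conj1} would likely require a $\mathbb{Z}$-graded refinement of the bimodule Arens-Michael functor, or a direct analysis of the seminorms defining $\widehat{M}$, and I would expect this to be the main new technical ingredient.
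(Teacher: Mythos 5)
The statement you are attempting is presented in the paper as a \emph{conjecture}, not a theorem: the paper gives no general proof, establishes only the special case $M = A_\alpha$ (Proposition \ref{example where conj is satisfied}, via the isomorphism $\widehat{A_\alpha}\simeq(\widehat{A})_{\hat{\alpha}}$) together with a conditional version of part (1) (Theorem \ref{AMofinvertible}, under the hypotheses that $i_A$ is an epimorphism of $\hat{\otimes}$-algebras and that $M\hat{\otimes}_A\widehat{A}\simeq\widehat{A}\hat{\otimes}_A M$), and explicitly lists the general validity of Conjecture \ref{conj1} among the open questions in Section \ref{open questions}. So there is no paper proof to match, and your proposal is not a proof either --- as you yourself concede in your fourth step. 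Your diagnosis of the true obstruction is sound: everything hinges on inverting $\Psi$, i.e.\ on the failure of the Arens--Michael envelope functor to commute with $\hat{\otimes}_A$, and the hypotheses of Theorem \ref{AMofinvertible} are precisely engineered to sidestep that failure.

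However, the concrete route you propose for producing the inverse is not merely ``genuinely difficult'' but refuted by the paper's own examples. You want to realize $\hat{i}_1$ by multiplying inside $\widehat{L_A(M)}$ and then identifying the closure of the image of $\widehat{A}$ in $\widehat{L_A(M)}$ with $\widehat{A}$. Take $A=\mathbb{C}[x]$, $\alpha$ the shift $f(x)\mapsto f(x-1)$, $M=A_\alpha$: by Example \ref{example2}, $\widehat{L_A(A_\alpha)}=(\mathbb{C}[x][y, y^{-1}; \alpha])^{\widehat{}}=0$ while $\widehat{A}=\mathcal{O}(\mathbb{C})\neq 0$, and yet Conjecture \ref{conj1} \emph{does} hold for this $M$ by Proposition \ref{example where conj is satisfied}. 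The closure you want to use is therefore the zero bimodule, and no trace of $\hat{i}_1$ survives inside $\widehat{L_A(M)}$; there is also a circularity, since the paper's analysis of $\widehat{L_A(M)}$ presupposes the topological invertibility of $\widehat{M}$ that you are trying to prove. A secondary gap sits in your first step: ``the same universal property'' does not directly yield a continuous map out of $\widehat{A}$, because $\widehat{A}$ is not the bimodule Arens--Michael envelope of the $A$-bimodule $A$ --- that envelope is $\widehat{A}\hat{\otimes}_A\widehat{A}$ (this is the content of the isomorphism $\widehat{N}\simeq\widehat{A}\hat{\otimes}_A N\hat{\otimes}_A\widehat{A}$ invoked in the proof of Theorem \ref{AMofinvertible}), and its coincidence with $\widehat{A}$ is exactly hypothesis (1) there. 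That particular step can be repaired by sending $\hat{a}\mapsto\hat{a}\cdot e$ with $e$ the image of $i_1^{-1}(1)$ and using density of $i_A(A)$, but the central step cannot, so the proposal does not establish the conjecture.
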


It turns out that there is a particular case in which, at least, the first statement of the above conjecture holds.

\begin{theorem}
	\label{AMofinvertible}
	Consider a $\hat{\otimes}$-algebra $A$ and a pair of topologically invertible $A$-$\hat{\otimes}$-bimodules $M, M^{-1}$. Suppose that the following conditions are satisfied:
	\begin{enumerate}[label=(\arabic*)]
		\item $i_A : A \rightarrow \widehat{A}$ is an epimorphism in the category of $\hat{\otimes}$-algebras. Equivalently, 
		\[
		\mu : \widehat{A} \hat{\otimes}_A \widehat{A} \xrightarrow{\sim} \widehat{A}, \quad (a, b) \rightarrow ab
		\]
		is a  $\widehat{A}$-$\hat{\otimes}$-bimodule isomorphism.
		\item $M \hat{\otimes}_A \widehat{A} \simeq \widehat{A} \hat{\otimes}_A M$, $M^{-1} \hat{\otimes}_A \widehat{A} \simeq \widehat{A} \hat{\otimes}_A M^{-1}$ as $A$-$\hat{\otimes}$-bimodules.
	\end{enumerate}
	Then $\widehat{M}$ and $\widehat{M}^{-1}$ are topologically invertible $\widehat{A}$-bimodules.
\end{theorem}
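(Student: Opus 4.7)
The plan is to use the standard realization of the bimodule Arens-Michael envelope as an iterated completed tensor product, and then transport $i_1$ and $i_2$ along the completion. For any $A$-bimodule $N$, the Arens-Michael envelope can be realized as $\widehat{N} \simeq \widehat{A}\,\hat{\otimes}_A\,N\,\hat{\otimes}_A\,\widehat{A}$, so in particular $\widehat{M} \simeq \widehat{A}\,\hat{\otimes}_A\,M\,\hat{\otimes}_A\,\widehat{A}$ and $\widehat{M^{-1}} \simeq \widehat{A}\,\hat{\otimes}_A\,M^{-1}\,\hat{\otimes}_A\,\widehat{A}$. I would record this identification first, together with the canonical bimodule inclusions $i_M$ and $i_{M^{-1}}$.

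Next I would construct $\hat{i}_1 : \widehat{M}\,\hat{\otimes}_{\widehat{A}}\,\widehat{M^{-1}} \to \widehat{A}$ as the composition of canonical $\widehat{A}$-$\hat{\otimes}$-bimodule isomorphisms
\begin{align*}
\widehat{M}\,\hat{\otimes}_{\widehat{A}}\,\widehat{M^{-1}}
&\simeq \bigl(\widehat{A}\,\hat{\otimes}_A\,M\,\hat{\otimes}_A\,\widehat{A}\bigr)\,\hat{\otimes}_{\widehat{A}}\,\bigl(\widehat{A}\,\hat{\otimes}_A\,M^{-1}\,\hat{\otimes}_A\,\widehat{A}\bigr) \\
&\overset{(1)}{\simeq} \widehat{A}\,\hat{\otimes}_A\,M\,\hat{\otimes}_A\,\widehat{A}\,\hat{\otimes}_A\,M^{-1}\,\hat{\otimes}_A\,\widehat{A} \\
&\overset{(2)}{\simeq} \widehat{A}\,\hat{\otimes}_A\,M\,\hat{\otimes}_A\,M^{-1}\,\hat{\otimes}_A\,\widehat{A}\,\hat{\otimes}_A\,\widehat{A} \\
&\overset{i_1}{\simeq} \widehat{A}\,\hat{\otimes}_A\,A\,\hat{\otimes}_A\,\widehat{A}\,\hat{\otimes}_A\,\widehat{A} \;\simeq\; \widehat{A},
\end{align*}
where the first (1) collapses the middle $\widehat{A}\,\hat{\otimes}_{\widehat{A}}\,\widehat{A}$, the step (2) swaps $\widehat{A}$ past $M^{-1}$ via $\widehat{A}\,\hat{\otimes}_A\,M^{-1}\simeq M^{-1}\,\hat{\otimes}_A\,\widehat{A}$, and the final collapses use condition (1) again. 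The map $\hat{i}_2$ is constructed symmetrically, using $i_2$ in place of $i_1$.

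It then remains to verify the associativity diagrams \eqref{topological associativity diagrams} for $(\widehat{M}, \widehat{M^{-1}}, \hat{i}_1, \hat{i}_2)$. Because $\hat{i}_1$ and $\hat{i}_2$ are built entirely from natural isomorphisms on top of the original $i_1$ and $i_2$, tracing an element of $\widehat{M}\,\hat{\otimes}_{\widehat{A}}\,\widehat{M^{-1}}\,\hat{\otimes}_{\widehat{A}}\,\widehat{M}$ (respectively its mirror image) through both routes of each square should reduce the check, by naturality, to the given diagrams \eqref{topological associativity diagrams} for $(M, M^{-1}, i_1, i_2)$.

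The main obstacle I anticipate is the careful bookkeeping of the bimodule structures under the swap supplied by condition (2). One must check that the isomorphisms $M\,\hat{\otimes}_A\,\widehat{A}\simeq\widehat{A}\,\hat{\otimes}_A\,M$ and $M^{-1}\,\hat{\otimes}_A\,\widehat{A}\simeq\widehat{A}\,\hat{\otimes}_A\,M^{-1}$ intertwine the left and right $\widehat{A}$-actions compatibly with the rearrangements in the chain above; otherwise the intermediate spaces fail to carry the expected $\widehat{A}$-bimodule structure and the final diagram chase breaks down. A secondary concern is justifying rigorously that $\widehat{A}\,\hat{\otimes}_A\,M\,\hat{\otimes}_A\,\widehat{A}$ does satisfy the universal property of the bimodule Arens-Michael envelope in the sense defined earlier in the paper.
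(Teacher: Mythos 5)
Your proposal is correct and follows essentially the same route as the paper: the paper also realizes $\widehat{M}\simeq\widehat{A}\,\hat{\otimes}_A M\,\hat{\otimes}_A\widehat{A}$ (citing \cite[Remark 3.8]{PirkAM}), collapses $\widehat{A}\,\hat{\otimes}_{\widehat{A}}\widehat{A}$, swaps $\widehat{A}$ past $M^{-1}$ via hypothesis (2) (via \cite[Proposition 10.4]{PirkVdB}), collapses $\widehat{A}\,\hat{\otimes}_A\widehat{A}$ by hypothesis (1), and applies $i_1$, leaving the associativity diagrams to an analogous naturality check. The only difference is the immaterial order in which $i_1$ and the final collapse are applied.
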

\textbf{Remark.} Here we consider the Arens-Michael envelopes of topological algebras and bimodules, see \cite[Section 3]{PirkAM} for the details.

\begin{proof}
	It is an immediate corollary of the fact that $\widehat{M} \simeq \widehat{A} \hat{\otimes}_A M \hat{\otimes}_A \widehat{A}$, which is the statement of \cite[Remark 3.8]{PirkAM}, and \cite[Proposition 10.4]{PirkVdB}. The idea is to write the following chain of $\widehat{A}$-$\hat{\otimes}$-bimodule isomorphisms:
	\begin{equation}
		\begin{aligned}
			\widehat{M} & \hat{\otimes}_{\widehat{A}} \widehat{M^{-1}} \stackrel{R3.8}{\simeq} \widehat{A} \hat{\otimes}_A M  \hat{\otimes}_A \widehat{A} \hat{\otimes}_{\widehat{A}} \widehat{A} \hat{\otimes}_A M^{-1} \hat{\otimes}_A \widehat{A} \simeq \widehat{A} \hat{\otimes}_A M  \hat{\otimes}_A \widehat{A} \hat{\otimes}_A M^{-1} \hat{\otimes}_A \widehat{A} \stackrel{\text{P10.4 (ii)}}{\simeq} \\ & \simeq \widehat{A} \hat{\otimes}_A M  \hat{\otimes}_A M^{-1} \hat{\otimes}_A \widehat{A} \hat{\otimes}_{A} \widehat{A} \stackrel{(1)}{\simeq} \widehat{A} \hat{\otimes}_A M  \hat{\otimes}_A M^{-1} \hat{\otimes}_A \widehat{A}  \xrightarrow{i_1} \widehat{A} \hat{\otimes}_A A \hat{\otimes}_A \widehat{A} \stackrel{R3.8}{\simeq} \widehat{A}.
		\end{aligned}
	\end{equation}
	In a similar fashion we can show that the associativity diagrams commute.
\end{proof}

As a corollary, consider an algebra $A$ and a pair of invertible bimodules $M$,  $M^{-1}$ of at most countable dimension. Then \cite[Proposition 2.3]{PirkAM} implies the following statements:
\begin{enumerate}[label=(\arabic*)]
	\item $A_s$ is a $\hat{\otimes}$-algebra, $M_s$, $(M^{-1})_s$ are $A_s$-$\hat{\otimes}$-bimodules.
	\item These bimodules are topologically invertible as $A_s$-bimodules.
\end{enumerate}
Suppose that $A_s$, $M_s$ and $(M^{-1})_s$ satisfy the conditions of the Theorem \ref{AMofinvertible}. Then $\widehat{M}$ and $\widehat{M^{-1}}$ are topologically invertible $\widehat{A}$-bimodules.

\begin{proposition}
	\label{example where conj is satisfied}
	The conjecture \ref{conj1} holds in the case of $M = A_\alpha$, where $A$ is an arbitrary associative algebra and $\alpha \in \text{Aut}(A)$.
\end{proposition}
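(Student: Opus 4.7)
The plan is to identify $\widehat{A_\alpha}$ with the twisted bimodule $\widehat{A}_{\widehat{\alpha}}$, where $\widehat{\alpha}$ is the continuous extension of $\alpha$ to the Arens-Michael envelope $\widehat{A}$. Once this identification is established, both statements in Conjecture \ref{conj1} reduce to Proposition \ref{top. twisted bimodule} applied to $\widehat{A}$ and $\widehat{\alpha}$. To set this up, I would first lift $\alpha$ and $\alpha^{-1}$ by applying the universal property of the Arens-Michael envelope to the algebra homomorphisms $i_A\circ\alpha, i_A\circ\alpha^{-1} : A \to \widehat{A}$, obtaining unique continuous endomorphisms $\widehat{\alpha}, \widehat{\alpha^{-1}} : \widehat{A} \to \widehat{A}$ satisfying $\widehat{\alpha}\circ i_A = i_A\circ\alpha$ and $\widehat{\alpha^{-1}}\circ i_A = i_A\circ\alpha^{-1}$. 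Uniqueness in the universal property then forces $\widehat{\alpha}\circ\widehat{\alpha^{-1}} = \widehat{\alpha^{-1}}\circ\widehat{\alpha} = \mathrm{id}_{\widehat{A}}$, so $\widehat{\alpha}$ is a continuous automorphism of $\widehat{A}$.

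Next I would show that the pair $(\widehat{A}_{\widehat{\alpha}}, i_A)$ is the Arens-Michael envelope of the $A$-bimodule $A_\alpha$, and similarly $(\widehat{A}_{\widehat{\alpha}^{-1}}, i_A)$ for $A_{\alpha^{-1}}$. That $i_A : A_\alpha \to \widehat{A}_{\widehat{\alpha}}$ is an $A$-bimodule homomorphism follows from the computation
\[
i_A(x\circ a) = i_A(x\alpha(a)) = i_A(x)i_A(\alpha(a)) = i_A(x)\widehat{\alpha}(i_A(a)),
\]
where the right-hand side is precisely the twisted right $A$-action on $\widehat{A}_{\widehat{\alpha}}$ inherited from its $\widehat{A}$-bimodule structure. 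For the universal property, given any $\widehat{A}$-$\hat{\otimes}$-bimodule $N$ and any $A$-bimodule map $f : A_\alpha \to N$, the algebra Arens-Michael property of $\widehat{A}$ produces a unique continuous left $\widehat{A}$-linear extension $\widehat{f} : \widehat{A} \to N$; right $\widehat{A}$-linearity with respect to the twisted action on $\widehat{A}_{\widehat{\alpha}}$ then follows by continuity from its validity on the dense subalgebra $i_A(A) \subseteq \widehat{A}$.

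With these identifications in place, Proposition \ref{top. twisted bimodule} applied to $\widehat{A}$ and $\widehat{\alpha}$ directly furnishes topological $\widehat{A}$-$\hat{\otimes}$-bimodule isomorphisms $\hat{i}_1(x\otimes y) = x\widehat{\alpha}(y)$ and $\hat{i}_2(y\otimes x) = y\widehat{\alpha}^{-1}(x)$ that make $\widehat{A}_{\widehat{\alpha}}$ and $\widehat{A}_{\widehat{\alpha}^{-1}}$ topologically inverse bimodules and satisfy the associativity diagrams \eqref{topological associativity diagrams}, settling part (1). For part (2), commutativity of the square \eqref{compat} can be checked on elementary tensors:
\[
\hat{i}_1(i_A(a)\otimes i_A(b)) = i_A(a)\widehat{\alpha}(i_A(b)) = i_A(a)i_A(\alpha(b)) = i_A(a\alpha(b)) = i_A(i_1(a\otimes b)),
\]
and the analogous identity for $\hat{i}_2$ follows symmetrically; continuity then extends agreement to all of $\widehat{A}_{\widehat{\alpha}}\hat{\otimes}_{\widehat{A}}\widehat{A}_{\widehat{\alpha}^{-1}}$. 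I expect the main subtlety to be the universal-property identification $\widehat{A_\alpha} \cong \widehat{A}_{\widehat{\alpha}}$: the extension produced by the Arens-Michael property of $\widehat{A}$ is a priori only left-linear, and one must verify that continuity together with density of $i_A(A)$ upgrades it to a twisted-right-linear map, which is a genuinely topological claim rather than a purely algebraic one.
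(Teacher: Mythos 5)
Your argument is correct and follows essentially the same route as the paper: identify $\widehat{A_\alpha}$ with $\widehat{A}_{\widehat{\alpha}}$, invoke Proposition \ref{top. twisted bimodule} for part (1), and verify the diagram \eqref{compat} on elementary tensors (the paper's computation is literally your displayed identity). The only difference is that the paper simply cites \cite[Corollary 5.6]{PirkAM} for the identification $\widehat{A_\alpha}\cong\widehat{A}_{\widehat{\alpha}}$, whereas you reprove it via the bimodule universal property; your sketch of that step is sound, though the extension $\widehat{f}$ comes from the $\widehat{A}$-$\hat{\otimes}$-module structure of $N$ (namely $\widehat{f}(\hat a)=\hat a\cdot f(1)$) rather than from the algebra universal property as you phrase it.
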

\begin{proof}
	We refer to the \cite[Corollary 5.6]{PirkAM} which states that $\widehat{A_\alpha} \simeq (\widehat{A})_{\hat{\alpha}}$. Taking the necessary isomorphisms from the Proposition \ref{top. twisted bimodule}, we get (1), and the following computation proves the commutativity of the left side of \eqref{compat}:
	\[
		\hat{i}_1(i_A(a) \otimes i_A(b)) = i_A(a \alpha(b) ) = i_A \circ i_1(a \otimes b) \quad (a \in M, b \in M^{-1})
	\]

	A similar argument also shows that the right quadrant of the diagram \eqref{compat} is commutative too.
\end{proof}

\subsection{Topological Laurent tensor algebras}
Fix an Arens-Michael algebra $A$ and a pair of topologically inverse $A$-$\hat{\otimes}$-bimodules $M$ and $M^{-1}$.

\begin{definition}
	Let $B$ be an Arens-Michael algebra, which is an $A$-algebra with respect to a continuous homomorphism $\theta : A \rightarrow B$, also let $\alpha : M \rightarrow B$, $\beta : M^{-1} \rightarrow B$ be continuous $A$-$\hat{\otimes}$-bimodule homomorphisms. Then we will call the triple $(\theta, \alpha, \beta, B)$ \textit{topologically compatible} if and only if the following diagram is commutative:
	\begin{equation}
	\label{analyt lta}
		\begin{tikzcd}
			M \hat{\otimes}_{A} M^{-1} \ar[r, "i_1"] \ar[d, "\alpha \hat{\otimes} \beta"] & A \ar[d, "\theta"] & M^{-1} \hat{\otimes}_{A} M \ar[d, "\beta \hat{\otimes} \alpha"] \ar[l, "i_2"'] \\
			B \hat{\otimes}_{A} B \ar[r, "m"] & B & B \hat{\otimes}_R B \ar[l, "m"']
		\end{tikzcd}
	\end{equation}
\end{definition}

Now we will formulate one of the main theorems of the paper:

\begin{theorem}
	\label{topolaurent}
	Let $A$ be a Fr\'echet-Arens-Michael algebra and consider topologically inverse Fr\'echet $A$-$\hat{\otimes}$-bimodules $M$, $M^{-1}$. Then there exist an Arens-Michael algebra $\widehat{L}_R(M)$ and a topologically compatible triple of morphisms $(\theta, \alpha, \beta, \widehat{L}_R(M))$ that satisfies the following universal property: for every Arens-Michael algebra $B$ and a topologically compatible triple of morphisms $(\theta', \alpha', \beta', B)$ there exists a unique continuous $A$-algebra homomorphism $f : \widehat{L}_R(M) \rightarrow B$ such that the following diagrams commute:
	\begin{equation}
		\label{topological triangular diagrams}
		\begin{tikzcd}
			\widehat{L}_R(M) \ar[r, "f"] & B & \widehat{L}_R(M) \ar[r, "f"] & B & \widehat{L}_R(M) \ar[r, "f"] & B \\
			A \ar[u, "\theta"] \ar[ur, "\theta'"'] & & M \ar[u, "\alpha"] \ar[ur, "\alpha'"'] & & M^{-1} \ar[u, "\beta"] \ar[ur, "\beta'"'] & 
		\end{tikzcd}
	\end{equation}
	
	 If this object exists, we will call it the \textit{topological(or analytic) Laurent tensor algebra} of a $A$-$\hat{\otimes}$-bimodule $M$.
\end{theorem}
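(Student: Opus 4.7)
The plan is to construct $\widehat{L}_A(M)$ as a suitable completion of the purely algebraic Laurent tensor algebra $L_A(M)$ built in Subsection \ref{Invertible bimodules and the Laurent tensor algebra}. Let $\mathcal{P}$ denote the family of all submultiplicative seminorms $p$ on $L_A(M)$ whose restrictions to $A$, $M$, and $M^{-1}$ (via the tautological inclusions $i_A, i_M, i_{M^{-1}}$) are continuous with respect to the given Fréchet topologies. The family $\mathcal{P}$ defines a locally $m$-convex topology on $L_A(M)$; let $\widehat{L}_A(M)$ be its Hausdorff completion, which by construction is an Arens-Michael algebra. Set $\theta := j \circ i_A$, $\alpha := j \circ i_M$, $\beta := j \circ i_{M^{-1}}$, where $j : L_A(M) \to \widehat{L}_A(M)$ is the canonical map; these maps are continuous by the very definition of $\mathcal{P}$.

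To verify that $(\theta, \alpha, \beta, \widehat{L}_A(M))$ is topologically compatible, observe that the algebraic Laurent tensor algebra satisfies $\alpha(m) \cdot \beta(n) = \theta(i_1(m \otimes n))$ and $\beta(n) \cdot \alpha(m) = \theta(i_2(n \otimes m))$ for $m \in M$, $n \in M^{-1}$ by the very definition of multiplication in $L_A(M)$. Both sides of the required diagram \eqref{analyt lta} are continuous maps on $M \hat{\otimes}_A M^{-1}$ (respectively $M^{-1} \hat{\otimes}_A M$): the upper-left-lower route factors through the jointly continuous multiplication on $\widehat{L}_A(M)$, while the upper-right route factors through the continuous map $\theta \circ i_1$. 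Since these agree on the image of the (dense) algebraic tensor product, they agree everywhere.

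For the universal property, suppose $(\theta', \alpha', \beta', B)$ is a topologically compatible triple with $B$ an Arens-Michael algebra. The algebraic universal property produces a unique $A$-algebra homomorphism $f_0 : L_A(M) \to B$ satisfying the triangle identities. To check continuity of $f_0$ with respect to the topology defined by $\mathcal{P}$, let $q$ be any continuous submultiplicative seminorm on $B$. The pullback $p := q \circ f_0$ is submultiplicative on $L_A(M)$, and its restrictions to $A$, $M$, $M^{-1}$ are continuous because $\theta', \alpha', \beta'$ are continuous; therefore $p \in \mathcal{P}$. It follows that $f_0$ is continuous and extends uniquely to a continuous $A$-algebra homomorphism $f : \widehat{L}_A(M) \to B$. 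Uniqueness of $f$ follows since any continuous $A$-algebra homomorphism making \eqref{topological triangular diagrams} commute must restrict on $L_A(M)$ to the algebraically unique $f_0$, and $L_A(M)$ is dense in $\widehat{L}_A(M)$.

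The main technical subtlety I expect lies in the continuous extension step of topological compatibility: one must check that both composites in \eqref{analyt lta} genuinely descend from $M \hat{\otimes} M^{-1}$ to the $A$-balanced projective tensor product $M \hat{\otimes}_A M^{-1}$. This uses that the multiplication on $\widehat{L}_A(M)$ is $A$-bilinear through $\theta$ (inherited from $L_A(M)$) together with the general fact that $\hat{\otimes}_A$ is the Hausdorff quotient of $\hat{\otimes}$ by the closure of the $A$-balancing relations. A secondary worry is ensuring $\mathcal{P}$ separates points enough to make $\widehat{L}_A(M)$ nontrivial; while the universal property alone does not demand this, the construction of nontrivial topologically compatible triples in concrete cases (such as those furnished by Theorem \ref{AMofinvertible} and Proposition \ref{example where conj is satisfied}) shows that $\mathcal{P}$ is rich in practice.
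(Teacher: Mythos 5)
Your construction is correct, but it is genuinely different from the one in the paper. The paper realizes the universal object concretely as the quotient $\widehat{T}_A(M\oplus M^{-1})/\overline{I}$ of Pirkovskii's analytic tensor algebra by the closure of the ideal generated by the relations $x\otimes y - i_1(x\otimes y)$ and $y\otimes x - i_2(y\otimes x)$; the compatibility and the universal property are then inherited from the universal property of $\widehat{T}_A(M\oplus M^{-1})$, and the Fr\'echet hypothesis is used only to guarantee that the quotient is already complete. You instead take the Hausdorff completion of the algebraic Laurent tensor algebra with respect to the family $\mathcal{P}$ of all submultiplicative seminorms restricting continuously to $A$, $M$, $M^{-1}$ --- a ``relative Arens--Michael envelope.'' Your route is shorter, purely formal, and in fact dispenses with the Fr\'echet assumption altogether (the paper itself remarks that its own argument would also survive without it after an extra completion). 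What it gives up is the explicit presentation: the quotient description is precisely what feeds into Theorem \ref{thm3.1} and Corollary \ref{secondisom}, where $\widehat{L}_A(A_\alpha)$ is identified with $A\{x_1,x_2;\alpha\}/\overline{(x_1x_2-1,\,x_2x_1-1)}$, so the abstract completion alone would not suffice for the rest of the paper. One point you should make explicit: the algebraic $L_A(M)$ of Subsection \ref{Invertible bimodules and the Laurent tensor algebra} is built for an \emph{algebraically} invertible bimodule, whereas here $i_1$, $i_2$ are isomorphisms only after passing to $\hat{\otimes}_A$; their restrictions to $M\otimes_A M^{-1}$ and $M^{-1}\otimes_A M$ are merely bimodule maps. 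The construction of the multiplication on $\bigoplus_{n\in\mathbb{Z}}M^{\otimes n}$ and the algebraic universal property never actually use bijectivity of $i_1$, $i_2$ --- only the associativity constraints, which follow from the diagrams \eqref{topological associativity diagrams} by restricting to the dense images of the algebraic tensor products --- but this needs to be said, since you are invoking a construction whose stated hypotheses are not literally met. With that remark added, your density arguments for the compatibility of the canonical triple and for the uniqueness of $f$ are sound, and the pullback-of-seminorms argument correctly establishes continuity of $f_0$.
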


The proof of the existence of the universal object will be given in the next subsection. What we want to do now is to establish the connection between analytic Laurent tensor algebras and Arens-Michael envelopes.

\begin{proposition}
	\label{Arens-Michael env of LTA}
	Now suppose that $A$ is an algebra and $M$, $M^{-1}$ are a pair of (algebraically) inverse $A$-bimodules. Suppose that the following condition holds for $\widehat{A}$, $\widehat{M}$ and $\widehat{M^{-1}}$:
	\begin{enumerate}[label=(\arabic*)]
		\item The underlying LCS of $\widehat{A}$, $\widehat{M}$ and $\widehat{M^{-1}}$ are Fr\'echet spaces. 
		\item $\widehat{M}$ and $\widehat{M^{-1}}$ are topologically inverse as $\widehat{A}$-$\hat{\otimes}$-bimodules which satisfy the Conjecture \ref{conj1}.
	\end{enumerate}
	Then, if $(\theta, \alpha, \beta, \widehat{L}_{\widehat{A}}(\widehat{M}))$ is a resulting topologically compatible triple in the Theorem \ref{topolaurent}, then $\widehat{L_A(M)} \simeq \widehat{L}_{\widehat{A}}(\widehat{M})$.
\end{proposition}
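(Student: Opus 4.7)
The plan is to show that $\widehat{L}_{\widehat{A}}(\widehat{M})$ satisfies the universal property characterizing $\widehat{L_A(M)}$, by a universal-property chase exploiting the fact that both objects naturally classify the same family of maps into Arens-Michael algebras. Concretely, I would establish a bijection between algebraically compatible triples $(\theta_0,\alpha_0,\beta_0,B)$ as in \eqref{6diag} and topologically compatible triples $(\theta,\alpha,\beta,B)$ as in \eqref{analyt lta}, where in both cases $B$ is an Arens-Michael algebra. The correspondence should be the restriction
\[
(\theta,\alpha,\beta) \longmapsto (\theta \circ i_A,\, \alpha \circ i_M,\, \beta \circ i_{M^{-1}}).
\]

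For the forward direction, I would pull the topological compatibility square \eqref{analyt lta} back along the Arens-Michael envelope units $i_A$, $i_M$, $i_{M^{-1}}$; the commutativity of diagram \eqref{compat} supplied by Conjecture \ref{conj1}(2) then shows that the pullback diagram is exactly the algebraic compatibility diagram \eqref{6diag}. For the backward direction, I would use the universal property of the Arens-Michael envelope of $A$ to extend $\theta_0$ to a continuous homomorphism $\theta : \widehat{A} \to B$, and the universal property of the Arens-Michael envelope of bimodules to extend $\alpha_0$ and $\beta_0$ to continuous $\widehat{A}$-$\hat\otimes$-bimodule homomorphisms $\alpha : \widehat{M} \to B$ and $\beta : \widehat{M^{-1}} \to B$.

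The main technical step is verifying that the extended triple $(\theta,\alpha,\beta,B)$ satisfies \eqref{analyt lta}. Here the Fréchet hypothesis (1) plays a decisive role: it ensures that the image of the algebraic tensor $M \otimes_A M^{-1}$ is dense in $\widehat{M}\,\hat\otimes_{\widehat{A}}\,\widehat{M^{-1}}$ (and similarly on the other side). On this dense image, the commutativity of \eqref{analyt lta} reduces, via the commutative square \eqref{compat} from Conjecture \ref{conj1}(2), precisely to the algebraic compatibility of $(\theta_0,\alpha_0,\beta_0)$. Continuity of every map involved then propagates the relation to the whole completed tensor product. This density-plus-continuity argument is where I expect the main obstacle to lie, since one must also ensure that the bimodule actions used in forming $\hat\otimes_{\widehat{A}}$ agree sufficiently with those used in forming $\otimes_A$ for the dense-subset comparison to make sense; this is exactly what the compatibility square \eqref{compat} is designed to provide.

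Once this bijection is in place, one concludes as follows. By Theorem \ref{topolaurent}, continuous algebra homomorphisms $\widehat{L}_{\widehat{A}}(\widehat{M}) \to B$ biject with topologically compatible triples into $B$. By the universal property of $L_A(M)$ composed with that of the Arens-Michael envelope of $L_A(M)$, continuous algebra homomorphisms $\widehat{L_A(M)} \to B$ biject with algebraically compatible triples into $B$. The bijection constructed above therefore identifies these two functors on the category of Arens-Michael algebras, and the Yoneda lemma yields the desired topological algebra isomorphism $\widehat{L_A(M)} \simeq \widehat{L}_{\widehat{A}}(\widehat{M})$. In particular, the unit of one side is transported to the unit of the other, so the isomorphism is compatible with the canonical homomorphisms from $L_A(M)$.
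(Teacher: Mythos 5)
Your proposal is correct and follows essentially the same route as the paper: the paper likewise transfers compatible triples back and forth along the envelope units $i_A$, $i_M$, $i_{M^{-1}}$ using diagram \eqref{compat} and the density of elementary tensors in the completed projective tensor product, the only cosmetic difference being that it constructs the canonical map $L_A(M)\to\widehat{L}_{\widehat{A}}(\widehat{M})$ explicitly and verifies the universal property directly rather than invoking the Yoneda lemma.
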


\begin{proof}
	Firstly we need to construct an algebra homomorphism $i : L_A(M) \rightarrow \widehat{L}_{\widehat{A}}(\widehat{M})$. Consider the following morphisms: $\theta i_A : A \rightarrow \widehat{A} \rightarrow \widehat{L}_{\widehat{A}}(\widehat{M})$, $\alpha i_M : M \rightarrow \widehat{M} \rightarrow \widehat{L}_{\widehat{A}}(\widehat{M})$ and $\beta i_{M^{-1}} : M^{-1} \rightarrow \widehat{M^{-1}} \rightarrow \widehat{L}_{\widehat{A}}(\widehat{M})$. It turns out that this triple of morphisms is (algebraically) compatible, however, this statement is not as obvious as one might think: look at the diagram, commutativity of which we aim to prove:
	\begin{equation}
		\begin{tikzcd}
				M \otimes_A M^{-1} \arrow[d, "\alpha i_M \otimes \beta i_{M^{-1}}"] \ar[r, "i_1"] &
				A \arrow[d, "\theta i_A"] &
				M^{-1} \otimes_A M \arrow[d, "\beta i_{M^{-1}} \otimes \alpha i_M"] \arrow[l, "i_2"'] \\
				\widehat{L}_{\widehat{A}}(\widehat{M}) \otimes_{A} \widehat{L}_{\widehat{A}}(\widehat{M}) \arrow[r, "m"] &
				\widehat{L}_{\widehat{A}}(\widehat{M}) &
				\widehat{L}_{\widehat{A}}(\widehat{M}) \otimes_{A} \widehat{L}_{\widehat{A}}(\widehat{M}) \arrow[l, "m"']
		\end{tikzcd}
	\end{equation}
	 Notice that we deal with the algebraic tensor product of $\widehat{L}_{\widehat{A}}(\widehat{M})$, not with completed projective tensor product. However, we can write
	 \[
	 \begin{split}
	 \theta i_A  \circ i_1 (x \otimes y) & \stackrel{\ref{compat}}{=} \theta \circ \hat{i}_1 (i_M(x) \otimes i_{M^{-1}}(y) ) \stackrel{\ref{analyt lta}}{=} m \circ (\alpha \hat{\otimes} \beta) ( i_M(x) \otimes  i_{M^{-1}} (y) ) = \\
	 & = m \circ \varphi \circ (\alpha i_M \otimes i_{M^{-1}} \beta) (x \otimes y) = m (\alpha i_M \otimes i_{M^{-1}} \beta) (x \otimes y),
	 \end{split}	 
	 \]
	 where 
	 \[
	 \varphi : \widehat{L}_{\widehat{A}}(\widehat{M}) \otimes_{A} \widehat{L}_{\widehat{A}}(\widehat{M}) \rightarrow \widehat{L}_{\widehat{A}}(\widehat{M}) \hat{\otimes}_{\widehat{A}} \widehat{L}_{\widehat{A}}(\widehat{M}), \quad \varphi(b_1 \otimes b_2) = b_1 \otimes b_2.
	 \] If we denote the algebra $\widehat{L}_{\widehat{A}}(\widehat{M})$ by $B$, then the argument can be illustrated by the following three-dimensional diagram:
	\begin{equation}
		\begin{tikzcd}[row sep=scriptsize, column sep=scriptsize]
			& M \otimes_A M^{-1} \arrow[dl] \arrow[rr, "i_1"] \arrow[dd] & & A \arrow[dl, "i_A"] \arrow[dd] & & M^{-1} \otimes_A M \arrow[dd] \arrow[dl] \arrow[ll, "i_2"] \\
			\widehat{M} \hat{\otimes}_{\widehat{A}} \widehat{M^{-1}} \arrow[rr, crossing over, "\hat{i}_1" near start] \arrow[dd] & & \widehat{A}  & & \widehat{M^{-1}} \hat{\otimes}_{\widehat{A}} \widehat{M} \arrow[ll, crossing over, "\hat{i}_2" near start] \\
			& B \otimes_A B \arrow[dl, "\varphi"] \arrow[rr, "m" near start] & & B \arrow[dl, "Id_B"] & & B \otimes_A B \arrow[dl, "\varphi"] \arrow[ll, "m" near start] \\
			B \hat{\otimes}_{\widehat{A}} B \arrow[rr, "m"] & & B \arrow[from=uu, crossing over]  & & B \hat{\otimes}_{\widehat{A}} B \arrow[from=uu, crossing over] \arrow[ll, "m"]\\
		\end{tikzcd}
	\end{equation}
	And if the triple $(\theta i_A, \alpha i_M, \beta i_{M^{-1}}, \widehat{L}_{\widehat{A}}(\widehat{M}))$ is compatible, we get the morphism $i : L_A(M) \rightarrow \widehat{L}_{\widehat{A}}(\widehat{M})$.
	
	Secondly, we need to prove that the pair $(\widehat{L}_{\widehat{A}}(\widehat{M}), i)$ satisfies the universal property. Without the loss of generality, we can assume that $X$ is a Banach algebra and $\varphi : L_A(M) \rightarrow X$ is an algebra homomorphism. In this case we consider the following continuous morphisms: $\widehat{\left.\varphi \right|_A} : \widehat{A} \rightarrow X$, $\widehat{\left.\varphi \right|_M} : \widehat{M} \rightarrow X$ and $\widehat{\left.\varphi \right|_{M^{-1}}} : \widehat{M^{-1}} \rightarrow X$, which come from the respective universal properties. The first map is an algebra homomorphism, and the latter are $\widehat{A}$-bimodule morphisms. The resulting triple is topologically compatible, and the argument is basically the same as the one we gave in the first step of the proof, we only need to keep in mind that elementary tensors span a dense subspace in a completed projective tensor products of locally convex spaces From that we get a unique $\widehat{A}$-algebra morphism $\hat{\varphi} : \widehat{L}_{\widehat{A}}(\widehat{M}) \rightarrow X$. The last thing that is left is to show that it really extends $\varphi$. However, if we restrict $\varphi$ on $A$, $M$ or $M^{-1}$, the statement holds, so it is true for $L_A(M)$.
\end{proof}

The following is a corollary of Propositions \ref{example where conj is satisfied} and \ref{Arens-Michael env of LTA}.

\begin{corollary}
	\label{firstisom}
	Suppose that $A$ is an associative algebra with the Arens-Michael envelope which is a Fr\'echet algebra, and let $\alpha \in \text{Aut}(A)$ be an arbitrary algebra automorphism. Then the following isomorphism takes place: 
	\[
	(A [ t, t^{-1}; \alpha ])^{\widehat{ }} = \widehat{ L_A(A_\alpha) } \cong \widehat{L}_{\widehat{A}} (\widehat{A}_{\widehat{\alpha}}).
	\]
\end{corollary}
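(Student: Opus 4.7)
The plan is to deduce this directly from the two cited propositions, with essentially no new work beyond bookkeeping. There are two assertions to prove: the first equality $(A[t, t^{-1}; \alpha])^{\widehat{\ }} = \widehat{L_A(A_\alpha)}$, and the isomorphism $\widehat{L_A(A_\alpha)} \cong \widehat{L}_{\widehat{A}}(\widehat{A}_{\widehat{\alpha}})$.

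First I would dispose of the equality by invoking Proposition \ref{twisted bimodule ex}(2), which identifies $L_A(A_\alpha)$ with $A[t, t^{-1}; \alpha]$ as ordinary associative algebras. Applying the Arens-Michael envelope functor to this algebra isomorphism yields the first equality immediately.

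For the main isomorphism I would apply Proposition \ref{Arens-Michael env of LTA} with $M = A_\alpha$ and $M^{-1} = A_{\alpha^{-1}}$, which by Proposition \ref{twisted bimodule ex}(1) form a pair of algebraically inverse $A$-bimodules. The proposition has two hypotheses to verify. Condition (2), that $\widehat{M}$ and $\widehat{M^{-1}}$ are topologically inverse as $\widehat{A}$-$\hat{\otimes}$-bimodules and satisfy Conjecture \ref{conj1}, is precisely the content of Proposition \ref{example where conj is satisfied}. Condition (1), that $\widehat{A}$, $\widehat{A_\alpha}$, $\widehat{A_{\alpha^{-1}}}$ are Fr\'echet spaces, reduces to the hypothesis on $\widehat{A}$ together with the identification $\widehat{A_\alpha} \simeq \widehat{A}_{\widehat{\alpha}}$ (and similarly for $\alpha^{-1}$) coming from \cite[Corollary 5.6]{PirkAM}: these bimodules coincide with $\widehat{A}$ as underlying locally convex spaces, so they inherit the Fr\'echet property.

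Having verified the hypotheses, Proposition \ref{Arens-Michael env of LTA} produces the isomorphism $\widehat{L_A(A_\alpha)} \cong \widehat{L}_{\widehat{A}}(\widehat{A_\alpha})$, and a final substitution via $\widehat{A_\alpha} \simeq \widehat{A}_{\widehat{\alpha}}$ yields the claimed $\widehat{L}_{\widehat{A}}(\widehat{A}_{\widehat{\alpha}})$. There is no real obstacle here, since the infrastructure has been set up precisely so that this corollary amounts to checking off hypotheses; the only mild subtlety is making sure the topological identification $\widehat{A_\alpha} \simeq \widehat{A}_{\widehat{\alpha}}$ is used consistently both to establish the Fr\'echet hypothesis and to rewrite the conclusion.
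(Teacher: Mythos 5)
Your proposal is correct and follows exactly the route the paper intends: the paper derives this corollary directly from Propositions \ref{example where conj is satisfied} and \ref{Arens-Michael env of LTA}, and your verification of the two hypotheses (Conjecture \ref{conj1} via Proposition \ref{example where conj is satisfied}, and the Fr\'echet condition via $\widehat{A_\alpha}\simeq\widehat{A}_{\widehat{\alpha}}$ having the same underlying space as $\widehat{A}$) together with the identification $L_A(A_\alpha)\simeq A[t,t^{-1};\alpha]$ from Proposition \ref{twisted bimodule ex} is precisely the bookkeeping the paper leaves implicit.
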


\subsection{Constructing the universal object}
To prove Theorem \ref{topolaurent} we need to utilize the construction of the analytic tensor algebra, described in \cite{PirkAM}.

Suppose that $A$ is an Arens-Michael algebra and $M$ is an $A$-$\hat{\otimes}$-bimodule. Fix a directed generating family of seminorms $\{ \left\|\cdot\right\|_\nu : \nu \in \Lambda \}$ on $M$. Consider the locally convex space
\begin{equation}
\widehat{T}_A(M)^{+} = \left\lbrace  (x_n) \in \prod\limits_{i=1}^{\infty} M^{\hat{\otimes} n} : \left\|(x_n)\right\|_{\nu, \rho} := \sum\limits_{n =1}^{\infty} \left\|x_n\right\|_\nu^{\hat{\otimes} n} \rho^n < \infty, \nu \in \Lambda, 0 < \rho < \infty \right\rbrace. 
\end{equation}

By definition, the seminorms $\left\|\cdot\right\|_{\nu, \rho}$ generate the topology on $\widehat{T}_A(M)^{+}$. 
\begin{definition}
	The topological (analytic) tensor algebra of $M$ is a locally convex space \[\widehat{T}_A(M) := A \oplus \widehat{T}_A(M)^{+}.\]
\end{definition}
In \cite{PirkAM} it is proven that $\widehat{T}_A(M)$ admits a multiplication which makes a natural inclusion ${f : T_A(M) \rightarrow \widehat{T}_A(M)}$ into an algebra homomorphism and turns $\widehat{T}_A(M)$ into an Arens-Michael algebra. We also will use the \cite[Proposition 4.8]{PirkAM}, which states that $\widehat{T}_A(M)$ admits a description via a universal property.

Fix an Fr\'echet-Arens-Michael algebra $A$, a pair of topologically inverse Fr\'echet $A$-$\hat{\otimes}$-bimodules $M$ and $M^{-1}$ with respect to the topological $A$-bimodule isomorphisms $i_1 : M \hat{\otimes}_A M^{-1} \rightarrow A$ and $i_2 : M^{-1} \hat{\otimes}_A M \rightarrow A$.

Now, for any $x \in M$ and $y \in M^{-1}$ consider the elements 
$$(0, 0, (x, 0) \otimes (0, y), 0, \dots) - (i_1(x \otimes y), 0, 0, \dots) \in \widehat{T}_A(M)$$ 
and  
$$(0, 0, (0, y) \otimes (x, 0), 0,  \dots) - (i_2(y \otimes x), 0, 0, \dots) \in \widehat{T}_A(M).$$ It would be reasonable to assume that these elements are equal to zero in $\widehat{L}_A(M)$. This idea serves as a motivation for the following definition:

\begin{definition}
	Let $\widehat{L}_A(M)' := \widehat{T}_A(M \oplus M^{-1}) / I$, where $I$ is the closure of a two-sided ideal generated by $(0, 0, (x, 0) \otimes (0, y), 0, \dots) - (i_1(x \otimes y), 0, 0, \dots)$ and $(0, 0, (0, y) \otimes (x, 0), 0, \dots) - (i_2(y \otimes x), 0, 0, \dots)$ for any $x \in M$, $y \in M^{-1}$.
\end{definition}

\textbf{Remark.} Actually, this is the only place where we use the Fr\'echet assumption. If $\widehat{T}_A(M \oplus M^{-1})$ is not Fr\'echet, the quotient might not be complete, we would have to complete the resulting algebra and the following proof, in fact, will still work, however this assumption makes everything easier.

Let us also denote some morphisms associated with this object:
\[j_0 : A \hookrightarrow \widehat{T}_A(M \oplus M^{-1})\]
\[j_M : M \hookrightarrow M \oplus M^{-1} \rightarrow \widehat{T}_A(M \oplus M^{-1})\]
\[j_{M^{-1}} : M^{-1} \hookrightarrow M \oplus M^{-1} \rightarrow \widehat{T}_A(M \oplus M^{-1})\]
\[\pi : \widehat{T}_A(M \oplus M^{-1}) \rightarrow \widehat{L}_A(M)'\].

If $A$ is an Fr\'echet-Arens-Michael algebra, so are $\widehat{T}_A(M \oplus M^{-1})$ and $\widehat{L}_A(M)'$. Consider a triple of morphisms $i_A = \pi \circ j_0$, $i_M = \pi \circ j_M$, $i_{M^{-1}} = \pi \circ j_{M^{-1}}$.

\begin{lemma}
	The triple $(i_A, i_M, i_{M^{-1}}, \hat{L}_A(M)')$ is topologically compatible.
\end{lemma}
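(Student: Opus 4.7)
The plan is to verify commutativity of each of the two squares in \eqref{analyt lta} first on algebraic elementary tensors, exploiting the defining generators of the ideal $I$, and then to extend by continuity to the whole completed $A$-balanced tensor products $M \hat{\otimes}_A M^{-1}$ and $M^{-1} \hat{\otimes}_A M$. The content of the lemma is essentially packaged into the construction of $I$, so the first step is close to a tautology; the only real work is the topological bookkeeping in the second step.

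For the first step, I fix $x \in M$ and $y \in M^{-1}$ and compute both sides of the left square on the elementary tensor $x \otimes y$. Using the construction of multiplication in $\widehat{T}_A(M \oplus M^{-1})$, one has
\[
m \circ (i_M \hat{\otimes} i_{M^{-1}})(x \otimes y) = i_M(x) \cdot i_{M^{-1}}(y) = \pi\bigl(j_M(x) \cdot j_{M^{-1}}(y)\bigr) = \pi\bigl((0, 0, (x, 0) \otimes (0, y), 0, \ldots)\bigr).
\]
By the very definition of the generators of $I$, the last expression is equal to $\pi(j_0(i_1(x \otimes y))) = i_A \circ i_1(x \otimes y)$, establishing the left square on elementary tensors. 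The right square is proved identically using the second family of generators of $I$ together with the convolution $i_2$.

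For the second step, I would observe that all arrows involved are continuous: $i_1$ and $i_2$ are topological isomorphisms by hypothesis; $i_A$, $i_M$ and $i_{M^{-1}}$ are continuous as compositions of the continuous inclusions $j_0$, $j_M$, $j_{M^{-1}}$ with the quotient map $\pi$; and the multiplication $m$ is jointly continuous since $\widehat{L}_A(M)'$ is a $\hat{\otimes}$-algebra. Moreover, the maps $i_M \hat{\otimes} i_{M^{-1}}$ and $i_{M^{-1}} \hat{\otimes} i_M$ descend to the balanced projective tensor products because $j_M$ and $j_{M^{-1}}$, hence $i_M$ and $i_{M^{-1}}$, are $A$-bimodule homomorphisms (which is immediate from the bimodule structure of $\widehat{T}_A(M \oplus M^{-1})$). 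Since elementary tensors span a dense subspace of each completed $A$-balanced tensor product, the identities from the first step extend to the whole tensor product, which yields commutativity of both squares.

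I do not anticipate any genuine obstacle: the ideal $I$ has been engineered precisely to enforce the required relations on generators, and the only minor point to verify carefully is the $A$-bilinearity needed to factor $i_M \hat{\otimes} i_{M^{-1}}$ and $i_{M^{-1}} \hat{\otimes} i_M$ through the balanced tensor product.
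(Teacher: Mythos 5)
Your proposal is correct and follows essentially the same route as the paper: verify both squares on elementary tensors, where the identity holds by the very definition of the generators of $I$ after applying the quotient map $\pi$, and then extend by density of elementary tensors in the completed balanced tensor products together with continuity of all maps involved. The extra care you take with $A$-bilinearity and joint continuity is sound but adds nothing beyond what the paper's argument already implicitly uses.
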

\begin{proof}
	We need to prove the commutativity of the following diagram:
	\begin{equation}
		\begin{tikzcd}
			M \hat{\otimes}_A M^{-1} \arrow[d, "i_M \otimes i_{M^{-1}}"] \ar[r, "i_1"] &
			A \arrow[d, "i_A"] &
			M^{-1} \hat{\otimes}_A M \arrow[d, "i_{M^{-1}} \otimes i_M"] \arrow[l, "i_2"] \\
			\widehat{L}_A(M)' \hat{\otimes}_{A} \widehat{L}_A(M)' \arrow[r, "m"] &
			\widehat{L}_A(M)' &
			\widehat{L}_A(M)' \hat{\otimes}_{A} \widehat{L}_A(M)' \arrow[l, "m"]
		\end{tikzcd}
	\end{equation}
	For every $x \in M$ and $y \in M^{-1}$ we can consider an elementary tensor $x \otimes y \in M \hat{\otimes}_A M^{-1}$.
	\[
		\begin{aligned}
		m \circ (i_M \otimes i_{M^{-1}}) (x \otimes y) &= m((0, (x, 0), 0, 0, \dots) \cdot (0, (0, y), 0, 0, \dots)) = (0, 0, (x, 0) \otimes (0, y), 0, \dots) \\
		& = (i_1(x \otimes y), 0, 0, \dots) = i_A \circ i_1(x \otimes y).
		\end{aligned}
	\]
	By using the fact that elementary tensors span a dense subspace in $M \hat{\otimes}_A M^{-1}$, we finish the proof.
\end{proof}

\begin{proposition}
	$\widehat{L}_A(M)' \simeq \widehat{L}_A(M)$.
\end{proposition}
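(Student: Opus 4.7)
The plan is to verify that $\widehat{L}_A(M)'$, together with the triple $(i_A, i_M, i_{M^{-1}})$ produced by the preceding lemma, satisfies the universal property in Theorem \ref{topolaurent}. Since the lemma already establishes topological compatibility of this triple, the content is to construct, for every Arens-Michael algebra $B$ and every topologically compatible triple $(\theta', \alpha', \beta', B)$, a unique continuous $A$-algebra homomorphism $f : \widehat{L}_A(M)' \to B$ extending the data. Once this is done, the standard abstract-nonsense argument produces a canonical topological isomorphism between any two universal objects, and in particular gives $\widehat{L}_A(M)' \simeq \widehat{L}_A(M)$.

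For existence, I would first package $\alpha'$ and $\beta'$ into a single continuous $A$-$\hat{\otimes}$-bimodule morphism $\alpha' \oplus \beta' : M \oplus M^{-1} \to B$, where $B$ carries the $A$-bimodule structure coming from $\theta'$. The universal property of the analytic tensor algebra \cite[Proposition 4.8]{PirkAM} then yields a unique continuous $A$-algebra homomorphism $\tilde{f} : \widehat{T}_A(M \oplus M^{-1}) \to B$ extending it. The key verification is that $\tilde{f}$ annihilates the two types of generators of $I$: for $x \in M$ and $y \in M^{-1}$, the topological compatibility \eqref{analyt lta} of $(\theta', \alpha', \beta', B)$ yields
\[
\tilde{f}\bigl( (0,0,(x,0) \otimes (0,y),0,\dots) \bigr) = \alpha'(x)\beta'(y) = \theta'(i_1(x \otimes y)) = \tilde{f}\bigl( (i_1(x \otimes y),0,0,\dots) \bigr),
\]
and the second family of generators is handled symmetrically via $i_2$. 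Since $\tilde{f}$ is continuous, it vanishes on the closed two-sided ideal $I$, and therefore descends to a continuous $A$-algebra homomorphism $f : \widehat{L}_A(M)' \to B$ with $f \circ \pi = \tilde{f}$; the commutativity of the three diagrams \eqref{topological triangular diagrams} is then immediate from the definitions of $i_A$, $i_M$, $i_{M^{-1}}$.

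Uniqueness of $f$ reduces to showing that the subalgebra generated by $i_A(A) \cup i_M(M) \cup i_{M^{-1}}(M^{-1})$ is dense in $\widehat{L}_A(M)'$; this is inherited from the analogous density statement for $\widehat{T}_A(M \oplus M^{-1})$ (which is built into its construction via the seminorms $\|\cdot\|_{\nu,\rho}$) together with the continuity and surjectivity of $\pi$. I do not anticipate a genuine obstacle — the only subtle point is the use of the Fr\'echet assumption, which ensures that the quotient by $I$ remains complete so that $\widehat{L}_A(M)'$ is an honest Arens-Michael algebra rather than something requiring a further completion, as already flagged in the remark preceding this proposition. Everything else is a routine diagram chase transporting topological compatibility across the universal property of the analytic tensor algebra.
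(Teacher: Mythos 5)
Your proposal is correct and follows essentially the same route as the paper: reduce to the universal property, use \cite[Proposition 4.8]{PirkAM} to extend $\alpha' \oplus \beta'$ to $\widehat{T}_A(M \oplus M^{-1})$, check via topological compatibility that the generators of $I$ are annihilated, and pass to the quotient. The only cosmetic difference is that the paper reduces without loss of generality to $B$ a Banach algebra, while you work with a general Arens-Michael algebra directly; you also spell out the uniqueness/density step that the paper leaves implicit.
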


\begin{proof}
	We must check that the triple $(i_A, i_M, i_{M^{-1}}, \hat{L}_A(M)')$ satisfies the universal property. Suppose that $B$ is a Banach algebra and that  $(\theta, \gamma, \delta, B)$ is an topologically compatible triple. Consider the direct sum of $\gamma$ and $\delta$: $\gamma \oplus \delta : M \oplus M^{-1} \rightarrow B$. It is a continuous $A$-bimodule morphism which, by \cite[Proposition 4.8]{PirkAM}, can be uniquely extended to a continuous $A$-algebra morphism ${\varphi : \widehat{T}_A(M \oplus M^{-1}) \rightarrow B}$. From the fact that $(\theta, \gamma, \delta, B)$ is topologically compatible it easily follows that $\varphi(I) = 0$, so, in fact, we obtain a unique continuous $A$-algebra homomorphism $\tilde{\varphi} : \hat{L}_A(M)' \rightarrow B$. Due to the construction it extends $\theta$, $\gamma$ and $\delta$. 
\end{proof}

This concludes the proof of the Theorem \ref{topolaurent}.

\section{The case of \texorpdfstring{$M = A_\alpha$}{M equals Aalpha}}
\label{special case}

\subsection{Localizable linear maps between locally convex spaces}
\begin{definition}
		Let $A$ be an Arens-Michael algebra and let $\mathcal{F} \subset \mathcal{L}(A)$ be a family of continuous linear maps $A \rightarrow A$. 
		
		Then $F$ is called an $m$-localizable family if the topology on $A$ can be defined a family of submultiplicative seminorms $\{ \left\| \cdot \right\|_\lambda \}_{\lambda \in \Lambda}$, satisfying the following property: for every $T \in \mathcal{F}$ there exists a constant $C_T > 0$, such that
		\[
		\left\| Ta \right\|_\lambda \le C_T  \left\| a \right\|_\lambda \quad \text{for every } a \in A.
		\]
		An operator $T \in \mathcal{L}(E)$ is called ($m$-)localizable $\iff \{T\}$ is a ($m$-)localizable family.
\end{definition}

Suppose now that $A$ is an Arens-Michael algebra, $\alpha$ is a continuous automorphism of $A$, such that $\{\alpha, \alpha^{-1} \}$ is a $m$-localizable family. Fix a generating family of seminorms $\{ \left\|\cdot\right\|_\lambda : \lambda \in \Lambda \}$, then we can define the following vector space:
\begin{equation}
	\mathcal{O}(\mathbb{C}^\times, A) := \left\lbrace  f = \sum_{i=-\infty}^{\infty} a_i t^i : \left\|f\right\|_{\lambda, \rho} := \sum_{-\infty}^{\infty} \left\|a_i\right\|_\lambda \rho^i < \infty \ \forall \lambda \in \Lambda, 0 < \rho < \infty \right\rbrace.
\end{equation}
This vector space with topology, generated by $\left\|\cdot\right\|_{\lambda, \rho}$ becomes a complete locally convex space. Moreover, \cite[Lemma 4.12]{PirkAM} and  in \cite[Proposition 4.15]{PirkAM} state that in our case $\mathcal{O}(\mathbb{C}, A)$ admits a unique multiplication, which is compatible with $\alpha$ (i.e. $ta = \alpha(a)t, t^{-1}a = \alpha^{-1}(a)t^{-1}$ for every $a \in A$) and makes $\mathcal{O}(\mathbb{C}^\times, A)$ into an Arens-Michael algebra, which is denoted by $\mathcal{O}(\mathbb{C}^\times, A; \alpha)$.

\begin{proposition}	
	Under assumptions made above, $\widehat{L}_A(A_\alpha) \simeq \mathcal{O}(\mathbb{C}^\times, A; \alpha)$.
\end{proposition}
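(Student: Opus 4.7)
The plan is to show that $\mathcal{O}(\mathbb{C}^\times, A; \alpha)$, equipped with three natural inclusion-type morphisms, satisfies the universal property of $\widehat{L}_A(A_\alpha)$ stated in Theorem \ref{topolaurent}; the proposition will then follow from uniqueness of universal objects. I would define the candidate triple by taking $\theta : A \hookrightarrow \mathcal{O}(\mathbb{C}^\times, A;\alpha)$ to be the obvious inclusion, $\alpha_0 : A_\alpha \to \mathcal{O}(\mathbb{C}^\times, A;\alpha)$ to send $a$ to $at$, and $\beta_0 : A_{\alpha^{-1}} \to \mathcal{O}(\mathbb{C}^\times, A;\alpha)$ to send $a$ to $at^{-1}$. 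Continuity is immediate from the identities $\|at^{\pm 1}\|_{\lambda,\rho} = \|a\|_\lambda \rho^{\pm 1}$, and the bimodule property of $\alpha_0$ with respect to the twisted right $A$-action uses the relation $ta = \alpha(a)t$ in the target: $\alpha_0(a\cdot_{A_\alpha} b) = a\alpha(b)t = at\cdot b = \alpha_0(a)\,\theta(b)$, with the analogous computation for $\beta_0$.

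Next I would verify topological compatibility by checking both squares of diagram \eqref{analyt lta} on elementary tensors. One obtains
\[
m(\alpha_0(a) \otimes \beta_0(b)) = at\cdot bt^{-1} = a\alpha(b) = \theta(i_1(a \otimes b)),
\]
and symmetrically $m(\beta_0(b) \otimes \alpha_0(a)) = bt^{-1}\cdot at = b\alpha^{-1}(a) = \theta(i_2(b\otimes a))$. Density of elementary tensors and continuity then give the compatibility on all of the completed tensor products.

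For the universal property, let $(\theta', \alpha', \beta', B)$ be an arbitrary topologically compatible triple with $B$ an Arens-Michael algebra. Setting $\tau := \alpha'(1_A)$ and evaluating the compatibility squares at $1\otimes 1$ yields $\tau\cdot \beta'(1) = \beta'(1)\cdot \tau = 1_B$, so $\tau$ is invertible in $B$ with $\tau^{-1} = \beta'(1)$. The bimodule properties of $\alpha'$ and $\beta'$ then give the commutation relations $\tau\theta'(a) = \theta'(\alpha(a))\tau$ and $\tau^{-1}\theta'(a) = \theta'(\alpha^{-1}(a))\tau^{-1}$. I would then define
\[
f : \mathcal{O}(\mathbb{C}^\times, A;\alpha) \to B, \qquad \sum_{i \in \mathbb{Z}} a_i t^i \;\longmapsto\; \sum_{i \in \mathbb{Z}} \theta'(a_i) \tau^i.
\]

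The hardest part will be showing that $f$ is well defined and continuous, and this is where the Arens-Michael structure of $B$ enters essentially. For any submultiplicative seminorm $p$ on $B$, continuity of $\theta'$ provides $\lambda \in \Lambda$ and $C > 0$ with $p \circ \theta' \le C\|\cdot\|_\lambda$; setting $R := \max\bigl(1, p(\tau), p(\tau^{-1})\bigr)$, submultiplicativity of $p$ yields $p(\tau^i) \le R^{|i|}$ for every $i \in \mathbb{Z}$. Hence
\[
p\!\left( \sum_i \theta'(a_i) \tau^i \right) \le C \sum_i \|a_i\|_\lambda R^{|i|} \le C \bigl( \|f\|_{\lambda, R} + \|f\|_{\lambda, R^{-1}} \bigr),
\]
which will give both absolute convergence in $B$ and continuity of $f$. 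Multiplicativity of $f$ is straightforward on the dense subalgebra $A[t, t^{-1}; \alpha] \subset \mathcal{O}(\mathbb{C}^\times, A;\alpha)$ using the commutation relations above, and it extends by continuity. Uniqueness of $f$ among continuous $A$-algebra morphisms extending $\theta', \alpha', \beta'$ is automatic from density of $A[t, t^{-1}; \alpha]$, completing the verification of the universal property and yielding $\widehat{L}_A(A_\alpha) \simeq \mathcal{O}(\mathbb{C}^\times, A; \alpha)$.
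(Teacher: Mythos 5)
Your proposal is correct and follows essentially the same route as the paper: exhibit the natural triple $(\theta,\alpha_0,\beta_0,\mathcal{O}(\mathbb{C}^\times,A;\alpha))$, check topological compatibility on elementary tensors, and derive the universal property from the invertibility of $\tau=\alpha'(1)$. The only difference is that where the paper simply invokes \cite[Proposition 4.14]{PirkAM} to produce the unique continuous homomorphism $f$ with $f(t)=\tau$, you reprove that statement directly via the submultiplicative estimate $p(\tau^i)\le R^{|i|}$ and the bound by $\left\|\cdot\right\|_{\lambda,R}+\left\|\cdot\right\|_{\lambda,R^{-1}}$, which is a correct and slightly more self-contained treatment.
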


\begin{proof}
	Firstly, we must consider natural morphisms 
	\[
	\begin{aligned}
		i_A : A &\hookrightarrow \mathcal{O}(\mathbb{C}^\times, A; \alpha), \\
		i_{A_\alpha} : A_\alpha &\rightarrow \mathcal{O}(\mathbb{C}^\times, A; \alpha), i_{A_\alpha}(1) = t \\
		i_{A_{\alpha^{-1}}} : A_{\alpha^{-1}} &\rightarrow \mathcal{O}(\mathbb{C}^\times, A; \alpha), i_{A_{\alpha^{-1}}}(1) = t^{-1}.
	\end{aligned}
	\]
	We aim to prove that a triple of morphisms $(i_A, i_{A_\alpha}, i_{A_{\alpha^{-1}}}, \mathcal{O}(\mathbb{C}^\times, A; \alpha))$ is an topologically compatible triple, which satisfies universal property. The first part is obviously true due to the construction of $\mathcal{O}(\mathbb{C}^\times, A; \alpha)$.
	
	Suppose that $(\theta, \alpha, \beta, B)$ is another topologically compatible triple. Notice that 
	\[
	{\alpha(1) \beta(1) = \beta(1) \alpha(1) = 1},
	\]
	so $\alpha(1) \in B$ is an invertible element. Then, due to \cite[Proposition 4.14]{PirkAM}, there exists a unique continuous algebra homomorphism $f :\mathcal{O}(\mathbb{C}^\times, A; \alpha) \rightarrow B$, $f(t) = \alpha(1)$. It easily seen that $f i_A = \theta$, $f i_{A_\alpha} = \alpha$, $f i_{A_{\alpha^{-1}}} = \beta$.
\end{proof}

\subsection{The general case}

In this section $A$ is an Arens-Michael algebra and $\alpha$ is an automorphism of $A$. We aim to obtain a description of $\widehat{T}_A({A_\alpha \oplus A_{\alpha^{-1}}})$, similar to the description of $\widehat{T}_A(A_\alpha)$,  obtained in \cite[Proposition 4.9]{PirkAM}. 

For every tuple $w \in W_2$ we denote the $k$-th symbol of $w$ by $w(k)$. Also consider the functions $c_1 : W_2 \rightarrow \mathbb{Z}_{\ge 0}$ and $c_2 : W_2 \rightarrow \mathbb{Z}_{\ge 0}$ which count the number of instances of $1$ and $2$ in a tuple, respectively. Also denote $c(w) = c_1(w) - c_2(w)$. For every element in $W_2$ define an $A$-$\hat{\otimes}$-bimodule as follows:
\begin{enumerate}[label=(\arabic*)]
	\item $A_\emptyset := A$
	\item $A_{(1)} := A_\alpha$, $A_{(2)} := A_{\alpha^{-1}}$
	\item for every $w_1, w_2 \in W_2$ we have $A_{w_1 w_2} := A_{w_1} \hat{\otimes}_A A_{w_2}$
\end{enumerate}

Let $w \in W_2$ be a non-empty element and let $1 \le k \le |w|$. Replace all numbers $2$ in $w$ with $-1$ and denote the new tuple by $w'$. Let us define a function $p(w, k)$ as follows:
\begin{equation}
	p(w, k) = \sum_{i=1}^{k} w'(j) = \sum_{i=1}^{k} 3 - 2 w(j).
\end{equation}

\begin{proposition}
	For every $w \in W_2$ consider a mapping
	\[
	i_w : \prod_{i=1}^{|w|} A_{\alpha^{ w'(i) }} \rightarrow A_{\alpha^n}, \quad  i_w(x_1, \dots x_{|w|}) := x_1 \prod_{i=2}^{|w|} \alpha^{p(w, i-1)}(x_i),
	\]
	where $n = c(w)$. 
	
	Then $i_w$ is a continuous $A$-balanced map which induces a $A$-$\hat{\otimes}$-bimodule isomorphism \\ $i_w : A_w \simeq A_{\alpha^n}$.
\end{proposition}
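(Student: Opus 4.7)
The plan is to proceed by induction on $|w|$, with a key intermediate lemma handling the binary case. For the base case $|w| = 1$ both choices $w = (1)$ and $w = (2)$ reduce $i_w$ to the identity on $A_{\alpha^{\pm 1}}$, so there is nothing to verify.

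The central tool I would isolate is the following binary lemma, which is a direct generalization of Proposition \ref{top. twisted bimodule}: for any integers $k, m \in \mathbb{Z}$ the map
\[
\varphi_{k,m} : A_{\alpha^k} \hat{\otimes}_A A_{\alpha^m} \to A_{\alpha^{k+m}}, \qquad a \otimes b \mapsto a\,\alpha^k(b),
\]
is a topological $A$-$\hat{\otimes}$-bimodule isomorphism. Continuity is immediate from continuity of $\alpha^k$ and joint continuity of the multiplication of $A$; the $A$-balance follows from the computation $a \cdot y \otimes b = a\alpha^k(y) \otimes b \mapsto a\alpha^k(y)\alpha^k(b) = a\alpha^k(yb)$, which equals the image of $a \otimes y b = a \otimes y \cdot b$; left and right $\widehat{A}$-module compatibilities are direct, since the right action on $A_{\alpha^{k+m}}$ is precisely $c \cdot y = c\,\alpha^{k+m}(y) = c\,\alpha^k(\alpha^m(y))$; and a two-sided inverse is given by $c \mapsto c \otimes 1$.

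For the inductive step I would split $w$ as $w = (w(1)) \cdot v$, where $v$ is the length-$(|w|-1)$ tail. By definition $A_w = A_{w(1)} \hat{\otimes}_A A_v$, and by the inductive hypothesis there is a topological bimodule isomorphism $i_v : A_v \simeq A_{\alpha^{c(v)}}$ given by the explicit formula. Tensoring on the left with $A_{w(1)} = A_{\alpha^{w'(1)}}$ and then applying the binary lemma $\varphi_{w'(1),\, c(v)}$ yields the desired isomorphism $A_w \simeq A_{\alpha^{w'(1)+c(v)}} = A_{\alpha^{c(w)}}$. To check that this composition matches the stated formula for $i_w$, observe that $i_v$ sends $(x_2,\dots,x_{|w|})$ to $x_2 \prod_{i=3}^{|w|} \alpha^{p(v,\,i-2)}(x_i)$, and then $\varphi_{w'(1),\,c(v)}$ multiplies $x_1$ on the left by $\alpha^{w'(1)}$ applied to this product, yielding
\[
x_1\,\alpha^{w'(1)}(x_2) \prod_{i=3}^{|w|} \alpha^{w'(1) + p(v,\,i-2)}(x_i).
\]
Since $p(w,1) = w'(1)$ and $p(w,i-1) = w'(1) + p(v,i-2)$ for $i \geq 3$, this agrees with $x_1 \prod_{i=2}^{|w|} \alpha^{p(w,\,i-1)}(x_i)$.

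I expect the main obstacle to be purely bookkeeping: propagating the exponent function $p(w,\cdot)$ correctly through the inductive composition and aligning the indexing conventions between $v$ and $w$. The analytic content is mild, reducing to continuity of iterates of $\alpha$ and joint continuity of multiplication in the Arens-Michael algebra $A$, and the multiplicativity $\alpha^k(xy) = \alpha^k(x)\alpha^k(y)$ is what underwrites both the $A$-balance and the functoriality of the binary lemma under iteration.
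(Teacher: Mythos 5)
Your proof is correct, but it takes a genuinely different route from the paper's. The paper argues non-inductively: it first verifies that $i_w$ is $A$-balanced by the telescoping identity $w'(i)+p(w,i-1)=p(w,i)$, and then shows that $(A_{\alpha^n}, i_w)$ satisfies the universal property of the completed balanced tensor product, by checking that every continuous $A$-balanced map $f$ factors as $\tilde f\circ i_w$ with $\tilde f(a)=f(a,1,\dots,1)$. You instead induct on $|w|$, reducing everything to a binary lemma $\varphi_{k,m}\colon A_{\alpha^k}\hat{\otimes}_A A_{\alpha^m}\xrightarrow{\ \sim\ } A_{\alpha^{k+m}}$, which is exactly the natural generalization of Proposition \ref{top. twisted bimodule}, and you exhibit an explicit continuous two-sided inverse $c\mapsto c\otimes 1$. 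Your bookkeeping identity $p(w,i-1)=w'(1)+p(v,i-2)$ plays the same role as the paper's $w'(i)+p(w,i-1)=p(w,i)$, and your index propagation checks out. What your approach buys is a completely transparent proof that the isomorphism is \emph{topological} (an explicit continuous inverse, rather than an appeal to the universal property), at the cost of relying on the left-associated bracketing $A_w=A_{(w(1))}\hat{\otimes}_A A_v$ and on functoriality of $\hat{\otimes}_A$; what the paper's approach buys is the factorization statement for arbitrary continuous balanced maps in one uniform step, with no induction. Both are complete; the only cosmetic slip in your write-up is the stray $\widehat{A}$ where $A$ is meant in the module-compatibility check.
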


\begin{proof}
	First of all, let us prove that $i_w$ is a $A$-balanced map:
	\[
	\begin{aligned}
		& i_w(x_1 , \dots, x_i \circ r, x_{i+1}, \dots x_{|w|}) = \\
		&= x_1 \alpha^{p(w, 1)}(x_2) \dots \alpha^{p(w, i-1)} (x_i \alpha^{w'(i)}(r)) \alpha^{p(w, i)}(x_{i+1}) \dots \alpha^{p(w, |w|-1)}(x_{|w|}) = \\ 
		&= x_1 \alpha^{p(w, 1)}(x_2) \dots \alpha^{p(w, i-1)} (x_i) \alpha^{w'(i)+p(w, i-1)}(r) \alpha^{p(w, i)}(x_{i+1}) \dots \alpha^{p(w, |w|-1)}(x_{|w|}).
	\end{aligned}
	\]
	However, by definition, $w'(i)+p(w, i-1) = p(w, i)$, so we get
	\[
	\begin{aligned}
		& x_1 \alpha^{p(w, 1)}(x_2) \dots \alpha^{p(w, i-1)} (x_i) \alpha^{w'(i)+p(w, i-1)}(r) \alpha^{p(w, i)}(x_{i+1}) \dots \alpha^{p(w, |w|-1)}(x_{|w|}) =  \\ & = x_1 \alpha^{p(w_1)}(x_2) \dots \alpha^{p(w, i-1)} (x_i) \alpha^{p(w, i)}(r) \alpha^{p(w, i)}(x_{i+1}) \dots \alpha^{p(w, |w|-1)}(x_{|w|}) = \\ & = i_w(x_1, \dots, x_i, r \circ x_{i+1}, \dots, x_{|w|}).
	\end{aligned}
	\]
	Therefore, $i_w$ is balanced.
	
	Now suppose that $f : \prod_{i=1}^{|w|} A_{\alpha^{ w'(i) }} \rightarrow M$ is a continuous $A$-balanced $A$-$\hat{\otimes}$-bimodule homomorphism. Then we define
	\[
	\tilde{f} : A_{\alpha^n} \rightarrow M, \quad \tilde{f}(a) = f(a, 1, \dots, 1).
	\]
	This map is a well-defined homomorphism of $A$-$\hat{\otimes}$-bimodules: for any $b \in A$ we have
	\[
	\tilde{f}(ba) = f(ba, 1, \dots, 1) = bf(a, 1, \dots, 1) = b \tilde{f}(a),
	\]
	\[
	\begin{aligned}
		\tilde{f}(a) b &= f(a, 1, \dots, 1) \circ b = f(a, 1, \dots, \alpha^{p(w, |w|)}(b)) = \\ &= f(a, 1, \dots, 1, \alpha^{p(w, |w|-1) + p(w, |w|)}(r), 1) = \dots = f(a \alpha^n(b), 1, \dots, 1).
	\end{aligned}
	\]
	We prove that $f = \tilde{f} \circ i_w$ by using a similar argument, which we will omit here.
\end{proof}

\begin{lemma}
	\label{commd}
	The following diagram is commutative:	
	\begin{equation}
		\begin{tikzcd}
			A_{w_1} \hat{\otimes}_A A_{w_2} \arrow[r, equal ] \arrow[d, "i_{w_1} \otimes i_{w_2}"] & A_{w_1w_2} \arrow[d, "i_{w_1w_2}"] \\
			A_{\alpha^{k_1}} \hat{\otimes}_A A_{\alpha^{k_2}} \arrow[r, "\varphi"] & A_{\alpha^{k_1 + k_2}}
		\end{tikzcd},
	\end{equation}

	where $\varphi(a \otimes b) = a \alpha^{k_1}(b)$.
\end{lemma}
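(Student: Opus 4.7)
The plan is to reduce to a pointwise check on elementary tensors and then carry out the bookkeeping for the exponents of $\alpha$. First, I would invoke continuity, bilinearity, and $A$-balancedness: both compositions around the square are continuous $A$-$\hat{\otimes}$-bimodule maps, and elementary tensors of the form $(x_1 \otimes \cdots \otimes x_{|w_1|}) \otimes (y_1 \otimes \cdots \otimes y_{|w_2|})$ span a dense subspace of $A_{w_1} \hat{\otimes}_A A_{w_2} = A_{w_1 w_2}$. Hence commutativity follows once it is verified on such tensors.

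Going along the lower-left path, applying $i_{w_1} \otimes i_{w_2}$ and then $\varphi$ yields, by the definition of $i_w$ and the fact that $\alpha$ is an algebra homomorphism,
\[
\varphi \circ (i_{w_1} \otimes i_{w_2})(x \otimes y) = x_1 \prod_{i=2}^{|w_1|} \alpha^{p(w_1, i-1)}(x_i) \cdot \alpha^{k_1}(y_1) \prod_{j=2}^{|w_2|} \alpha^{k_1 + p(w_2, j-1)}(y_j).
\]
Going along the upper-right path, I would expand $i_{w_1 w_2}$ directly on the concatenated tuple. The decisive combinatorial fact is the additivity of $p$ under concatenation: for $2 \le k \le |w_1|$ one has $p(w_1 w_2, k-1) = p(w_1, k-1)$, while for $k = |w_1| + j$ with $1 \le j \le |w_2|$ one has $p(w_1 w_2, k-1) = p(w_1, |w_1|) + p(w_2, j-1)$, with the convention $p(w_2, 0) = 0$. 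The last piece is the identity $p(w_1, |w_1|) = \sum_{i} w_1'(i) = c_1(w_1) - c_2(w_1) = k_1$, which captures the shift induced when crossing from the $w_1$-block into the $w_2$-block.

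Substituting these exponents into the definition of $i_{w_1 w_2}$ reproduces the displayed formula term by term, so both paths agree on elementary tensors; by density the diagram commutes. The only genuine content is this exponent bookkeeping, which I expect to be the main (though still routine) obstacle. Everything else is formal and rests on the universal property of $\hat{\otimes}_A$ together with the already-established fact that each $i_w$ is a continuous $A$-balanced map.
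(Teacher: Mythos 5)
Your proposal is correct and follows essentially the same route as the paper: verify the identity on elementary tensors (with density supplying the rest), and reduce the exponent bookkeeping to the additivity of $p$ under concatenation together with the identity $p(w_1, |w_1|) = c_1(w_1) - c_2(w_1) = k_1$. The paper's proof is just a more condensed version of the same computation.
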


\begin{proof}
	Again, it suffices to look at elementary tensors. Let $x = x_1 \otimes \dots \otimes x_{|w_1|} \in A_{w_1}$ and \newline ${y = y_1 \otimes \dots \otimes y_{|w_2|} \in A_{w_2}}$. Then we have	
	\[
	\begin{aligned}
	\varphi \circ (i_{w_1} \otimes i_{w_2}) (x \otimes y) = \varphi & \left(\prod_{i=1}^{|w_1|} \alpha^{p(w_1, i-1)}(x_i) \otimes \prod_{i=1}^{|w_2|} \alpha^{p(w_2, i-1)}(y_i) \right) = \\ &= \prod_{i=1}^{|w_1|} \alpha^{p(w_1, i-1)}(x_i) \cdot \prod_{i=1}^{|w_2|} \alpha^{p(w_2, i-1) + k_1}(y_i).
	\end{aligned}
	\]
	Notice that $k_1 = p(w_1, |w_1|)$, therefore, 
	\[
		\prod_{i=1}^{|w_1|} \alpha^{p(w_1, i-1)}(x_i) \cdot \prod_{i=1}^{|w_2|} \alpha^{p(w_2, i-1) + k_1}(y_i) = \prod_{i=1}^{|w_1|} \alpha^{p(w_1w_2, i-1)}(x_i) \cdot \prod_{i=|w_1|+1}^{|w_2|} \alpha^{p(w_1w_2, i-1)}(y_i) = i_{w_1 w_2} (x \otimes y).
	\]
\end{proof}
Fix a generating family of seminorms $\{ \left\| \cdot \right\|_\lambda : \lambda \in \Lambda \}$ on $A$.
\begin{definition}
	Define the following locally convex space:
	\begin{equation}
	A\{x_1, x_2; \alpha\} = \left\lbrace f = \sum\limits_{w \in W_2} a_w x^w : \left\|f\right\|_{\lambda, \rho} =  \sum\limits_{w \in W_2} \left\|a_w\right\|_\lambda^{(w)}\rho^{|w|} < \infty \  \forall \lambda \in \Lambda, 0 < \rho < \infty  \right\rbrace,
	\end{equation}
	where $\left\|r\right\|_\lambda^{(w)}$ are seminorms on $A$, which are defined as follows:
	\[
	\left\|r\right\|_\lambda^{(w)} = \inf_{r = \sum_{j=1}^k i_w(r_{1, j} \otimes \dots \otimes r_{|w|, j})} \sum_{j=1}^k \left\|r_{1, j}\right\|_\lambda \dots \left\|r_{|w|, j}\right\|_\lambda,
	\]
	and by definition $\left\| \cdot \right\|^{(\emptyset)}_{\lambda} = \left\| \cdot \right\|_\lambda$.
\end{definition}
\noindent
\textbf{Remark.} Actually, $\left\| \cdot \right\|^{(1)}_{\lambda} = \left\| \cdot \right\|^{(2)}_{\lambda} = \left\| \cdot \right\|_\lambda$ due to the definition of $i_w$.

The space $A\{x_1, x_2; \alpha\}$ with the topology, generated by $\left\|\cdot\right\|_{\lambda, \rho}$, is a complete locally convex space.

\begin{theorem}
	\label{thm3.1}
	The space $A\{x_1, x_2; \alpha\}$ admits a unique multiplication which satisfies the following conditions:
	\begin{enumerate}[label=(\arabic*)]
		\item the natural inclusions $A[t; \alpha] \hookrightarrow A\{x_1, x_2; \alpha \}$ and $A[s; \alpha^{-1}] \hookrightarrow A\{x_1, x_2, \alpha \}$, where \newline $\sum a_n t^n \rightarrow \sum a_n x_1^n$ and $\sum a_n s^n \rightarrow \sum a_n x_2^n$, are algebra homomorphisms.
		\item there exists a canonical topological $A$-algebra isomorphism $\psi : A\{x_1, x_2;\alpha\} \rightarrow \widehat{T}_A(A_\alpha \oplus A_{\alpha^{-1}})$.
	\end{enumerate}
	As a corollary, $A\{x_1, x_2; \alpha\}$ becomes an Arens-Michael algebra.
\end{theorem}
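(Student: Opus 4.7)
The plan is to build the candidate isomorphism $\psi$ first as a topological linear isomorphism by decomposing the tensor powers of $M := A_\alpha \oplus A_{\alpha^{-1}}$ according to the words in $W_2$, then to transport the multiplication of $\widehat{T}_A(M)$ back to $A\{x_1, x_2; \alpha\}$ and read off the algebra structure explicitly using Lemma \ref{commd}. Uniqueness then follows from density of formal polynomials and joint continuity.

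\textbf{Step 1 (decomposition).} Distributivity of $\hat{\otimes}_A$ over finite direct sums gives a canonical topological $A$-$\hat{\otimes}$-bimodule decomposition
\[
M^{\hat{\otimes} n} \;\simeq\; \bigoplus_{w \in W_2,\, |w| = n} A_w,
\]
and composition with the isomorphism $i_w \colon A_w \xrightarrow{\sim} A_{\alpha^{c(w)}}$ identifies each $w$-summand with a copy of $A$. Summing over $n$, every element of $\widehat{T}_A(M)$ corresponds uniquely to a collection $(a_w)_{w \in W_2}$ with $a_w \in A$, which I encode as a formal series $\sum_{w} a_w x^w$. This defines $\psi \colon A\{x_1, x_2; \alpha\} \to \widehat{T}_A(M)$ by sending $\sum_w a_w x^w$ to the sequence whose $n$-th entry is the image of $(i_w^{-1}(a_w))_{|w|=n}$ under the decomposition above.

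\textbf{Step 2 (seminorm matching — the main technical point).} Equip $M$ with the generating family of seminorms $\|(a, b)\|_\nu := \|a\|_\lambda + \|b\|_\lambda$. Since the projective $\hat{\otimes}_A$-tensor product distributes over finite direct sums with the $\ell^1$ sum of component seminorms, the seminorm $\|\cdot\|_\nu^{\hat{\otimes} n}$ on $M^{\hat{\otimes} n}$ splits as $\sum_{|w|=n}$ of the projective $A$-tensor seminorms on each $A_w$. By the universal property defining $\hat{\otimes}_A$, this latter seminorm transported through $i_w$ to $A$ is precisely
\[
\inf\Bigl\{\textstyle\sum_j \|r_{1, j}\|_\lambda \cdots \|r_{|w|, j}\|_\lambda : a = \sum_j i_w(r_{1, j} \otimes \cdots \otimes r_{|w|, j})\Bigr\} = \|a\|_\lambda^{(w)}.
\]
Weighting by $\rho^{|w|}$ and summing over $w$ recovers exactly the defining seminorm $\|\cdot\|_{\lambda, \rho}$ of $A\{x_1, x_2; \alpha\}$, so $\psi$ is a topological isomorphism of complete locally convex spaces. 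This identification of the projective cross-seminorm with the explicit infimum is the key obstacle: it relies crucially on $M$ being a \emph{finite} direct sum of Fr\'echet bimodules and on the summand decomposition being orthogonal for $\hat{\otimes}_A$.

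\textbf{Step 3 (multiplication, conditions, uniqueness).} Pull back the multiplication of $\widehat{T}_A(M)$ through $\psi$; this automatically makes $A\{x_1, x_2; \alpha\}$ into an Arens-Michael algebra and yields condition (2) by construction. Concatenation of sequences in $\widehat{T}_A(M)$ sends the $A_{w_1}$-component tensored with the $A_{w_2}$-component into $A_{w_1} \hat{\otimes}_A A_{w_2} = A_{w_1 w_2}$; applying Lemma \ref{commd} gives the explicit formula
\[
(a\, x^{w_1}) \cdot (b\, x^{w_2}) = a\, \alpha^{c(w_1)}(b)\, x^{w_1 w_2}.
\]
Restricting to words containing only the symbol $1$ gives $x_1 a = \alpha(a) x_1$, so $\sum a_n t^n \mapsto \sum a_n x_1^n$ is an algebra homomorphism; restricting to words containing only $2$ gives $x_2 a = \alpha^{-1}(a) x_2$, handling $A[s; \alpha^{-1}]$. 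Thus (1) holds. For uniqueness, any multiplication satisfying (2) is determined by $\psi$; equivalently, the algebraic tensor algebra $T_A(A_\alpha \oplus A_{\alpha^{-1}})$ sits densely inside $\widehat{T}_A(M)$ (so finite sums $\sum a_w x^w$ are dense in $A\{x_1, x_2; \alpha\}$), on which the multiplication is forced by (1) and continuity extends it uniquely to the completion. This completes the proof of the theorem.
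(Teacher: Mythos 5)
Your proposal is correct and follows essentially the same route as the paper: decompose $(A_\alpha \oplus A_{\alpha^{-1}})^{\hat{\otimes} n}$ as $\bigoplus_{|w|=n} A_w$, identify each summand with $A$ via $i_w$ (your $\psi$ agrees with the paper's $a_w \mapsto a_w \otimes 1 \otimes \cdots \otimes 1$), match the projective cross-seminorms with the $\left\|\cdot\right\|^{(w)}_\lambda$ by their very definition, and invoke Lemma \ref{commd} for multiplicativity. The only cosmetic difference is that you make the density/uniqueness step and the explicit product formula $(a\,x^{w_1})(b\,x^{w_2}) = a\,\alpha^{c(w_1)}(b)\,x^{w_1 w_2}$ more explicit than the paper does.
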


\begin{proof}
	Fix a generating directed family of seminorms $\{\left\|\cdot\right\|_\lambda : \lambda \in \Lambda \}$ on $A$. For every $k > 0$ we identify $(A_\alpha \oplus A_{\alpha^{-1}})^{\hat{\otimes} k}$ with $\bigoplus\limits_{|w| = k} A_w$. If we denote the projective tensor product of $k$ copies of $\left\| \cdot \right\|_\lambda + \left\| \cdot \right\|_\lambda$ by $\left\| \cdot \right\|^{\hat{\otimes} n}_{\lambda, \lambda}$, we can rewrite the definition of $\widehat{T}_A(A_\alpha \oplus A_{\alpha^{-1}})$ as follows:
	\[
	\widehat{T}_A(A_\alpha \oplus A_{\alpha^{-1}}) = \left\lbrace  (x_w) \in \prod_{w \in W_2} A_w : \left\| (x_w) \right\|_{\lambda, \rho} =  \sum_{n \ge 0} \left\| (x_w)_{|w| = n} \right\|^{\hat{\otimes} n}_{\lambda, \lambda} \rho^n < \infty, \lambda \in \Lambda, 0 < \rho < \infty \right\rbrace.
	\]
	Moreover, notice that for every $x_w \in A_w$, $\lambda \in \Lambda$ we have 
	\[
	\left\| (x_w)_{|w| = n} \right\|^{\hat{\otimes} n}_{\lambda, \lambda} = \sum_{|w| = n} \left\| i_w(x_w) \right\|^{(w)}_\lambda
	\]
	by the definition of $\left\| \cdot \right\|^{(w)}_{\lambda}$.
	
	For any element $a \in A\{x_1, x_2; \alpha\}$ we define $\psi$ as follows:
	\[
		(\psi(a))_w = a_w \otimes 1 \dots \otimes 1.
	\]
	Therefore, for any $0 < \rho < \infty$ and $\lambda \in \Lambda$ we have
	\[
	\begin{aligned}
		\left\|\psi(f)\right\|_{\lambda, \rho} & = \sum_{n = 0}^\infty \left(  \left\|(a_w \otimes 1 \dots \otimes 1)_{|w| = n} \right\|^{\hat{\otimes} n}_{\lambda} \right) \rho^n = \sum_{n = 0}^\infty \left( \sum_{|w| = n} \left\|i_w(a_w \otimes 1 \dots \otimes 1)\right\|^{(w)}_{\lambda} \right) \rho^n = \\ & = \sum_{n = 0}^\infty \left( \sum_{|w| = n} \left\|a_w\right\|^{(w)}_{\lambda} \right) \rho^n = \left\|f\right\|_{\lambda, \rho}.
	\end{aligned}
	\]
	Therefore, we have proven that $\psi$ is a topological isomorphism of locally convex spaces, and Lemma \ref{commd} ensures that $\psi$ is an algebra homomorphism, and the existence of natural inclusions 
	\[
	T_A(A_\alpha) \hookrightarrow \widehat{T}_A(A_\alpha \oplus A_{\alpha^{-1}}), \quad T_A(A_{{\alpha}^{-1}}) \hookrightarrow \widehat{T}_A(A_\alpha \oplus A_{\alpha^{-1}})
	\] implies (1).
\end{proof}

\begin{corollary}
	\label{secondisom}
	Suppose that $A$ is a Fr\'echet-Arens-Michael algebra and $\alpha$ is an automorphism of $A$. Then 
	\[
	\widehat{L}_A(A_\alpha) \simeq A\{ x_1, x_2; \alpha \} / \overline{(x_1 x_2 - 1, x_2 x_1 - 1)}.
	\]
\end{corollary}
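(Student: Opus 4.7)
The plan is to combine the construction of $\widehat{L}_A(A_\alpha)$ from the proof of Theorem \ref{topolaurent} with the explicit description of $\widehat{T}_A(A_\alpha \oplus A_{\alpha^{-1}})$ obtained in Theorem \ref{thm3.1}. Because $A$ is Fr\'echet-Arens-Michael, the construction of $\widehat{L}_A(A_\alpha)'$ goes through without any outer completion, so that the proof of Theorem \ref{topolaurent} directly presents
\[
\widehat{L}_A(A_\alpha) \;\simeq\; \widehat{T}_A(A_\alpha \oplus A_{\alpha^{-1}}) \,/\, I,
\]
where $I$ is the closed two-sided ideal generated, as $x \in A_\alpha$ and $y \in A_{\alpha^{-1}}$ vary, by the elements $(x,0)\otimes(0,y) - i_1(x\otimes y)$ and $(0,y)\otimes(x,0) - i_2(y\otimes x)$, with $i_1(x\otimes y) = x\alpha(y)$ and $i_2(y\otimes x) = y\alpha^{-1}(x)$ by Proposition \ref{top. twisted bimodule}. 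The whole task therefore reduces to computing the pullback $\psi^{-1}(I)$ under the topological isomorphism $\psi : A\{x_1, x_2; \alpha\} \xrightarrow{\sim} \widehat{T}_A(A_\alpha \oplus A_{\alpha^{-1}})$ of Theorem \ref{thm3.1}.

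The key step is a single balanced-tensor identity. Since the right $A$-action on $A_\alpha$ is given by $x \circ r = x\alpha(r)$, while the left $A$-action on $A_{\alpha^{-1}}$ is standard, one has in $A_\alpha \hat{\otimes}_A A_{\alpha^{-1}}$
\[
x \otimes y \;=\; x \otimes (y \cdot 1) \;=\; (x \circ y) \otimes 1 \;=\; x\alpha(y) \otimes 1.
\]
Recalling that $\psi$ sends $a\, x_1 x_2$ to $a \otimes 1 \in A_{(1,2)}$, the generator $(x,0)\otimes(0,y) - i_1(x\otimes y)$ pulls back to $x\alpha(y)\,(x_1 x_2 - 1)$. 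As $x, y$ vary over $A$ and $\alpha$ is an automorphism, $x\alpha(y)$ covers all of $A$, so this first family of generators translates to the family $\{\, c(x_1 x_2 - 1) : c \in A\, \}$. An entirely symmetric computation in $A_{\alpha^{-1}} \hat{\otimes}_A A_\alpha$, using $y \otimes x = y\alpha^{-1}(x) \otimes 1$, shows that the second family translates to $\{\, c(x_2 x_1 - 1) : c \in A\, \}$.

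Finally, since any two-sided ideal in $A\{x_1, x_2; \alpha\}$ automatically absorbs left multiplication by elements of the subalgebra $A$, the closed two-sided ideal generated by the union of these two parametric families coincides with the closed two-sided ideal generated by $x_1 x_2 - 1$ and $x_2 x_1 - 1$ alone. Hence $\psi$ descends to a topological algebra isomorphism
\[
A\{x_1, x_2; \alpha\} \,/\, \overline{(x_1 x_2 - 1,\; x_2 x_1 - 1)} \;\xrightarrow{\sim}\; \widehat{T}_A(A_\alpha \oplus A_{\alpha^{-1}}) / I \;\simeq\; \widehat{L}_A(A_\alpha),
\]
which is the desired statement. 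The only non-formal step in this argument is the balanced-tensor identity above that collapses each parametric family into a single relation; once that is verified, the rest is bookkeeping on quotients.
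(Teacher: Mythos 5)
Your proposal is correct and follows exactly the route the paper intends: the corollary is stated as an immediate consequence of the construction $\widehat{L}_A(M)\simeq\widehat{T}_A(M\oplus M^{-1})/I$ from the proof of Theorem \ref{topolaurent} together with the isomorphism $\psi$ of Theorem \ref{thm3.1}, and your balanced-tensor identity $x\otimes y = x\alpha(y)\otimes 1$ (resp.\ $y\otimes x = y\alpha^{-1}(x)\otimes 1$) is precisely the computation needed to identify $\psi^{-1}(I)$ with $\overline{(x_1x_2-1,\,x_2x_1-1)}$. The paper leaves these details implicit, so your write-up supplies exactly the missing bookkeeping, correctly.
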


We also provide some examples of explicit computations of $A\{ x, y; \alpha \}$ in the Appendix \ref{appendix A}.

\section{Open questions}
\label{open questions}
\begin{enumerate}
	\item How can we characterize the Arens-Michael envelopes among all Arens-Michael algebras? In particular, is every Arens-Michael algebra isomorphic to the Arens-Michael envelope of an algebra? 
	\item Consider an element $f \in F_2 = \mathbb{C}\left\langle x, y\right\rangle $. Is there a way to determine whether the Arens-Michael envelope of $\widehat{F_2 / (f)}$ is isomorphic to the zero algebra?
	\item Does the Conjecture \ref{conj1} hold for every algebra $A$ and an invertible $R$-bimodule $M$?
	\item There are a lot of interesting algebras for which the Arens-Michael envelopes are yet to be explicitly described. For example, consider the quantum universal enveloping algebra $U_q(\mathfrak{sl}_2)$.
	\begin{definition}
		\label{quantumsl}
		The quantum universal enveloping algebra $U_q(\mathfrak{sl}_2)$ is an associative unital algebra generated by $E$, $F$, $K$, $K^{-1}$ with the following relations:
		\[
		KE = q^2 EK,\ KF = q^{-2} FK,\ [E, F] = \frac{K - K^{-1}}{q - q^{-1}}.
		\]
	\end{definition} 
	When $|q| = 1$, then this algebra can be represented as an iterated Ore extension:
	\[
	U_q(\mathfrak{sl}_2) \simeq \mathbb{C}[K, K^{-1}][F; \alpha_0][E; \alpha_1, \delta],
	\]
	and we have the following result(see \cite{Ped}):
	\begin{theorem}
		Consider $|q| = 1, q \ne -1, 1$.
		Then 
		\[
		\widehat{U_q(\mathfrak{sl}_2)} \simeq \left\lbrace f = \sum\limits_{i, j \in \mathbb{Z}_{\ge 0}, k \in \mathbb{Z}} c_{ijk} K^i F^j E^k : \left\|f\right\|_\rho := \sum\limits_{i,j,k} |c_{ijk}| \rho^{i+j+k} < \infty \ \forall  \rho > 0 \right\rbrace ,
		\] 
		where we endow the space on RHS with multiplication, uniquely defined by the relations in the Definition \ref{quantumsl}.
	\end{theorem}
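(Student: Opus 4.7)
The plan is to exploit the iterated Ore-extension presentation
\[
U_q(\mathfrak{sl}_2) \cong \mathbb{C}[K, K^{-1}][F; \alpha_0][E; \alpha_1, \delta]
\]
and to compute the Arens-Michael envelope one layer at a time, in each step verifying the $m$-localizability hypotheses that allow us to invoke the Ore-extension machinery from \cite{PirkAM}. The hypothesis $|q|=1$ plays a decisive role throughout: it ensures that the scaling automorphisms $K \mapsto q^{\pm 2}K$ act by rotations on each Laurent-seminorm and hence preserve them.

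First I would handle the base case $\widehat{\mathbb{C}[K, K^{-1}]} \cong \mathcal{O}(\mathbb{C}^\times)$, which follows from Corollary \ref{firstisom} with $A = \mathbb{C}$ and $\alpha = \mathrm{id}$ combined with the identification $\widehat{L}_\mathbb{C}(\mathbb{C}_{\mathrm{id}}) \cong \mathcal{O}(\mathbb{C}^\times, \mathbb{C}; \mathrm{id}) = \mathcal{O}(\mathbb{C}^\times)$. Concretely this is $\{\sum_{i \in \mathbb{Z}} c_i K^i : \sum|c_i|\rho^{|i|} < \infty\ \forall \rho > 0\}$ with natural seminorms $\|\cdot\|_\rho$. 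Because $|q|=1$, the map $K \mapsto q^{-2} K$ is an isometry for every $\|\cdot\|_\rho$, so $\hat\alpha_0$ forms an $m$-localizable family on $\mathcal{O}(\mathbb{C}^\times)$. Applying the Arens-Michael envelope theorem for Ore extensions with trivial derivation \cite[Theorem 5.17]{PirkAM} then yields
\[
\widehat{\mathbb{C}[K, K^{-1}][F; \alpha_0]} \cong \Bigl\{\sum_{i \in \mathbb{Z},\, j \ge 0} c_{ij} K^i F^j : \sum|c_{ij}|\rho^{|i|+j} < \infty\ \forall \rho > 0\Bigr\}.
\]

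For the outer Ore extension $R_1[E; \alpha_1, \delta]$ with $R_1 := \mathbb{C}[K,K^{-1}][F;\alpha_0]$, the same argument shows that $\hat\alpha_1$ (sending $K \mapsto q^2K$ and $F \mapsto F$) is an isometry of each $\|\cdot\|_\rho$ on $\widehat{R_1}$. The $\alpha_1$-derivation $\hat\delta$, determined by $\hat\delta(K) = 0$ and $\hat\delta(F) = (K-K^{-1})/(q-q^{-1})$, can be evaluated on the normal-form monomials $K^i F^j$ via the twisted Leibniz rule and gives a bound of the shape $\|\hat\delta(a)\|_\rho \le C_\rho \|a\|_\rho$ with $C_\rho$ depending only on $\rho$ and $q$. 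Hence $\{\hat\alpha_1, \hat\delta\}$ is an $m$-localizable pair on $\widehat{R_1}$, and invoking \cite[Theorem 5.17]{PirkAM} a second time produces exactly the power-series algebra in the statement.

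The hard part is this last step: producing a submultiplicative, $\hat\alpha_1$-invariant family of seminorms on $\widehat{R_1}$ against which $\hat\delta$ is continuous uniformly in the normal-form exponents. This is precisely where $|q|=1$ is essential — the $q$-commutation weights appearing in iterated normal-form reductions are then all of modulus one and do not accumulate, whereas for $|q|\ne 1$ or $q = \pm 1$ those weights either blow up or the relation $[E,F]$ degenerates, and the method genuinely breaks down. Careful bookkeeping of these weights is what singles out the symmetric seminorm $\|\cdot\|_\rho = \sum |c_{ijk}| \rho^{i+j+k}$ rather than a coarser one.
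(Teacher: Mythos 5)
The paper does not actually prove this theorem: it is quoted from \cite{Ped} in the ``Open questions'' section, so there is no internal proof to compare against. Your route --- peeling off the iterated Ore extension $\mathbb{C}[K,K^{-1}][F;\alpha_0][E;\alpha_1,\delta]$ one layer at a time and invoking the localizable-Ore-extension machinery of \cite{PirkAM} at each stage --- is the natural one and does work, but the step you yourself flag as ``the hard part'' is stated too loosely to count as done. The assertion that the $q$-weights ``are all of modulus one and do not accumulate'' is not by itself an argument: a sum of $j$ unimodular numbers can have modulus $j$, and if that happened here $\hat\delta$ would fail to be localizable and the whole method would collapse (this is exactly the failure mode for $|q|\neq 1$). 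What saves you is a geometric-series cancellation that should be made explicit. With $\alpha_1(K)=q^{-2}K$, $\alpha_1(F)=F$, $\delta(K)=0$, $\delta(F)=(K-K^{-1})/(q-q^{-1})$, the twisted Leibniz rule and the normal-ordering $F^lK^{\pm1}=q^{\pm2l}K^{\pm1}F^l$ give
\[
\delta(K^iF^j)=\frac{q^{-2i}}{q-q^{-1}}\left(\frac{q^{2j}-1}{q^2-1}\,K^{i+1}-\frac{q^{-2j}-1}{q^{-2}-1}\,K^{i-1}\right)F^{j-1},
\]
and it is precisely because $|q|=1$ and $q^2\neq 1$ that the two coefficients are bounded by $2/\bigl(|q-q^{-1}|\,|q^2-1|\bigr)$ uniformly in $i$ and $j$; hence $\left\|\delta(a)\right\|_\rho\le C_q\left\|a\right\|_\rho$ for all $\rho\ge 1$ (one must restrict to $\rho\ge 1$ anyway for the seminorms $\sum|c_{ij}|\rho^{|i|+j}$ to be submultiplicative, and this subfamily is cofinal). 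With that computation inserted, $\{\hat\alpha_1,\hat\delta\}$ is an $m$-localizable family on $\widehat{R_1}$ and the second application of \cite[Theorem 5.17]{PirkAM} is legitimate; the first two layers are unproblematic as you describe. Two cosmetic points: the exponent ranges in the printed statement are evidently a typo (it should be $i\in\mathbb{Z}$, $j,k\ge 0$ and $\rho^{|i|+j+k}$), and your argument implicitly uses the corrected version.
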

	When $|q| \ne 1$ this representation becomes useless to us, because the morphisms cease to be $m$-localizable.
	
	In fact, this problem was what motivated the author to tackle the description of the Arens-Michael envelope of Laurent Ore extensions in the general case: consider the following isomorphism:
	\[
	U_q(\mathfrak{sl}_2) \simeq \frac{\mathbb{C}\left< E, F \right>[K, K^{-1}; \alpha] }{([E, F] - \frac{K - K^{-1}}{q - q^{-1}})},
	\]
	where $\alpha(E) = q^2 E$ and $\alpha(F) = q^{-2} F$. Then we use the the main result:
	
	\[
	(\mathbb{C}\left< E, F \right>[K, K^{-1}; \alpha]) \string^ \simeq \widehat{L}_{\mathcal{F}_2} ((\mathcal{F}_2)_\alpha) \simeq \mathcal{F}_2 \{x_1, x_2; \alpha\} / \overline{(x_1 x_2 - 1, x_2 x_1 - 1)}.
	\]
	
	Unfortunately, the algebra $\mathcal{F}_2 \{x, y; \alpha\}$ turned out be too difficult to describe explicitly, the Example \ref{example3} demonstrates the difficulty of the task.
	
\end{enumerate}

\appendix
\section{Several examples of explicit computations of \texorpdfstring{$\widehat{T}_A(A_\alpha)$}{TAalpha} and \texorpdfstring{$\widehat{L}_A(A_\alpha)$}{LAalpha}}
\label{appendix A}
Here we will provide several important examples, which illustrate the complexity of ''extensions'' $\widehat{T}_A(A_\alpha) = A \{ x; \alpha \}, \widehat{T}_A(A_\alpha \oplus A_{\alpha^{-1}}) = A\{x, y; \alpha \}$ (and $\widehat{L}_A(A_\alpha)$ as a corollary) even for the simplest and most natural cases.

We want to consider the case of non-$m$-localizable pairs $\{\alpha, \alpha^{-1} \}$, because the $m$-localizable case has been already treated in Section 3.1.

\begin{lemma}
	\label{themostimportantlemma}
	Consider an Arens-Michael algebra $A$ with topology, generated by a family of seminorms $\{ \left\| \cdot \right\|_\lambda : \lambda \in \Lambda \}$ and $\alpha \in \text{Aut}(A)$. Denote 
	\[
	w_n = \underbrace{(1,1,\dots,1,1,}_{\text{n times}} \underbrace{2,2,\dots,2,2)}_{\text{n times}}, \quad w'_n = \underbrace{(2,2,\dots,2,2,}_{\text{n times}} \underbrace{1,1,\dots,1,1)}_{\text{n times}}.
	\] 
	Suppose that there is an element $r \in A$ such that 
	\begin{equation}
	\label{limit}
		\lim\limits_{n \rightarrow \infty} (\left\| r \right\|^{(w_n)}_\lambda \rho^{2n}) = 0
	\end{equation} 
	for every $\lambda \in \Lambda$ and $\rho > 0$ (in other words, the sequence $(\left\| r \right\|^{(w_n)}_\lambda)$ is rapidly decaying), or 
	\begin{equation}
	\label{limit2}
	\lim\limits_{n \rightarrow \infty} (\left\| r \right\|^{(w'_n)}_\lambda \rho^{2n}) = 0.
	\end{equation}  
	Then $r \in I \subset A \{x_1 , x_2; \alpha \}$, where $I$ is the smallest closed two-sided ideal, which contains $x_1 x_2 - 1$ and $x_2 x_1 - 1$. In particular, if there exists an invertible element $r \in A$, which satisfies \eqref{limit} or \eqref{limit2}, then $\widehat{L}_A(A_\alpha) = 0$. 
\end{lemma}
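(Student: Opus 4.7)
The plan is to show that $r$ lies in $I$ by approximating it by elements of the form $r x_1^n x_2^n$. First, by induction on $n \ge 1$, one verifies that $x_1^n x_2^n - 1 \in I$ via the identity
\[
x_1^n x_2^n - 1 = x_1(x_1^{n-1} x_2^{n-1} - 1) x_2 + (x_1 x_2 - 1),
\]
together with the two-sided ideal property. Multiplying on the left by $r$ yields $r x_1^n x_2^n - r \in I$ for every $n$.

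The next step is to evaluate the seminorm of $r x_1^n x_2^n$ as an element of the power series algebra $A\{x_1, x_2; \alpha\}$. Viewed in the form $\sum_w a_w x^w$, this element has a single nonzero coefficient, namely $a_{w_n} = r$: left multiplication by an element of $A$ does not move past any $x_i$ and hence produces no $\alpha$-twisted contributions at shorter words. Therefore
\[
\|r x_1^n x_2^n\|_{\lambda, \rho} = \|r\|^{(w_n)}_\lambda \rho^{2n},
\]
which by hypothesis \eqref{limit} tends to $0$ for every $\lambda \in \Lambda$ and $\rho > 0$. Consequently $r - r x_1^n x_2^n$ converges to $r$ in $A\{x_1, x_2; \alpha\}$; since each term lies in the closed ideal $I$, we conclude that $r \in I$. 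The case of \eqref{limit2} is entirely analogous, using $r x_2^n x_1^n$ and the generator $x_2 x_1 - 1$ of $I$.

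For the final assertion, if $r$ is invertible in $A$, then $r^{-1} \in A \subset A\{x_1, x_2; \alpha\}$, so $1 = r^{-1} \cdot r \in I$, which forces $I$ to coincide with the entire algebra. By Corollary \ref{secondisom}, this yields $\widehat{L}_A(A_\alpha) \simeq A\{x_1, x_2; \alpha\}/I = 0$. The only subtle step in the argument is the normal-form identification of $r x_1^n x_2^n$ that is used to read off its seminorm; everything else is a routine manipulation with the defining relations and the closedness of $I$.
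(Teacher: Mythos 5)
Your proposal is correct and follows essentially the same route as the paper: approximate $r$ by $r x_1^n x_2^n$, note that $r - r x_1^n x_2^n \in I$ since $x_1^n x_2^n - 1 \in I$, and use the hypothesis to see that $\|r x_1^n x_2^n\|_{\lambda,\rho} = \|r\|^{(w_n)}_\lambda \rho^{2n} \to 0$, so $r$ lies in the closed ideal $I$. Your write-up is in fact slightly more detailed than the paper's (the induction for $x_1^n x_2^n - 1 \in I$ and the invertibility argument for the final assertion are left implicit there).
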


\begin{proof}
	Notice that $x_1^k x_2^k - 1 \in I$ for any $k > 0$, therefore, $r - r x_1^k x_2^k \in I$, but
	\[
	||(r - r x_1^k x_2^k) - r||_{\lambda, \rho} = \left\| r \right\|^{(w_k)}_\lambda \rho^{2k} \xrightarrow[k \rightarrow \infty]{} 0.
	\]
	As we can see, the sequence $r - r x_1^k x_2^k$ converges to $r$ in the topology of $A \{x_1, x_2; \alpha\}$ due to the assumptions in our Lemma, therefore, $r \in \overline{I}$. 
\end{proof}

\begin{example}
	\label{example1}
	Consider $A = C(\mathbb{R})$ and $\alpha(f)(x) = f(x-1)$ for $f \in C(\mathbb{R}), x \in \mathbb{R}$. Recall that the topology on $A$ is generated by the family $\left\|f\right\|_K := \sup\limits_{x \in K} |f(x)|$, where $K \subset \mathbb{R}$ is a compact subset. Notice that instead of all $K$ we could take all the intervals $[-x, x]$ for $x > 0$ or even $[-a_n, a_n]$, where $(a_n)$ is an arbitrary increasing unbounded sequence. 
	
	Let $|w| > 1$. Then we can write down a lower estimate for $\left\|\cdot\right\|^{(w)}_{[-n, n]}$ as follows:
	\[
	\begin{aligned}
	\left\|f\right\|^{(w)}_{[-n, n]} & = \inf_{f = \sum_{j=1}^k i_w(f_{1, j} \otimes \dots \otimes f_{|w|, j})} \sum_{j=1}^k \left\|f_{1, j}\right\|_{[-n, n]} \dots \left\|f_{|w|, j}\right\|_{[-n, n]} = \\ & = \inf_{f = \sum_{j=1}^k i_w(f_{1, j} \otimes \dots \otimes f_{|w|, j})} \sum_{j=1}^k \left\|f_{1, j}\right\|_{[-n, n]} \dots \left\|\alpha^{p(w, |w|-1)} (f_{|w|, j})\right\|_{[-n+p(w, |w|-1), n+p(w, |w|-1)]} \ge \\ & \ge \inf_{f = \sum_{j=1}^k i_w(f_{1, j} \otimes \dots \otimes f_{|w|, j})} \sum_{j=1}^k \left\|f_{1, j}\right\|_{I_n^{(w)}} \dots \left\|\alpha^{p(w, |w|-1)} (f_{|w|, j})\right\|_{I_n^{(w)}} \ge \left\|f\right\|_{I_n^{(w)}},
	\end{aligned}
	\]
	where \[I_n^{(w)} = \bigcap_{i=1}^{|w|-1} [-n+p(w, i), n+p(w, i)]\]
	for $|w| > 1$, and $I_n^{(w)} = [-n,n]$ for $|w| \le 1$.
	
 	Notice that if the intersection is empty, then we say that the respective seminorm is identically zero.
	
	If we denote 
	\[
	k_{\text{min}}(w) =  \begin{cases}
	\min\limits_{1 \le i \le |w|-1} p(w, i) &, |w| > 1,  \\
	0&, |w| \le 1,
	\end{cases}, \quad k_{\text{max}}(w) = \begin{cases}
	\max\limits_{1 \le i \le |w|-1} p(w, i) &, |w| > 1, \\
	0 &, |w| \le 1,
	\end{cases}
	\]
	then 
	\[
	I_n^{(w)} = [-n + k_{\text{max}}(w), n + k_{\text{min}}(w)].
	\]
	
	Now we aim to prove that $\left\|f\right\|^{(w)}_{[-n, n]} = \left\| f\right\| _{I_n^{(w)}}$. Consider the representation 
	\[
	f = \alpha^{k_{\text{max}}(w)}(g) + \alpha^{k_{\text{min}}(w)}(h) + (f - \alpha^{k_{\text{max}}(w)}(g) - \alpha^{k_{\text{min}}(w)}(h)) \text{ for any } g, h \in C(\mathbb{R}).
	\]
	Thus we get an upper estimate:
	\begin{equation}
	\label{lowestimate1}
	\left\|f\right\|^{(w)}_{[-n, n]} \le \inf_{g, h \in C(\mathbb{R})} \left\|g\right\|_{[-n, n]} + \left\|h\right\|_{[-n, n]} + \left\|f - \alpha^{k_{\text{max}}(w)}(g) - \alpha^{k_{\text{min}}(w)}(h)\right\|_{[-n, n]}
	\end{equation}
	Denote the function $\tilde{f} = f - \alpha^{k_{\text{max}}(w)}(g) - \alpha^{k_{\text{min}}(w)}(h)$. Suppose that $-n + k_{\text{max}} \le n + k_{\text{min}}$. Then consider the following $g_m$ and $h_m$:
	\begin{equation}
	g_m(x) = \begin{cases}
	f(x + k_{\text{max}}), & x < -n - 1/m\\
	(-m(x+n)) f(x + k_{\text{max}}) , & x \in [-n - 1/m, -n]\\
	0, & x > -n
	\end{cases},
	\end{equation}
	
	\begin{equation}
	h_m(x) = \begin{cases}
	0, & x < n \\
	(m(x-n)) f(x + k_{\text{min}}) , & x \in [n, n + 1/m]\\
	f(x + k_{\text{min}}), & x > n + 1/m
	\end{cases}
	\end{equation}
	Then we have to look at the RHS of \eqref{lowestimate1}:
	\[
	\left\|g_m\right\|_{[-n, n]} = \left\|h_m\right\|_{[-n, n]} = 0,
	\]
	\begin{equation}
		\alpha^{k_{\text{max}}(w)}(g_m)(x) = \begin{cases}
		f(x), & x < -n - 1/m + k_{\text{max}}(w)\\
		(-m(x - k_{\text{max}}(w) + n)) f(x) , & x \in [-n - 1/m + k_{\text{max}}(w), -n + k_{\text{max}}(w)]\\
		0, & x > -n + k_{\text{max}}(w)
		\end{cases},
	\end{equation}
	
	\begin{equation}
		\alpha^{k_{\text{min}}(w)}(h_m)(x) = \begin{cases}
			0, & x < n + k_{\text{min}}(w) \\
			(m(x - k_{\text{min}}(w) -n)) f(x) , & x \in [n + k_{\text{min}}(w), n + k_{\text{min}}(w) + 1/m]\\
			f(x), & x > n + 1/m + k_{\text{min}}(w)
		\end{cases}
	\end{equation}

	\begin{equation}
			\tilde{f}(x) = \begin{cases}
		0 & x < -n - 1/m + k_{\text{max}}(w)\\
		(1 + m(x - k_{\text{max}}(w)+n)) f(x) & x \in [-n - 1/m + k_{\text{max}}(w), -n + k_{\text{max}}(w)] \\
		f(x) & x \in [-n + k_{\text{max}}(w), n + k_{\text{min}}(w)] = I_n^{(w)} \\
		(1 - m(x - k_{\text{min}}(w) -n)) f(x) & x \in [n + k_{\text{min}}(w), n + 1/m + k_{\text{min}}(w)]\\
		0 & x > n + 1/m + k_{\text{min}}(w),
		\end{cases}
	\end{equation}
	so 
	\[
		\left\|f\right\|^{(w)}_{[-n, n]} = \left\|\tilde{f}\right\|_{[-n, n]} \le \left\|f\right\|_{[-n+k_{\text{max}}(w) - 1/m, n + 1/m + k_{\text{min}}(w)]}.
	\]
	By taking $m \rightarrow 0$ we get the desired equality.
	
	If $-n + k_{\text{max}}(w) > n + k_{\text{min}}(w)$, then we can look at $g_3(x)$ and $h_3(x)$. Notice that \[-n + k_{\text{max}}(w) - 1/3 > n + k_{\text{min}}(w) + 1/3,\] so the computations above show us that the supports of $\alpha^{k_{\text{max}}(w)}(g_3)$ and $\alpha^{k_{\text{min}}(w)}(h_3)$ have the empty intersection, so $\tilde{f} \equiv 0$ for $g_3$ and $h_3$, therefore 
	\[
	\left\|f\right\|^{(w)}_{[-n, n]} = \left\|\tilde{f}\right\|_{[-n, n]} = 0.
	\]
	
	This argument worked for $|w| > 1$, but we know that
	\[
	\left\| \cdot \right\|^{(w)}_{[-n, n]} = \left\| \cdot \right\|_{[-n, n]}
	\]
	when $|w| \le 1$.

	To sum everything up, we have deduced that the algebras $C(\mathbb{R}) \{ x ; \alpha \}$ and $C(\mathbb{R}) \{ x_1, x_2; \alpha \}$ look as follows:
	\[
		C(\mathbb{R}) \{ x ; \alpha \} = \left\lbrace a = \sum_{k \ge 0} a_k x^k : \left\|a\right\|_{n, \rho} :=\left\|a_0\right\|_{[-n, n]} + \sum_{k \ge 1} \left\|a_k\right\|_{[-n+k-1, n-k+1]} \rho^k < \infty,~ \forall n > 0, 0 < \rho < \infty \right\rbrace,
	\]

	\[		
		C(\mathbb{R}) \{ x_1, x_2; \alpha \} = \left\lbrace  a = \sum_{w \in W_2} a_w x^w : \left\|a\right\|_{n, \rho} := \sum_{w \in W_2}  \left\|a_w\right\|_{I_n^{(w)}} \rho^{|w|} < \infty,~ \forall w \in W_2, 0 < \rho< \infty \right\rbrace .
	\]
	It is easily seen that the isomorphism $C(\mathbb{R}) \{ x ; \alpha \} \simeq C(\mathbb{R})[[x]]$ takes place, because $\left\|\cdot\right\|^{(k)}_{[-n, n]} = 0$ for $k > 2n$. Therefore, Lemma \ref{themostimportantlemma} implies that $\widehat{L}_A(A_\alpha) = 0$, because the $(\left\| 1 \right\|_{[-n, n]}^{(w_k)})_{k \in \mathbb{N}} \in c_{00}$.
\end{example}

\begin{example}
	\label{example2}
What happens if we consider the shift automorphism on the algebra of holomorphic functions $\mathcal{O}(\mathbb{C})$ instead of $C(\mathbb{R})$? We get a result, which is similar to what we got in the Example \ref{example1}, as stated in \cite[Example 4.3]{PirkAM}:
\begin{proposition}
	Let $R = \mathcal{O}(\mathbb{C})$. Consider an automorphism $\alpha(f)(z) = f(z-1)$. Then \newline ${R\{x; \alpha \} \cong R[[x]]}$ as locally convex spaces, where the topology on $R[[x]]$ is generated by $\{ \left\|\cdot\right\|_n \}_{n \in \mathbb{N}}$, where $\left\|\sum_k^\infty a_k x^k\right\|_n = \left\|a_n\right\|$.
\end{proposition}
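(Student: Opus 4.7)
The plan is to reduce the proposition to the single vanishing estimate
\[
\|r\|_n^{(k)} \;=\; 0 \quad \text{for every } r \in \mathcal{O}(\mathbb{C}) \text{ and every integer } k > 2n+1.
\]
Once this holds, the defining series $\sum_k \|a_k\|_n^{(k)}\rho^k$ of any would-be element of $R\{x;\alpha\}$ collapses to the finite sum $\sum_{k=0}^{2n+1}\|a_k\|_n^{(k)}\rho^k$, so every formal power series lies in $R\{x;\alpha\}$, yielding the set-theoretic equality $R\{x;\alpha\} = R[[x]]$; moreover each seminorm $\|\cdot\|_{n,\rho}$ becomes a finite combination of coefficient-wise sup-seminorms, which is exactly the topology asserted on $R[[x]]$.

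To prove the vanishing estimate, parametrize one-tensor decompositions through the group of nowhere-vanishing entire functions. For any $h_1,\dots,h_k \in \mathcal{O}(\mathbb{C})$ satisfying the cocycle condition $\sum_{i=1}^{k} h_i(z-i+1) \equiv 0$, the identity
\[
r(z) \;=\; \bigl(r(z)e^{h_1(z)}\bigr)\cdot e^{h_2(z-1)} \cdot e^{h_3(z-2)} \cdots e^{h_k(z-k+1)}
\]
is a legitimate representative in $A_\alpha^{\hat\otimes k}$, and submultiplicativity of the sup norm on $\mathcal{O}(\mathbb{C})$ yields
\[
\|r\|_n^{(k)} \;\le\; \|r\|_n \cdot \exp\!\Biggl(\sum_{i=1}^{k}\sup_{|z|\le n}\mathrm{Re}\,h_i(z)\Biggr).
\]
Now take $h_1(z) = Cz$, $h_k(z) = -C(z+k-1)$, and $h_i \equiv 0$ for $2 \le i \le k-1$, with real $C>0$; the cocycle condition is verified by direct calculation, and the total exponent evaluates to $C(2n+1-k)$. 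For $k > 2n+1$ this is negative and tends to $-\infty$ as $C \to \infty$, forcing $\|r\|_n^{(k)} = 0$.

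With the vanishing in hand, the topological equivalence is bookkeeping. On one side,
\[
\|a\|_{n,\rho} \;=\; \sum_{k=0}^{2n+1}\|a_k\|_n^{(k)}\rho^k \;\le\; \sum_{k=0}^{2n+1}\|a_k\|_n\rho^k,
\]
a finite linear combination of coefficient-wise seminorms, hence continuous in the product topology. In the converse direction, submultiplicativity combined with $\|\alpha^{i}(f)\|_m \le \|f\|_{m+i}$ yields $\|a_n\|_m \le \|a_n\|_{m+n-1}^{(n)} \le \|a\|_{m+n-1,1}$, so the coefficient-wise seminorms on $R[[x]]$ are continuous on $R\{x;\alpha\}$.

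The main obstacle is isolating the geometric source of the vanishing: everything hinges on the fact that the disks $D_i = \{|z-(i-1)|\le n\}$, $i=1,\dots,k$, cease to share a common point once $k > 2n+1$, and only in this regime can an unbounded linear additive cocycle be parked in the endpoints $(h_1,h_k)$ without penalty. When $k \le 2n+1$ the disks do share a point and the harmonic maximum principle pins $\|1\|_n^{(k)} \ge 1$, so the threshold $k = 2n+1$ is sharp.
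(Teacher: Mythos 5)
Your proposal is correct, and it takes a genuinely different route from the one the paper relies on. The paper does not reprove this proposition: it cites \cite[Example 4.3]{PirkAM}, where the key vanishing $\left\|\cdot\right\|^{(k)}_\rho = 0$ for large $k$ is obtained via Mergelyan's approximation theorem. You establish the same vanishing by an explicit, entirely elementary construction: writing $r = i_w\bigl((re^{h_1})\otimes e^{h_2}\otimes\cdots\otimes e^{h_k}\bigr)$ for nowhere-vanishing exponential factors subject to the telescoping condition $\sum_i h_i(z-i+1)\equiv 0$, and then letting the linear weights $h_1(z)=Cz$, $h_k(z)=-C(z+k-1)$ blow up. The computation checks out: the decomposition is a legitimate elementary tensor in $A_\alpha^{\otimes k}$, the cocycle condition holds, and the total exponent $C(2n+1-k)$ is negative precisely when $k>2n+1$, forcing the infimum defining $\left\|r\right\|^{(k)}_n$ to be $0$. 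Your reduction of the topological statement to this vanishing (finitely many nonzero terms in each $\left\|\cdot\right\|_{n,\rho}$, plus the reverse estimate $\left\|a_k\right\|_m \le \left\|a_k\right\|^{(k)}_{m+k-1} \le \left\|a\right\|_{m+k-1,1}$) is also sound. What your approach buys is self-containedness and a sharp threshold: your evaluation-at-a-common-point argument shows $\left\|1\right\|^{(k)}_n \ge 1$ exactly when the translated disks still meet ($k\le 2n+1$), so the cutoff $k=2n+1$ cannot be improved, whereas the Mergelyan route is a black box here and gives a slightly weaker (though equally sufficient) bound. The only cosmetic caveat is that your estimates are phrased for the sup-seminorms $\sup_{|z|\le n}|\cdot|$; since these form a directed generating family of submultiplicative seminorms on $\mathcal{O}(\mathbb{C})$ equivalent to any other such family (and the same bound $\left\|e^{Cz}\right\|_\rho = e^{C\rho}$ holds for the coefficient norms used elsewhere in the paper), this does not affect the resulting topology on $R\{x;\alpha\}$.
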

 The proof cleverly utilizes the Mergelyan's approximation theorem. In particular, A. Yu. Pirkovskii proves that $\left\| \cdot \right\|^{(n+1)}_\rho = 0$ for $n > \lfloor 2\rho \rfloor + 1$, therefore, $\left\| \cdot \right\|^{(w_{n+1})}_\rho \le \left\| \cdot \right\|^{(n+1)}_\rho = 0$, so $\widehat{L}_R(R_\alpha) = 0$, as well. Equivalently, we have
 \[
 (\mathbb{C}[x][y, y^{-1}; \alpha])^{\widehat{}} = 0,
 \]
 where $\alpha(f)(x) = f(x-1)$.
\end{example}

\begin{example}
	\label{example3}
	Let $A = \mathcal{O}(\mathbb{C})$ and consider an automorphism $\beta_q : A \rightarrow A$, $\beta_q(f)(z) = f(qz)$, where $|q| \ne 1$. Fix a generating family of seminorms $\{ \left\|\cdot\right\|_\rho : 0 < \rho < \infty \}$ on $A$, where 
	\[
	\left\|f\right\|_\rho =\left\|  \sum_k f^{(k)} z^k \right\|_\rho := \sum_k |f^{(k)}| \rho^k.
	\]
	
	We want to compute the seminorms $ \left\| \cdot \right\|^{(w)}_\rho $ for a word $w \in W_2$ with $|w| > 1$ and $0 < \lambda < \infty$. Assume that $|q| < 1$. Then for every $f \in A$ we have
	
	\[
	\begin{aligned}
		\left\|f\right\|^{(w)}_\lambda & = \inf_{f = \sum^k_{j=1} i_w(f_{1, j} \otimes \dots \otimes f_{|w|, j})} \sum_{i=1}^k \left\|f_{1, j}\right\|_\lambda \dots \left\|f_{|w|, j}\right\|_\lambda \ge \\ &
		\ge \inf_{f = \sum^k_{j=1} i_w(f_{1, j} \otimes \dots \otimes f_{|w|, j})} \sum_{i=1}^k \left\|f_{1, j}\right\|_{\lambda |q|^{-k_{\text{min}}(w)}} 
		\left\|f_{2, j}\right\|_{\lambda |q|^{p(w, 1)-k_{\text{min}}(w)}}  \dots
		\left\|f_{|w|, j}\right\|_{\lambda |q|^{p(w, |w|-1)-k_{\text{min}}(w)}} \ge \\ & 
		\ge \inf_{f = \sum^k_{j=1} i_w(f_{1, j} \otimes \dots \otimes f_{|w|, j})} \sum_{i=1}^k \left\|f_{1, j}\right\|_{\lambda |q|^{-k_{\text{min}}(w)}} \dots \left\|\beta_q^{p(w, |w|-1)}(f_{|w|, j})\right\|_{\lambda |q|^{-k_{\text{min}}(w)}} \ge \left\|f\right\|_{\lambda |q|^{-k_{\text{min}}(w)}}.
	\end{aligned}
	\]
	However, by considering the representation 
	\[
	f = \beta_q^{k_{\text{min}}(w)}(\beta_q^{-k_{\text{min}}(w)}(f)) = \beta_q^{k_{\text{min}}(w)}(f(q^{-k_{\text{min}}(w)}z))
	\]
	we get
	\[
	\left\| f \right\|^{(w)}_\lambda = \left\| f(q^{-k_{\text{min}}(w)}z) \right\|_\lambda = \left\| f \right\|_{\lambda |q|^{-k_{\text{min}}(w)}}
	\]
	for all $w \in W_2$, and, in conclusion, we have
	\[
	A \{x_1, x_2; \beta_q \} = \left\lbrace f = \sum\limits_{w \in W_2} f_w x^w : \left\|f\right\|_{\lambda, \rho} = \sum\limits_{w \in W_2} (\left\|f_w\right\|_{\lambda |q|^{-k_{\text{min}}(w)}}) \rho^{|w|} < \infty \ \forall 0 < \lambda, \rho < \infty  \right\rbrace.
	\]
	The similar argument in the case when $|q| > 1$ shows that
	\[
	\left\| f \right\|^{(w)}_\lambda = \left\| f \right\|_{\lambda |q|^{-k_{\text{max}}(w)}}
	\]
	for all $w \in W_2$, and
	\[
	A \{x_1, x_2; \beta_q \} = \left\lbrace f = \sum\limits_{w \in W_2} f_w x^w : \left\|f\right\|_{\lambda, \rho} = \sum\limits_{w \in W_2} (\left\|f_w\right\|_{\lambda |q|^{-k_{ \text{max} }(w)}}) \rho^{|w|} < \infty \ \forall 0 < \lambda, \rho < \infty  \right\rbrace.
	\]
	\noindent
	Now, let us instead fix $|q| > 1$. Then, by the definition of $A \{x_1, x_2; \beta_q \}$, we can expand the Taylor series of the coefficients, and write
	\[
	A \{x_1, x_2; \beta_q \} = \left\lbrace f = \sum_{\substack{w \in W_2 \\ m \ge 0}} f^{(m)}_w z^m x^w : \left\| f \right\|_{\lambda, \rho} = \sum_{\substack{w \in W_2 \\ m \ge 0}} |f^{(m)}_w| \lambda^m |q|^{-m k_{ \text{max} }(w) } \rho^{|w|} < \infty \ \forall 0 < \lambda, \rho < \infty \right\rbrace.
	\]
	Let us fix $0 < \lambda, \rho < \infty$. What we are going to do next is to estimate the quotient norms $\left\| f \right\|^{\vee}_{\lambda, \rho}$ with respect to the ideal $I$.
	If we denote
	\[
	w_{k, l} = (\underbrace{(1,1,\dots,1,1,}_{k \text{ times}} \underbrace{2,2,\dots,2,2)}_{l \text{ times}}),
	\]
	then
	\begin{equation}
	\label{maximize}
		k_{\text{max}}(w) \le k_{\text{max}}(w_{c_1(w), c_2(w)}) = \begin{cases}
		c_1(w), & c_2(w) \ne 0, \\
		\max \{c_1(w) - 1, 0\}, & c_2(w) = 0.
		\end{cases}
	\end{equation}
	This will help us to prove the following lemma:
	\begin{lemma}
		\label{first step}
		Let $f \in A \{x_1, x_2; \beta_q \}$. Denote $f^{(m)}_{n_1, n_2} = \sum\limits_{\substack{c_1(w) = n_1 \\ c_2(w) = n_2 }}f^{(m)}_{w}.$ Then
		\begin{equation}
		\label{lemma2firsteq}
			\left\| \sum_{\substack{n_1, n_2 \ge 0 \\ m \ge 0}} f^{(m)}_{n_1, n_2} z^m x_1^{n_1} x_2^{n_2}  \right\|_{\lambda, \rho} \le \left\| f \right\|_{\lambda, \rho}
		\end{equation}
		for every $0 < \lambda, \rho < \infty$. As a corollary, the series belongs to $A \{x_1, x_2; \beta_q \}$.
		
		In particular,
		\begin{equation}
		\label{lemma2seceq}
			\left\| f \right\|^\vee_{\lambda, \rho} \le \left\| \sum_{\substack{n_1, n_2 \ge 0 \\ m \ge 0}} f^{(m)}_{n_1, n_2} z^m x_1^{n_1} x_2^{n_2}  \right\|_{\lambda, \rho}.
		\end{equation}
	\end{lemma}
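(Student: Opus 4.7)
The plan splits neatly into two parts, one for each displayed inequality. For \eqref{lemma2firsteq} I would simply compare both sides using the explicit formula for $\|\cdot\|_{\lambda, \rho}$ on $A\{x_1, x_2; \beta_q\}$ derived immediately above the lemma; for \eqref{lemma2seceq} I would show that the series $g := \sum_{n_1, n_2, m} f^{(m)}_{n_1, n_2} z^m x_1^{n_1} x_2^{n_2}$ represents the same coset as $f$ modulo $\overline{I}$, so that $\|f\|^\vee_{\lambda, \rho} \le \|g\|_{\lambda, \rho}$ follows automatically.

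For the first inequality, the word $x_1^{n_1} x_2^{n_2}$ coincides with $w_{n_1, n_2}$, so expanding the left-hand side gives $\sum_{n_1, n_2, m} |f^{(m)}_{n_1, n_2}| \lambda^m |q|^{-m k_{\max}(w_{n_1, n_2})} \rho^{n_1 + n_2}$. Applying the triangle inequality to $f^{(m)}_{n_1, n_2} = \sum_{c_1(w) = n_1,\, c_2(w) = n_2} f^{(m)}_w$ and then invoking \eqref{maximize} to get $k_{\max}(w) \le k_{\max}(w_{n_1, n_2})$, the bound by $\|f\|_{\lambda, \rho}$ follows. The crucial sign point: since $|q| > 1$, the exponent $-m k_{\max}(\cdot)$ is order-reversing, so the inequality on $k_{\max}$ gives $|q|^{-m k_{\max}(w_{n_1, n_2})} \le |q|^{-m k_{\max}(w)}$, which is the direction required. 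Convergence of $g$ in $A\{x_1, x_2; \beta_q\}$ (the stated corollary) is then immediate, since every seminorm of $g$ is finite.

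For the second inequality, I would verify that for every word $w \in W_2$ the difference $x^w - x_1^{c_1(w)} x_2^{c_2(w)}$ lies in $I$. This is a purely algebraic fact: using the generators $x_1 x_2 - 1, x_2 x_1 - 1 \in I$ one can successively cancel an adjacent $x_2 x_1$ or $x_1 x_2$ pair, eventually moving all $x_1$'s to the left of all $x_2$'s modulo $I$. Truncating $f$ to $|w| \le N$ and $m \le M$ and subtracting the corresponding truncation of $g$ gives an element of $I$; passing to the limit in $\|\cdot\|_{\lambda, \rho}$, which is legal thanks to Step 1 ensuring that the regrouped series converges, yields $f - g \in \overline{I}$. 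Consequently $\|f\|^\vee_{\lambda, \rho} = \|g\|^\vee_{\lambda, \rho} \le \|g\|_{\lambda, \rho}$.

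The main obstacle is less a deep difficulty than a pitfall: one must correctly orient the exponent inequality for $|q| > 1$, and carefully justify the limit step in the second part — since $I$ is only defined as the ideal generated by two finite algebraic elements, the inclusion $f - g \in \overline{I}$ requires both series to converge in the ambient topology, and this is exactly what the finite bound of Step 1 provides.
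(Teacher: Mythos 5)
Your proof of \eqref{lemma2firsteq} is exactly the paper's argument: expand the seminorm of the regrouped series, apply the triangle inequality to $f^{(m)}_{n_1,n_2}$, and use \eqref{maximize} with the correctly oriented estimate $|q|^{-mk_{\max}(w_{n_1,n_2})}\le|q|^{-mk_{\max}(w)}$ for $|q|>1$. Your derivation of \eqref{lemma2seceq} (showing $f-g\in I$ by reducing each $x^w$ to $x_1^{c_1(w)}x_2^{c_2(w)}$ modulo the closed ideal and passing to the limit) is the step the paper leaves implicit under ``in particular,'' and it is correct.
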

	\begin{proof}
		Let us just expand the left seminorm by the definition:
		\[
		\begin{aligned}
		\left\| \sum_{\substack{n_1, n_2 \ge 0 \\ m \ge 0}} f^{(m)}_{n_1, n_2} z^m x_1^{n_1} x_2^{n_2}  \right\|_{\lambda, \rho} &= \sum_{\substack{n_1, n_2 \ge 0 \\ m \ge 0}} |f^{(m)}_{n_1, n_2}| \lambda^m |q|^{-m k_{\text{max}}(w_{n_1, n_2})} \rho^{n_1 + n_2} \le \\ 
		&\le \sum_{\substack{n_1, n_2 \ge 0 \\ m \ge 0}} \sum_{\substack{c_1(w) = n_1 \\ c_2(w) = n_2 }}|f^{(m)}_{w}| \lambda^m |q|^{-m k_{\text{max}}(w_{n_1, n_2})} \rho^{n_1 + n_2} \stackrel{\ref{maximize}}{\le} \\ 
		& \stackrel{\ref{maximize}}{\le} \sum_{\substack{n_1, n_2 \ge 0 \\ m \ge 0}} \sum_{\substack{c_1(w) = n_1 \\ c_2(w) = n_2 }}|f^{(m)}_{w}| \lambda^m |q|^{-m k_{\text{max}}(w)} \rho^{|w|} = \left\| f \right\|_{\lambda, \rho}
		\end{aligned}
		\]
	\end{proof}
\noindent
\textbf{Remark.} We denote $x^n = x_1^n$ if $n \ge 0$ and $x^n = x_2^{-n}$ if $n < 0$.
	\begin{lemma}
		\indent
		\begin{enumerate}
			\item If $|q|^m \le \rho$ then $\left\| a z^m x^{n_1 - n_2} \right\|_{\lambda, \rho} \le \left\| a z^m x_1^{n_1} x_2^{n_2} \right\|_{\lambda, \rho}$ for $n_1, n_2 \ge 0$.
			\item If $\rho < |q|^m \le \rho^2$ and $0 \le n_2 < n_1$ then $\left\| a z^m x_1^{n_1 - n_2 + 1} x_2 \right\|_{\lambda, \rho} \le \left\| a z^m x_1^{n_1} x_2^{n_2} \right\|_{\lambda, \rho}$, and if $0 \le n_1 \le n_2$ then $\left\| a z^m x^{n_1 - n_2} \right\|_{\lambda, \rho} \le \left\| a z^m x_1^{n_1} x_2^{n_2} \right\|_{\lambda, \rho}$.
			\item If $\rho^2 < |q|^m$ then $\left\| a z^m x_1^{n_1} x_2^{n_2} \right\|^\vee_{\lambda, \rho} = 0$ for $n_1, n_2 \ge 0$.
		\end{enumerate}
	\end{lemma}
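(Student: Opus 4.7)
All three claims reduce to manipulations of the closed-form norm
\[
\|a z^m x_1^{n_1} x_2^{n_2}\|_{\lambda,\rho} = |a|\, \lambda^m |q|^{-m k_{\text{max}}(w_{n_1,n_2})} \rho^{n_1+n_2},
\]
read off immediately from Example \ref{example3} by taking $f = a z^m$ and observing that its $w_{n_1, n_2}$-coefficient is the scalar $a$. I would begin by recording the explicit values $k_{\text{max}}(w_{n_1,n_2}) = n_1$ for $n_1, n_2 \ge 1$, $k_{\text{max}}(w_{n_1, 0}) = \max(n_1 - 1, 0)$, $k_{\text{max}}(w_{0, n_2}) = 0$, and $k_{\text{max}}(w_{n_1-n_2+1, 1}) = n_1 - n_2 + 1$, all of which come straight out of the definition of $p(w, \cdot)$.

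Parts (1) and (2) are then direct case analyses. For (1), the main observation is that $|q|^m \le \rho$ together with $m \ge 0$ and $|q| > 1$ forces $\rho \ge 1$, hence also $|q|^m \le \rho^2$. In the generic subcase $n_1 > n_2 \ge 1$ the ratio of the two norms equals $|q|^{m(n_2+1)} \rho^{-2n_2} \le \rho^{1-n_2} \le 1$; the balanced case $n_1 = n_2 \ge 1$ and the symmetric case $n_2 > n_1 \ge 1$ both give $(|q|^m / \rho^2)^{n_1} \le 1$; and $\min(n_1, n_2) = 0$ is trivial because the two sides coincide. Part (2) works in the same way: for $0 \le n_2 < n_1$ with $n_2 \ge 1$ the ratio is $|q|^{m(n_2-1)} \rho^{2-2n_2} \le 1$ by $|q|^m \le \rho^2$; the edge case $n_2 = 0$ yields $|q|^{-2m} \rho^2 < 1$ by the strict bound $\rho < |q|^m$; and the case $0 \le n_1 \le n_2$ again gives $(|q|^m / \rho^2)^{n_1} \le 1$.

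The only genuinely new input is in (3), which exploits the ideal structure. An induction on $N$ starting from $x_1 x_2 - 1 \in I$ shows that $x_1^N x_2^N - 1 \in I$ for every $N \ge 0$, and hence
\[
a z^m x_1^{n_1+N} x_2^{n_2+N} - a z^m x_1^{n_1} x_2^{n_2} = a z^m x_1^{n_1}(x_1^N x_2^N - 1) x_2^{n_2} \in I.
\]
The quotient norm is therefore bounded by
\[
\|a z^m x_1^{n_1+N} x_2^{n_2+N}\|_{\lambda,\rho} = |a| \lambda^m |q|^{-m n_1} \rho^{n_1+n_2} \left( \frac{\rho^2}{|q|^m} \right)^N,
\]
which tends to $0$ as $N \to \infty$ because $\rho^2 < |q|^m$; hence the quotient norm vanishes.

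\textbf{Main obstacle.} Conceptually nothing is difficult; the nuisance in (1) and (2) is the bookkeeping forced by the piecewise values of $k_{\text{max}}(w_{n_1, n_2})$, which requires treating the edge cases $n_1 = 0$, $n_2 = 0$, and $n_1 = n_2$ separately. The one mildly subtle point is that the hypothesis $|q|^m \le \rho$ in (1) looks weaker than the $|q|^m \le \rho^2$ that several of the subcase ratios actually need, but it is automatically upgraded because $m \ge 0$ and $|q| > 1$ force $\rho \ge 1$.
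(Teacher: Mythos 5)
Your proof is correct and takes essentially the same route as the paper: both arguments rest on the closed-form expression $\left\| a z^m x_1^{n_1} x_2^{n_2} \right\|_{\lambda,\rho} = |a|\,\lambda^m |q|^{-m k_{\text{max}}(w_{n_1,n_2})}\rho^{n_1+n_2}$ together with elementary comparisons of exponents (the paper merely packages (1) as an induction and (2) as chains of inequalities, which is cosmetic), and part (3) is the same padding-by-$x_1^N x_2^N$ limit computation. Your explicit remark that $a z^m x_1^{n_1}(x_1^N x_2^N - 1)x_2^{n_2} \in I$, which the paper leaves implicit, is a useful clarification rather than a different method.
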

	\begin{proof}
		1. Let us prove this inequality by induction. The base case ($k > 0$):
		\[
		\left\| a z^m x_1^k x_2 \right\|_{\lambda, \rho} = |a| \lambda^m |q|^{-m k} \rho^{k+1} \ge |a| \lambda^m |q|^{-m \max \{ k-2, 0 \} } \rho^{k-1} = \left\| a z^m x_1^{k-1} \right\|_{\lambda, \rho}
		\]
		\[
		\left\| a z^m x_1 x_2^k \right\|_{\lambda, \rho} = |a| \lambda^m |q|^{-m} \rho^{k+1} \ge |a| \lambda^m \rho^{k-1} = \left\| a z^m x_2^{k-1} \right\|_{\lambda, \rho}.
		\]
		The step of induction ($n_1, n_2 > 1$):
		\[
		\left\| a z^m x_1^{n_1} x_2^{n_2} \right\|_{\lambda, \rho} = |a| \lambda^m |q|^{-m n_1} \rho^{n_1 + n_2} \ge |a| \lambda^m |q|^{-m (n_1 - 1)} \rho^{n_1 + n_2 - 2} = \left\| a z^m x_1^{n_1-1} x_2^{n_2-1} \right\|_{\lambda, \rho}.
		\]
		2. Notice that in this case for $k > 0$ we have the following chains of inequalities:
		\[
		\equalto{\left\| a z^m x_1^k \right\|_{\lambda, \rho}}{|a|\lambda^m |q|^{-m(k-1)} \rho^{k}} > \equalto{\left\| a z^m x_1^{k+1} x_2 \right\|_{\lambda, \rho}}{|a|\lambda^m |q|^{-m(k+1)} \rho^{k+2}} \le \equalto{\left\| a z^m x_1^{k+2} x_2^2 \right\|_{\lambda, \rho}}{|a|\lambda^m |q|^{-m(k+2)} \rho^{k+4}} \le \dots
		\]
		\[
		\equalto{\left\| a z^m x_2^{k-1} \right\|_{\lambda, \rho}}{|a|\lambda^m \rho^{k-1} } \le \equalto{\left\| a z^m x_1 x_2^{k} \right\|_{\lambda, \rho}}{|a|\lambda^m |q|^{-m} \rho^{k+1}} \le \equalto{\left\| a z^m x_1^2 x_2^{k+1} \right\|_{\lambda, \rho}}{|a| \lambda^m |q|^{-2m} \rho^{k+3}} \le \dots.
		\]
		3. Suppose that $|q|^m > \rho^2$. Then we consider the following limit:
		\[
		\lim_{l \rightarrow \infty} \left\| a z^m x_1^{n_1 + l} x_2^{n_2 + l} \right\|_{\lambda, \rho} = \lim_{l \rightarrow \infty} |a| \lambda^m |q|^{-m (n_1 + l)} \rho^{n_1 + n_2 + 2l} = \left\| a z^m x_1^{n_1} x_2^{n_2} \right\|_{\lambda, \rho} \lim_{l \rightarrow \infty} \left( \frac{\rho^{2}}{|q|^m}\right)^l = 0.
		\]
	\end{proof}
	As a simple corollary we get the following lemma: 
	\begin{lemma}
		\label{second step}
		Consider $g = \sum\limits_{\substack{n_1, n_2 \ge 0 \\ m \ge 0}} g_{n_1, n_2}^{(m)} z^m x_1^{n_1} x_2^{n_2} \in A \{x_1, x_2; \beta_q \}$. Denote $g^{(m)}_n = \sum\limits_{n_1 - n_2 = n} g^{(m)}_{n_1, n_2}$.
		\begin{enumerate}
			\item 
			\begin{equation}
			\label{lemma4firsteq}
				\left\| \sum_{\substack{n \in \mathbb{Z} \\ |q|^m \le \rho}} g^{(m)}_n z^m x_1^n + \sum_{\substack{n > 0 \\ \rho < |q|^m \le \rho^2}} g^{(m)}_n z^m x_1^{n+1} x_2 + \sum_{\substack{n \le 0 \\ \rho < |q|^m \le \rho^2}} g^{(m)}_n z^m x^n_1 \right\|_{\lambda, \rho} \le \left\| g \right\|_{\lambda, \rho}
			\end{equation}		
			for every $0 < \lambda, \rho < \infty$.
			\item
			\begin{equation}
			\label{lemma4seceq}
				\left\| g \right\|^{\vee}_{\lambda, \rho} \le \left\|  \sum_{\substack{n \in \mathbb{Z} \\ |q|^m \le \rho}} g^{(m)}_n z^m x_1^n + \sum_{\substack{n > 0 \\ \rho < |q|^m \le \rho^2}} g^{(m)}_n z^m x_1^{n+1} x_2 + \sum_{\substack{n \le 0 \\ \rho < |q|^m \le \rho^2}} g^{(m)}_n z^m x^n_1 \right\|_{\lambda, \rho}
			\end{equation}
		\end{enumerate}
	\end{lemma}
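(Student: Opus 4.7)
\noindent
The plan is to split $g = G_1 + G_2 + G_3 + G_4$ according to the regime of $|q|^m$ and the sign of $n := n_1 - n_2$: $G_1$ collects the monomials with $|q|^m \le \rho$; $G_2$ (resp.\ $G_3$) collects those with $\rho < |q|^m \le \rho^2$ and $n > 0$ (resp.\ $n \le 0$); and $G_4$ collects those with $|q|^m > \rho^2$. The three cases of the preceding lemma were designed exactly to govern these four pieces.

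For inequality \eqref{lemma4firsteq}, I would first expand the left-hand norm via its definition, apply $|g^{(m)}_n| \le \sum_{n_1 - n_2 = n} |g^{(m)}_{n_1, n_2}|$ together with the triangle inequality, and then bound each resulting summand by the appropriate case of the preceding lemma: part~(1) handles the $|q|^m \le \rho$ contribution, while the two sub-cases of part~(2) handle the $\rho < |q|^m \le \rho^2$ contributions for $n > 0$ and $n \le 0$ separately. Monomials of $g$ with $|q|^m > \rho^2$ do not appear on the left-hand side, so their contribution to $\|g\|_{\lambda, \rho}$ merely provides additional slack. Summing the bounds reproduces $\|g\|_{\lambda, \rho}$ and confirms along the way that the reduced series lies in $A\{x_1, x_2; \beta_q\}$.

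For inequality \eqref{lemma4seceq}, the key observation is that modulo the two-sided ideal $I$ generated by $x_1 x_2 - 1$ and $x_2 x_1 - 1$ we have $x_1^k x_2^k \equiv 1 \equiv x_2^k x_1^k$ for every $k \ge 1$, which implies
\[
  x_1^{n_1} x_2^{n_2} \equiv x^{n_1 - n_2} \pmod{I} \quad \text{and} \quad x_1^{n_1} x_2^{n_2} \equiv x_1^{n_1 - n_2 + 1} x_2 \pmod{I} \quad (n_1 > n_2 \ge 0).
\]
After regrouping each of $G_1, G_2, G_3$ by $n = n_1 - n_2$ and applying the appropriate congruence termwise, the resulting differences form series whose partial sums lie in $I$ and which converge in $A\{x_1, x_2; \beta_q\}$ thanks to Part~(1); hence those differences themselves lie in $\overline{I}$, and $G_1 + G_2 + G_3$ coincides modulo $\overline{I}$ with the right-hand side of \eqref{lemma4firsteq}.

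The main obstacle is to dispose of $G_4$, which is absent from the right-hand side of \eqref{lemma4seceq}; the plan is to show $\|G_4\|^\vee_{\lambda, \rho} = 0$. Inserting $x_1^l x_2^l$ inside each monomial of $G_4$ and using $x_1^l x_2^l \equiv 1 \pmod{I}$ replaces $G_4$ modulo $\overline{I}$ by
\[
  \sum_{\substack{|q|^m > \rho^2 \\ n_1, n_2 \ge 0}} g^{(m)}_{n_1, n_2} z^m x_1^{n_1 + l} x_2^{n_2 + l},
\]
whose $\|\cdot\|_{\lambda, \rho}$-norm equals the summands of $\|G_4\|_{\lambda, \rho}$ each multiplied by $(\rho^2/|q|^m)^l$. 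Since this factor is bounded by $1$ on the summation range and tends to $0$ pointwise as $l \to \infty$, dominated convergence against the convergent majorant provided by $\|g\|_{\lambda, \rho}$ forces the norm to $0$, giving $\|G_4\|^\vee_{\lambda, \rho} = 0$. Subadditivity of the quotient seminorm then combines the four pieces to yield \eqref{lemma4seceq}.
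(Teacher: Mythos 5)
Your argument is correct and matches the paper's intent: the paper states this lemma without proof, as ``a simple corollary'' of the preceding three-case lemma, and your write-up (splitting $g$ by the regime of $|q|^m$ and the sign of $n_1-n_2$, applying the three cases monomial-by-monomial for \eqref{lemma4firsteq}, and killing the $|q|^m>\rho^2$ block modulo $\overline{I}$ via the $x_1^l x_2^l$ insertion for \eqref{lemma4seceq}) is exactly the intended deduction, with the convergence details properly supplied.
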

	\begin{proposition}
		Suppose that $f \in A \{x_1, x_2; \beta_q \}$. Then
		\begin{enumerate}[label=(\arabic*)]
			\item The sum $\sum\limits_{c(w) = n} f^{(m)}_w$ converges for every $m \ge 0$ and $n \in \mathbb{Z}$. Moreover, the linear functionals $\varphi_{m, n} : f \rightarrow \sum\limits_{c(w) = n} f^{(m)}_w$ are continuous.
			\item If these sums are equal to zero for every $m \ge 0$ and $n \in \mathbb{Z}$, then $f \in I$. In other words, $\bigcap_{m, n} \text{Ker}(\varphi_{m, n}) \subset I$.
			\item $I = \bigcap_{m, n} \text{Ker}(\varphi_{m, n})$.
		\end{enumerate}
	\end{proposition}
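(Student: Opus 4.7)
The plan is to prove (1), (2), (3) in order, using Lemmas~\ref{first step} and~\ref{second step} as the principal tools for (2) and a direct combinatorial calculation for (3). For part (1), I would extract both absolute convergence and continuity from a single coefficient estimate. The definition of $\|\cdot\|_{\lambda,\rho}$ yields $|f^{(m)}_w| \le \|f\|_{\lambda,\rho}\, \lambda^{-m} |q|^{m k_{\max}(w)} \rho^{-|w|}$. Restricting to words with $c(w) = n \ge 0$ and $c_2(w) = k$, one has $c_1(w) = n+k$, $|w| = n+2k$, $k_{\max}(w) \le n+k$, and there are $\binom{n+2k}{k}$ such words, so
\[
\sum_{c(w) = n} |f^{(m)}_w| \le \|f\|_{\lambda,\rho}\, \lambda^{-m} |q|^{mn} \rho^{-n} \sum_{k \ge 0} \binom{n+2k}{k} \left(\frac{|q|^m}{\rho^2}\right)^{k},
\]
which converges whenever $\rho^2 > 4|q|^m$, since $\binom{n+2k}{k} = O(4^k)$ by Stirling. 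The case $n < 0$ is symmetric. This establishes absolute convergence and the continuity estimate $|\varphi_{m,n}(f)| \le C_{m,n,\lambda,\rho} \|f\|_{\lambda,\rho}$.

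For (2), I would chain the two lemmas. Set $g = \sum f^{(m)}_{n_1,n_2}\, z^m x_1^{n_1} x_2^{n_2}$, and, for each $\lambda, \rho$, let $h_{\lambda,\rho}$ be the reduced element of~\eqref{lemma4firsteq} applied to $g$; by construction $g^{(m)}_n = \varphi_{m,n}(f)$. Both Lemmas~\ref{first step} and~\ref{second step} effectively assert that $f \equiv g \equiv h_{\lambda,\rho} \pmod{I}$ (the statements are phrased as quotient-norm inequalities, but the ``in particular'' steps~\eqref{lemma2seceq} and~\eqref{lemma4seceq} are justified through an approximation argument analogous to Lemma~\ref{themostimportantlemma}). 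Consequently $\|f\|^\vee_{\lambda,\rho} \le \|h_{\lambda,\rho}\|_{\lambda,\rho}$. Under the hypothesis $\varphi_{m,n}(f) = 0$ for all $m, n$, every $g^{(m)}_n$ vanishes, so $h_{\lambda,\rho} = 0$ for all $\lambda, \rho$ and thus $\|f\|^\vee_{\lambda,\rho} = 0$ in every generating seminorm; since $I$ is closed, $f \in I$.

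For (3), each $\varphi_{m,n}$ is continuous by (1), so $\ker \varphi_{m,n}$ is closed, and polynomials are dense in $A\{x_1,x_2;\beta_q\}$. It therefore suffices to check $\varphi_{m,n}(a(x_1 x_2 - 1) b) = \varphi_{m,n}(a(x_2 x_1 - 1) b) = 0$ for basis monomials $a = z^{m_1} x^{w_1}$, $b = z^{m_2} x^{w_2}$. Using the commutation rule $x^w z^k = q^{c(w) k} z^k x^w$ one computes
\[
a(x_1 x_2 - 1) b = q^{c(w_1) m_2}\, z^{m_1 + m_2}\, \bigl(x^{w_1 (1,2) w_2} - x^{w_1 w_2}\bigr),
\]
and since $c(w_1 (1,2) w_2) = c(w_1 w_2)$ the two words contribute canceling $\pm q^{c(w_1) m_2}$ to $\varphi_{m,n}$; the same cancellation handles the generator $x_2 x_1 - 1$. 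Hence $I \subset \bigcap_{m,n} \ker \varphi_{m,n}$, which combined with (2) yields (3).

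The main obstacle is the ``$\pmod I$'' identification used in (2): the bare quotient-norm inequalities of Lemmas~\ref{first step} and~\ref{second step} do not chain by themselves, so one has to inspect their proofs (or repeat the approximation trick from Lemma~\ref{themostimportantlemma}, writing $f - g$ and $g - h_{\lambda,\rho}$ as norm-convergent series of explicit elements of $I$) to conclude the true chain $f \equiv h_{\lambda,\rho} \pmod I$. Once this is in place, the rest of the argument reduces to the counting estimate in (1) and the short algebraic computation in (3).
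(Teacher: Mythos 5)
Your proposal is correct and follows essentially the same route as the paper: a direct seminorm estimate for (1), chaining Lemmas \ref{first step} and \ref{second step} (via the mod-$I$ identification you rightly flag as the delicate point) for (2), and checking that the $\varphi_{m,n}$ annihilate the ideal for (3). The only differences are cosmetic: for (1) the paper avoids your word-counting and binomial estimate by simply noting that $k_{\text{max}}(w)\le|w|$ gives $|\varphi_{m,n}(f)|\le\sum_{c(w)=n}|f^{(m)}_w|\,|q|^{-mk_{\text{max}}(w)}\rho^{|w|}\le\|f\|_{1,\rho}$ whenever $\rho>|q|^m$, and for (3) it proves that $\bigcap_{m,n}\ker\varphi_{m,n}$ is a closed two-sided ideal containing $x_1x_2-1$ and $x_2x_1-1$, which is equivalent to your computation of $\varphi_{m,n}$ on the dense spanning set $a(x_1x_2-1)b$, $a(x_2x_1-1)b$ with $a,b$ monomials.
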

	\begin{proof}
		(1) Choose $\rho \in \mathbb{R}$ such that $|q|^m < \rho$. Then
		\[
		|\varphi_{m, n}(f)| \le \sum\limits_{c(w) = n} |f^{(m)}_w| < \sum\limits_{c(w) = n} |f^{(m)}_w| |q|^{-m k_{\text{max}}(w)} \rho^{|w|} \le ||f||_{1, \rho}.
		\]
		We used the obvious inequality $k_{\text{max}}(w) \le |w|$.\\
		(2) Suppose that $f \in \bigcap_{m, n} \text{Ker}(\varphi_{m, n})$. Then due to the Lemma \ref{second step} we have 
		\[
		\left\| f \right\|^{\vee}_{\lambda, \rho} \le \left\| \sum_{\substack{n \in \mathbb{Z} \\ |q|^m \le \rho^2}} f^{(m)}_n z^m x^n \right\|_{\lambda, \rho} = \left\| \sum_{\substack{n \in \mathbb{Z} \\ |q|^m \le \rho^2}} \varphi_{m, n}(f) z^m x^n \right\|_{\lambda, \rho} = 0
		\]
		for every $0 < \lambda, \rho < \infty$. Therefore, $f \in I$.\\
		(3) We will show that $\text{Ker}(\lambda_{m, n})$ is a two-sided ideal. Consider $f \in \bigcap_{m, n} \text{Ker}(\lambda_{m, n})$ and $g \in A \{x_1, x_2; \beta_q \}$. Then
		\[
		\begin{aligned}
		fg = \left( \sum_{m_1, w_1} f^{(m_1)}_{w_1} z^{m_1} x^{w_1} \right) \left( \sum_{m_2, w_2} g^{(m_2)}_{w_2} z^{m_2} x^{w_2} \right) = \sum_{\substack{m_1, w_1 \\ m_2, w_2}} f^{(m_1)}_{w_1} g^{(m_2)}_{w_2} q^{c(w_1)} z^{m_1 + m_2}x^{w_1 w_2}.
		\end{aligned}
		\]
		Therefore, $(fg)^{(m)}_w = \sum\limits_{\substack{m_1 + m_2 = m \\ w_1 w_2 = w }} q^{c(w_1)} f^{(m_1)}_{w_1} g^{(m_2)}_{w_2}$, and
		\[
		\begin{aligned}
		\sum_{c(w) = n} (fg)^{(m)}_w = & \sum_{\substack{m_1 + m_2 = m \\ c(w_1) + c(w_2) = n }} q^{c(w_1)} f^{(m_1)}_{w_1} g^{(m_2)}_{w_2} = \sum_{\substack{m_1 + m_2 = m \\ c(w_1) + c(w_2) = n }} q^{n - c(w_2)} g^{(m_2)}_{w_2} f^{(m_1)}_{w_1} = \\ & = \sum_{\substack{0 \le m_2 \le m \\ w_2 \in W_2}} q^{n - c(w_2)} g_{w_2}^{(m_2)} \left( \sum_{c(w_1) = n - c(w_2)} f^{(m - m_2)}_{w_1} \right)  = 0.
		\end{aligned}
		\]
		Similarly, $\bigcap_{m, n} \text{Ker}(\lambda_{m, n})$ can be shown to be a left ideal. Obviously, $x_1 x_2 - 1 \in \bigcap_{m, n} \text{Ker}(\varphi_{m, n})$ and $x_2 x_1 - 1 \in \bigcap_{m, n} \text{Ker}(\varphi_{m, n})$, therefore $I \subset \bigcap_{m, n} \text{Ker}(\varphi_{m, n})$.
	\end{proof}
	Let us denote $\varphi_{m, n}(f) := f^{(m)}_n$.
	
	This characterization of the ideal $I$ allows us to explicitly compute $\left\| f \right\|^\vee_{\lambda, \rho}$ for any $f \in A \{x_1, x_2; \beta_q \}$.
	\begin{lemma}
		For every $f \in A\{ x_1, x_2 ; \beta_q \}$, and $0 < \lambda, \rho < \infty$ we have the following equality:
		\[
		\left\| f \right\|^\vee_{\lambda, \rho} = \left\| \sum_{\substack{n \in \mathbb{Z} \\ |q|^m \le \rho}} f^{(m)}_n z^m x^n + \sum_{\substack{n > 0 \\ \rho < |q|^m \le \rho^2}} f^{(m)}_{n-1} z^m x_1^n x_2 + \sum_{\substack{n \le 0 \\ \rho < |q|^m \le \rho^2}} f^{(m)}_{n} z^m x_2^n \right\|_{\lambda, \rho}.
		\]
	\end{lemma}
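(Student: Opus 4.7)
Let $\tilde f$ denote the element appearing on the right-hand side of the claimed equality. My plan is to prove the two inequalities $\|\tilde f\|_{\lambda,\rho} \le \|f\|^\vee_{\lambda,\rho}$ and $\|f\|^\vee_{\lambda,\rho} \le \|\tilde f\|_{\lambda,\rho}$ separately. The first follows from successively applying Lemmas \ref{first step} and \ref{second step} to an arbitrary representative $f + h$ with $h \in I$: those lemmas replace $f+h$ first by its symmetrised form and then by its contracted form, both of norm at most $\|f + h\|_{\lambda,\rho}$. The contracted form depends on $f + h$ only through the sums $\sum_{c(w)=n}(f+h)^{(m)}_w = \varphi_{m,n}(f+h)$; since $h \in I \subset \bigcap_{m,n}\text{Ker}(\varphi_{m,n})$ by the preceding proposition, $\varphi_{m,n}(f+h) = \varphi_{m,n}(f) = f^{(m)}_n$, so the contracted form coincides with $\tilde f$ independently of $h$. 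Taking the infimum over $h \in I$ then yields $\|\tilde f\|_{\lambda,\rho} \le \|f\|^\vee_{\lambda,\rho}$.

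For the reverse inequality, I would construct for each integer $K \ge 1$ an element $\tilde f_K \in A\{x_1, x_2; \beta_q\}$ by augmenting $\tilde f$ with ``asymptotic'' monomials covering the regime $|q|^m > \rho^2$ absent from $\tilde f$:
\[
\tilde f_K \;=\; \tilde f \;+\; \sum_{\substack{n \ge 0 \\ |q|^m > \rho^2}} f^{(m)}_n\, z^m\, x_1^{n+K} x_2^{K} \;+\; \sum_{\substack{n < 0 \\ |q|^m > \rho^2}} f^{(m)}_n\, z^m\, x_1^{K} x_2^{-n+K}.
\]
Each appended monomial $x_1^{a} x_2^{b}$ satisfies $a - b = n$, so it represents the same class as $x^n$ in the quotient $\widehat{L}_A(A_{\beta_q})$; consequently $\varphi_{m,n}(\tilde f_K) = f^{(m)}_n = \varphi_{m,n}(f)$ for every $m, n$. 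By the characterisation $I = \bigcap_{m,n} \text{Ker}(\varphi_{m,n})$ from the preceding proposition, $f - \tilde f_K \in I$, which gives $\|f\|^\vee_{\lambda,\rho} \le \|\tilde f_K\|_{\lambda,\rho}$.

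The closing step is to verify that $\|\tilde f_K\|_{\lambda,\rho} \to \|\tilde f\|_{\lambda,\rho}$ as $K \to \infty$. Since the appended terms are supported on $z^m$-indices with $|q|^m > \rho^2$, disjoint from the support of $\tilde f$, the norm splits and each appended summand equals $|f^{(m)}_n|\, \lambda^m\, |q|^{-\max(n,0)m}\, \rho^{|n|}\, (\rho^2/|q|^m)^{K}$, which tends to $0$ pointwise because $\rho^2/|q|^m < 1$ in this regime. I expect the chief obstacle to be a dominated-convergence argument for this infinite series: one would dominate each summand uniformly in $K$ by its value at $K = 1$ and, if needed, invoke the continuity estimate $|\varphi_{m,n}(f)| \le \|f\|_{1, \rho'}$ from the preceding proposition with a suitable auxiliary radius $\rho' > \rho$ so that $\|f\|_{\lambda,\rho'} < \infty$ majorises the series; the bookkeeping relating $\max(n,0)$ with $k_{\text{max}}(w)$ is what makes this nontrivial. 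Once the interchange of limit and sum is justified, combining both inequalities produces the asserted equality.
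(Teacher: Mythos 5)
Your first inequality, $\left\| \tilde f \right\|_{\lambda,\rho}\le\left\| f \right\|^{\vee}_{\lambda,\rho}$, is exactly the paper's argument: apply the norm-decreasing halves \eqref{lemma2firsteq} and \eqref{lemma4firsteq} to an arbitrary representative $f+h$, note that the resulting contracted element depends on $f+h$ only through $\varphi_{m,n}(f+h)=\varphi_{m,n}(f)$, and take the infimum over $h\in I$. That half is correct.

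The second half has a genuine gap: the element $\tilde f_K$ need not belong to $A\{x_1,x_2;\beta_q\}$, and indeed $\left\|\tilde f_K\right\|_{\lambda,\rho}$ can already be infinite at the very pair $(\lambda,\rho)$ under consideration, so the inequality $\left\| f\right\|^{\vee}_{\lambda,\rho}\le\left\|\tilde f_K\right\|_{\lambda,\rho}$ does not get off the ground. The reason is that $|f^{(m)}_n|=|\varphi_{m,n}(f)|$ may grow faster than any exponential in $m$, while for fixed $K$ the weight attached to $z^m x_1^{n+K}x_2^{K}$ supplies only the exponential decay $|q|^{-m(n+K)}$. Concretely, take $f=\sum_{m\ge 1}|q|^{m^3/2}\,z^m\,x_1^{m^2}x_2^{m^2}$: then $\left\| f\right\|_{\lambda,\rho}=\sum_m |q|^{-m^3/2}\lambda^m\rho^{2m^2}<\infty$ for all $\lambda,\rho$, so $f\in A\{x_1,x_2;\beta_q\}$, yet $f^{(m)}_0=|q|^{m^3/2}$ and the appended series $\sum_{|q|^m>\rho^2}|f^{(m)}_0|\,\lambda^m|q|^{-mK}\rho^{2K}$ diverges for every $K$. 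This also shows that the dominated-convergence step you flag as the chief obstacle cannot be completed as proposed: the intended majorant (the $K=1$ series) diverges, and the continuity bound $|\varphi_{m,n}(f)|\le\left\| f\right\|_{1,\rho'}$ is only valid for $\rho'>|q|^m$, i.e.\ for a radius that must grow with $m$, so it cannot yield a summable majorant uniform in $m$. Finally, the inequality you are reproving here, $\left\| f\right\|^{\vee}_{\lambda,\rho}\le\left\|\tilde f\right\|_{\lambda,\rho}$, is precisely the one the paper's proof treats as ``already known'': it follows from $f-\mathrm{sym}(f)\in I$ together with \eqref{lemma4seceq} applied to the symmetrized element, whose proof disposes of the regime $|q|^m>\rho^2$ one monomial at a time via part~(3) of the unlabelled three-case lemma (each such monomial has quotient seminorm zero at the given $(\lambda,\rho)$), rather than through a single global perturbation $\tilde f_K$. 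Replacing your second half by that observation closes the gap.
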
 
	\begin{proof}
	We already know the inequality in one direction, now let $g \in I$.
	\[
	\begin{aligned}
		& \left\| f + g \right\|_{\lambda, \rho} \stackrel{\ref{first step}}{\ge} \left\| \sum_{\substack{n_1, n_2 \ge 0 \\ m \ge 0}}  (f^{(m)}_{n_1, n_2} + g^{(m)}_{n_1, n_2}) z^m x_1^{n_1} x_2^{n_2} \right\|_{\lambda, \rho} \stackrel{\ref{second step}}{\ge}
		\\ &  \stackrel{\ref{second step}}{\ge}  \left\| \sum_{\substack{n \in \mathbb{Z} \\ |q|^m \le \rho }}  (f^{(m)}_n + g^{(m)}_n) z^m x^n  + \sum_{\substack{n > 0 \\ \rho < |q|^m \le \rho^2}} (f^{(m)}_{n-1} + g^{(m)}_{n-1}) z^m x_1^n x_2  + \sum_{\substack{n \le 0 \\ \rho < |q|^m \le \rho^2}} (f^{(m)}_n + g^{(m)}_n) z^m x_2^n \right\|_{\lambda, \rho} = 
		\\ & = \left\| \sum_{\substack{n \in \mathbb{Z} \\ |q|^m \le \rho }}  f^{(m)}_n z^m x^n + \sum_{\substack{n > 0 \\ \rho < |q|^m \le \rho^2}} f^{(m)}_{n-1} z^m x_1^n x_2  \sum_{\substack{n \le 0 \\ \rho < |q|^m \le \rho^2}} f^{(m)}_n z^m x_2^n \right\|_{\lambda, \rho}.
	\end{aligned}
	\] 
	\end{proof}
	\begin{proposition}
		\[
		\widehat{L}_A(A_\alpha) \simeq \left\lbrace f = \sum_{\substack{m \ge 0 \\ n \in \mathbb{Z}}} f^{(m)}_n z^m x^n : \left\| f \right\|'_{\lambda, \rho} < \infty \ \forall 0 < \lambda, \rho < \infty   \right\rbrace,
		\]
		where 
		\[\left\|f \right\|'_{\lambda, \rho} = \sum_{\substack{n \in \mathbb{Z} \\ |q|^m \le \rho}} |f^{(m)}_n| \lambda^m |q|^{-m \max\{n - 1, 0\} }  \rho^n + \sum_{\substack{n > 0 \\ \rho < |q|^m \le \rho^2}} |f^{(m)}_{n-1}| \lambda^m |q|^{-m (n-1)} \rho^{n+1} + \sum_{\substack{n \le 0 \\ \rho < |q|^m \le \rho^2}} |f^{(m)}_{n}| \lambda^m \rho^n.
		\]
	\end{proposition}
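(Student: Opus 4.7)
The plan is to combine Corollary \ref{secondisom}, the kernel characterization $\overline{I} = \bigcap_{m, n} \ker(\varphi_{m, n})$ from the preceding Proposition, and the closed-form expression for the quotient seminorm $\|\cdot\|^\vee_{\lambda, \rho}$ established in the preceding Lemma. By Corollary \ref{secondisom} we have $\widehat{L}_A(A_{\beta_q}) \simeq A\{x_1, x_2; \beta_q\} / \overline{I}$ with $I = (x_1 x_2 - 1, x_2 x_1 - 1)$. Since the coset $[f]$ is determined completely by the scalars $f^{(m)}_n = \varphi_{m, n}(f)$, it is natural to define the candidate isomorphism
\[
\Phi : A\{x_1, x_2; \beta_q\}/\overline{I} \longrightarrow \biggl\{ \sum_{m \ge 0,\, n \in \mathbb{Z}} f^{(m)}_n z^m x^n : \|\cdot\|'_{\lambda, \rho} < \infty \text{ for all } \lambda, \rho > 0 \biggr\}, \quad [f] \longmapsto \sum_{m, n} f^{(m)}_n z^m x^n,
\]
and to verify that $\Phi$ is a topological isomorphism.

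Well-definedness and injectivity of $\Phi$ are immediate from the kernel description. For surjectivity, I would lift a given formal series $g = \sum g^{(m)}_n z^m x^n$ in the target space back to $A\{x_1, x_2; \beta_q\}$ by interpreting $x^n$ as $x_1^n$ when $n \ge 0$ and as $x_2^{-n}$ when $n < 0$. The finiteness of $\|g\|'_{\lambda, \rho}$, combined with the explicit form of $\|\cdot\|_{\lambda, \rho}$ on $A\{x_1, x_2; \beta_q\}$ computed in Example \ref{example3}, guarantees that the lift actually sits inside $A\{x_1, x_2; \beta_q\}$, and applying $\Phi$ recovers $g$.

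The key computational step is to identify $\|\Phi([f])\|'_{\lambda, \rho}$ with $\|[f]\|^\vee_{\lambda, \rho}$. Starting from the preceding Lemma, which expresses $\|f\|^\vee_{\lambda, \rho}$ as the $\|\cdot\|_{\lambda, \rho}$-norm of an explicit series in three disjoint families of monomials in $W_2$, one observes that these families intersect trivially, so the norm splits as a sum of contributions of the form $|f^{(m)}_n| \lambda^m |q|^{-m k_{\max}(w)} \rho^{|w|}$. Plugging in the values of $k_{\max}$ for the words $x_1^n$, $x_2^{-n}$, and $x_1^n x_2$ in the relevant index ranges recovers exactly the three summands in $\|\cdot\|'_{\lambda, \rho}$. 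The main obstacle is the careful bookkeeping across the three regimes $|q|^m \le \rho$, $\rho < |q|^m \le \rho^2$, and $|q|^m > \rho^2$ (the last contributing nothing), together with the boundary cases $|w| \le 1$ where $k_{\max}(w)$ is declared to be $0$ rather than its generic value. Once these cases are reconciled, $\Phi$ is an isometric isomorphism of locally convex spaces, and transporting the quotient multiplication through $\Phi$ equips the target with the claimed Arens-Michael algebra structure.
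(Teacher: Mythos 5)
Your overall strategy is the right reconstruction of what the paper leaves implicit: the Proposition is meant to follow from Corollary \ref{secondisom}, the identification $\overline{I} = \bigcap_{m,n}\operatorname{Ker}(\varphi_{m,n})$, and the closed-form expression for $\|\cdot\|^{\vee}_{\lambda,\rho}$ in the preceding Lemma. Well-definedness and injectivity of $\Phi$, and the bookkeeping that turns the Lemma's formula into the three summands of $\|\cdot\|'_{\lambda,\rho}$, are exactly as you describe (modulo the paper's own notational slips about $k_{\max}$ and $\rho^{n}$ versus $\rho^{|n|}$ for $n<0$).

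The genuine gap is in your surjectivity step. It is \emph{not} true that finiteness of $\|g\|'_{\lambda,\rho}$ for all $\lambda,\rho$ forces the naive lift $\tilde g=\sum_{m,n} g^{(m)}_n z^m x^n$ (supported on the reduced words $x_1^n$ and $x_2^{-n}$) to lie in $A\{x_1,x_2;\beta_q\}$. The point is that every summand of $\|\cdot\|'_{\lambda,\rho}$ carries the constraint $|q|^m\le\rho^2$, so for each fixed $\rho$ only finitely many $m$ contribute; membership in the target space therefore imposes essentially no growth condition in $m$. Concretely, take $g^{(m)}_0=m!$ and $g^{(m)}_n=0$ for $n\ne 0$: then $\|g\|'_{\lambda,\rho}=\sum_{|q|^m\le\rho^2} m!\,\lambda^m(1+\rho^2+1)$ is a finite sum for every $\lambda,\rho$, so $g$ lies in the target, whereas $\|\tilde g\|_{\lambda,1}=\sum_m m!\,\lambda^m=\infty$, so $\tilde g\notin A\{x_1,x_2;\beta_q\}$. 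This $g$ \emph{is} in the image, but only via non-reduced representatives: e.g. $f=\sum_m m!\,z^m x_1^{k_m}x_2^{k_m}$ with $k_m$ growing roughly linearly in $m$ lies in $A\{x_1,x_2;\beta_q\}$ because the factor $|q|^{-mk_{\max}(x_1^{k_m}x_2^{k_m})}=|q|^{-mk_m}$ crushes $m!$, and $\varphi_{m,0}(f)=m!$. So either repair the lift along these lines, or argue more structurally: by the Lemma, $\Phi$ is a topological embedding of the Fr\'echet (hence complete) quotient $A\{x_1,x_2;\beta_q\}/\overline{I}$ whose image contains all finite sums $\sum g^{(m)}_n z^m x^n$; these are dense in the target space, and the image is complete, hence closed, hence equals the whole target. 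Either route closes the gap; as written, your proof does not.
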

\end{example}

\printbibliography
\Addresses
\end{document}